\numberwithin{equation}{section}
\theoremstyle{plain}
\newtheorem{thm}{Theorem}[section]
\newtheorem{lem}[thm]{Lemma}
\newtheorem{prop}[thm]{Proposition}
\newtheorem{cor}[thm]{Corollary}
\newtheorem{conj}[thm]{Conjecture}
\newtheorem{ex}[thm]{Example}
\theoremstyle{definition}
\newcommand{\fmod}[1]{\ (\mathrm{mod}\ #1)}
\def\qed{\hfill \rule{4pt}{7pt}}
\def \dps {\displaystyle}
\begin{document}
	\begin{center}
		{\Large \bf {Proof of Merca's stronger conjecture on truncated Jacobi triple product series}}
		\vskip 6mm
		{Xiangyu Ding$^a$,  Lisa Hui Sun$^b$
			\\[2mm]
			{\small Center for Combinatorics, LPMC, Nankai University, Tianjin 300071, P.R. China} \\[2mm]
			$^a$dingmath@mail.nankai.edu.cn, $^b$sunhui@nankai.edu.cn \\[2mm]}
	\end{center}
	
	\allowdisplaybreaks
	
	{\noindent \bf Abstract.}  In the study of theta series and partition functions, Andrews and Merca, Guo and Zeng  independently conjectured that  a truncated Jacobi triple product series  has nonnegative coefficients. This conjecture was proved analytically by Mao and combinatorially by Yee. In 2021, Merca proposed a stronger version of the conjecture, that  is, for positive integers $1\leq S<R$ with  $k\geq 1$, the coefficient of $q^n$ in
	the theta  series
	\[
	\frac{(-1)^{k} \sum_{j=k}^{\infty}(-1)^j q^{R j(j+1) / 2}\left(q^{-Sj}-q^{( j+1) S}\right)}{\left(q^S, q^{R-S}; q^R\right)_{\infty}}
	\]
	is nonnegative. Recently,  some very special cases of this conjecture have been proved and studied. For any given $R, S$ and $k$,  we take $s=S/(S,R), r=R/(S,R)$ which are coprime, equivalently. In this paper, we confirm Merca's stronger conjecture for sufficiently large $n$. Furthermore, for given $r, s$ and $k$, we provide a systematic method to determine an integer $N(r, s, k)$  such that Merca's stronger conjecture holds for $ n\geq N(r,s,k) $. More precisely, we decompose the infinite product in the denominator of the above theta series into two parts, one of which can be interpreted as the generating function of partitions with certain restricted parts and the other is a nonmodular infinite product.
	We derive the general upper and lower bounds for the coefficients of these two parts by using the partition theoretical method and the circle method, respectively. Further multiplying the partition part by the numerator of the theta series and considering the convolution with the nonmodular infinite product, we obtain the constant $N(r,s,k)$ and confirm Merca's stronger conjecture when $n\geq N(r,s,k)$. Moreover, we also show that when $k$ is sufficiently large, this conjecture holds directly for any $n\geq 0$.
	
	{\noindent \bf Keywords:}  truncated theta series; partition function;  Jacobi triple product series; nonmodular infinite product; circle method
	
	{\noindent \bf AMS Classification:} 05A17, 33D15, 11P55, 11P82
	
	\allowdisplaybreaks
	
	\section{Introduction}
	
	In 2012, Andrews and Merca investigated a truncated form of Euler's pentagonal number theorem and obtained the following result \cite[Lemma 1.2]{andrews2012truncated}:
	\begin{align}\label{trun-euler}
		\frac{	(-1)^{k-1} }{(q ; q)_{\infty}} \sum_{j=-k}^{k-1}(-1)^j q^{j(3 j+1) / 2}=	(-1)^{k-1} + \sum_{n=1}^{\infty} \frac{q^{\binom{k}{2}+(k+1) n}}{(q ; q)_n}  {n-1\brack k-1}.
	\end{align}
	The truncated sum on the left hand side can be written in the form of
	\[
	\sum_{j=-k-\tau}^{k} (-1)^{j}q^{aj^{2}+bj}
	\]
	by replacing $k$ with $k+1$, where $a\in \mathbb{Q}^{+}, b\in \mathbb{Q} $ and $ \tau\in\{0,1\} $. Such truncated sums can be traced back to Shanks' version \cite[(2)]{shanks1951short} of  Euler's pentagonal number theorem,
	\[
	\sum_{j=-k}^{k}(-1)^j q^{j(3 j+1) / 2}=\sum_{n=0}^{k} \frac{(-1)^{n}(q ; q)_kq^{\binom{n}{2}+(k+1) n}}{(q ; q)_n}.
	\]
	Warnaar also studied the case of $ \sum_{j=-k-\tau}^{k}(-1)^j q^{j(5 j+c) / 2}$ in \cite[Theorem 1.1]{Warnaar2002} with $ c\in\{1,3\} $, for which Chapman \cite{Chapman2002} provided a combinatorial proof.
	
	One can see that the coefficient of $ q^{n} $ on the right hand side of \eqref{trun-euler}  is nonnegative, for which Andrews and Merca also gave a corresponding partition interpretation, which also leads to
	\begin{align*}
		(-1)^{k-1} \sum_{j=0}^{k-1}(-1)^j \big(p(n-j(3 j+1) / 2)-p(n-j(3 j+5) / 2-1)\big)\geq 0.
	\end{align*}	
	Later, Guo and Zeng studied a class of truncated forms of Gauss' triangular and 	square numbers series in \cite{GuoZeng13}, from which they also derive the recurrence relations for partition functions. 	
	Since then the truncated forms for theta series have attracted more and more attentions.
	
	In 2018,  Andrews and Merca  \cite{andrews2018truncated} obtained  the  unified truncated forms for Euler's pentagonal number theorem, Gauss's theta series for triangular and square numbers, which are further generalized by  Xia, Yee and Zhao in 2022 \cite{xia2022new}. We can easily notice that these  three classical theta series  all arise from  the Jacobi  triple product series. Actually, Andrews and Merca   \cite{andrews2012truncated}, and Guo and Zeng  \cite{GuoZeng13}  proposed the following conjecture for a truncated Jacobi triple product series,  independently.
	
	\begin{conj} \label{A-M-conj}
		For positive integers $k, R, S$ with  $1 \leq S<R / 2$, the coefficient of $q^n$ with $n \geq 1$ in
		\begin{align}\label{conj-weak}
			(-1)^{k-1} \frac{\sum_{j=0}^{k-1}(-1)^j q^{R j(j+1) / 2-S j}\left(1-q^{(2 j+1) S}\right)}{\left(q^S, q^{R-S}, q^R ; q^R\right)_{\infty}}
		\end{align}
		is nonnegative.
	\end{conj}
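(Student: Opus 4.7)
The plan is to reduce the conjecture to an explicit positive expansion of a tail Jacobi theta ratio, in the spirit of the Andrews--Merca identity \eqref{trun-euler}. First I would invoke the Jacobi triple product identity with base $q^R$ and $z = q^S$ to expand the denominator as the two-sided theta series $\sum_{j\in\mathbb{Z}}(-1)^j q^{Rj(j+1)/2 - Sj}$, and then pair the terms indexed by $j \geq 0$ and $-j-1$ to obtain
\[
(q^S, q^{R-S}, q^R; q^R)_\infty = \sum_{j=0}^{\infty}(-1)^j q^{Rj(j+1)/2 - Sj}\bigl(1 - q^{(2j+1)S}\bigr).
\]
Consequently, the ratio in \eqref{conj-weak} equals $(-1)^{k-1}$ minus $(-1)^{k-1}$ times the ratio of the tail $\sum_{j \geq k}$ to the full product. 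After cancelling the constant and reindexing $j = k+i$, the conjecture reduces to showing that
\[
[q^n]\;\frac{\sum_{i\geq 0}(-1)^i q^{R(k+i)(k+i+1)/2 - S(k+i)}\bigl(1 - q^{(2(k+i)+1)S}\bigr)}{(q^S, q^{R-S}, q^R; q^R)_\infty} \geq 0 \qquad \text{for all } n \geq 1.
\]
The hypothesis $S < R/2$ guarantees that the exponents of the leading summands strictly increase with $i$, so the numerator is a lacunary alternating series whose leading term $q^{Rk(k+1)/2 - Sk}$ is positive.

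The core step is to produce an explicit positive expansion of this tail ratio. I would try to construct a Bailey pair $(\alpha_n, \beta_n)$ relative to the base $q^R$ whose associated theta summation matches the numerator above, and then apply Bailey's lemma to convert the quotient into a single series of the form
\[
\sum_{n\geq 1}\frac{q^{A(k) + B(k)\,n}}{(q^R; q^R)_n}\, G_{n,k}(q),
\]
in which $G_{n,k}(q)$ is a product of Gaussian polynomials in base $q^R$ with nonnegative coefficients, playing the role of the term ${n-1 \brack k-1}$ on the right of \eqref{trun-euler}. A parallel combinatorial route, mirroring Yee's strategy, is to interpret each side as a signed count of partitions with parts in the residues $0$ and $\pm S \pmod{R}$ and to build a sign-reversing involution pairing every negatively signed partition with a positive one, leaving a residual set whose cardinality realises the explicit positive expansion.

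The main obstacle is identifying the correct Bailey pair (or, combinatorially, the right invariant under the involution) for the two-parameter family $(R, S)$. The classical cases such as $R = 3, S = 1$ (pentagonal) and $R = 4, S = 1$ (Gauss) admit canonical Bailey pairs, but for general coprime $R$ and $S$ one must choose from a wider family and then verify both the coefficient-wise positivity of $G_{n,k}$ and the boundary matching that produces the constant $(-1)^{k-1}$ on the tail-free side. A further subtlety is that the hypothesis $S < R/2$ is used precisely to keep the leading exponents monotone; relaxing it (as Merca's stronger conjecture envisages) breaks this monotonicity and is expected to require the hybrid partition--circle-method strategy outlined in the abstract.
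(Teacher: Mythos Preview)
The paper does not contain its own proof of Conjecture~\ref{A-M-conj}; it is quoted as a result already established by Mao~\cite{mao2015proofs} analytically and by Yee~\cite{yee2015truncated} combinatorially (and reproved by Wang and Yee~\cite{wang2019truncated} via Bailey pairs). The paper's actual work concerns the stronger Conjecture~\ref{stronger-ja-1}, and even there the method is entirely different from what you sketch: a decomposition of the infinite product into a finite partition part and a nonmodular tail, followed by explicit polynomial bounds and circle-method asymptotics.

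Your proposal is not a proof but a plan, and you say so yourself: ``I would try to construct a Bailey pair\ldots'' and ``The main obstacle is identifying the correct Bailey pair.'' That is the entire content of the problem. The reduction you carry out in the first paragraph---rewriting the truncated sum as $(-1)^{k-1}$ minus a tail over the full product---is correct and standard, but it does not advance toward the conclusion; it simply restates Conjecture~\ref{A-M-conj} as the nonnegativity of the tail ratio, which is exactly Merca's stronger formulation (up to the extra factor $(q^R;q^R)_\infty$ in the denominator). The substantive work in Mao's proof is a careful $q$-series manipulation producing an explicit nonnegative expansion, and in Yee's proof it is the construction of a specific injection on overpartitions with restricted residues; neither falls out of a generic Bailey-lemma invocation, and the Wang--Yee argument relies on Liu's expansion formula rather than a bare Bailey pair. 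Your outline names the right neighbourhoods but does not locate the key idea in any of them, so as written it has a genuine gap: the step ``construct a Bailey pair whose theta summation matches the numerator and yields a manifestly positive $G_{n,k}$'' is asserted rather than done.
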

	
	The conjecture was proved analytically by Mao \cite{mao2015proofs} and combinatorially by Yee \cite{yee2015truncated} in 2015.  By applying the theory of Bailey pairs and Liu's expansion formula  \cite{Liuexpan} for $q$-series, Wang and Yee \cite{wang2019truncated} reproved this conjecture.
	Recently, by utilizing the iteration of Bailey's methods, we \cite{DingLisa}  obtained a general expression  of \eqref{conj-weak} in the form of multiple sums and the results can reduce to Andrews-Gordon and Bressoud type identities.
	
	In 2021, Merca \cite{Merca19, merca2022two} proposed a  stronger version of Conjecture \ref{A-M-conj} as follows.
	\begin{conj}\label{stronger-ja-1}
		For positive integers $1\leq S< R$ with $k \geq 1$, the theta series
		\begin{align}\label{conj-strong-1}
			(-1)^{k} \frac{\sum_{j=k}^{\infty}(-1)^j q^{R j(j+1) / 2}\left(q^{-Sj}-q^{( j+1) S}\right)}{\left(q^S, q^{R-S}; q^R\right)_{\infty}}
		\end{align}
		has nonnegative coefficients.
	\end{conj}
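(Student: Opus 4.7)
The plan is to split the proof into three regimes that together cover every quadruple $(R, S, k, n)$ with $1 \le S < R$, $k \ge 1$ and $n \ge 0$. First, I would reformulate the series using the Jacobi triple product identity, which gives
$\sum_{j \in \mathbb Z}(-1)^j q^{Rj(j+1)/2 - Sj} = (q^S, q^{R-S}, q^R; q^R)_\infty$.
Since $q^{-Sj} - q^{(j+1)S} = q^{-Sj}(1 - q^{(2j+1)S})$, the numerator in \eqref{conj-strong-1} equals $(-1)^k(q^S, q^{R-S}, q^R; q^R)_\infty$ minus (up to sign) the weak-conjecture truncation $\sum_{j=0}^{k-1}(-1)^j q^{Rj(j+1)/2 - Sj}(1 - q^{(2j+1)S})$. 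Dividing by $(q^S, q^{R-S}; q^R)_\infty$ rewrites the target series as $(-1)^k(q^R; q^R)_\infty$ plus $(q^R; q^R)_\infty$ times the Andrews--Merca--Guo--Zeng series of \eqref{conj-weak}, which is already known to be nonnegative by Mao and Yee. The remaining task is to show that the $(-1)^k(q^R; q^R)_\infty$ correction never flips the sign of the total.

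Second, I would execute the circle-method program outlined in the abstract on the large-$n$ regime. Setting $s = S/(R,S)$ and $r = R/(R,S)$, decompose $(q^S, q^{R-S}; q^R)_\infty = A(q) B(q)$, where $1/A(q)$ is the generating function for partitions with parts in a prescribed arithmetic progression (a modular piece) and $B(q)$ is a nonmodular infinite product. Bound the coefficients of the power series $N_k(q)/A(q)$ from above and below by elementary partition estimates, and bound the coefficients of $1/B(q)$ asymptotically by the Hardy--Ramanujan circle method. Convolving the two and comparing the dominant term against the error produces an explicit threshold $N(r, s, k)$ beyond which the positive contribution overwhelms the signed correction, establishing nonnegativity for $n \ge N(r, s, k)$.

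Third, I would close the finite residual window using two observations. The smallest exponent in $\sum_{j \ge k}(-1)^j q^{Rj(j+1)/2}(q^{-Sj} - q^{(j+1)S})$ is $Rk(k+1)/2 - Sk$, so once it exceeds $n$ the target coefficient vanishes identically and the conjecture holds trivially; hence for each fixed $n$ only finitely many $k$ can be problematic, and equivalently for sufficiently large $k$ the conjecture holds at every $n$. For the remaining pairs $(k, n)$ with $n < N(r,s,k)$, note that $[q^n]$ of the target series depends only on those factors of $(q^S, q^{R-S}; q^R)_\infty$ whose exponent is $\le n$ and on the terms of the numerator with exponent $\le n$, so the coefficient is determined by the values of $(R, S)$ modulo explicit bounds in $n$. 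This leaves a genuinely finite family of quadruples, which can be verified directly, and sharpening the estimates of the second step keeps the verification tractable.

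The main obstacle will be uniformly closing the finite window. The circle-method threshold $N(r, s, k)$ typically grows with the parameters, and a bare exhaustive check is not uniform in $(r, s)$. The cleanest resolution, which I would pursue in parallel, is to exploit the Bailey-pair machinery of Wang--Yee together with the multisum identities in \cite{DingLisa} to derive a manifestly nonnegative $q$-series expansion for $[q^n]$ of \eqref{conj-strong-1} valid for all $n$ and every $k \ge 1$; such an identity would absorb both the analytic estimates and the finite verification into a single positivity statement and resolve the conjecture exactly as stated.
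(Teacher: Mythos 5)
The statement you are proving is presented in the paper as a conjecture, and the paper itself does not prove it in full: its main results establish nonnegativity only for $n\geq N(r,s,k)$ (Theorem \ref{thm-large-n}) and, separately, for all $n\geq 0$ once $k$ exceeds an explicit bound depending on $r$ and $s$ (Corollaries \ref{cor-main} and \ref{cor-3}). Your second step --- splitting $(q^S,q^{R-S};q^R)_\infty$ into a finite ``partition'' piece and a nonmodular infinite product, bounding the two coefficient sequences by elementary partition estimates and by the circle method respectively, and convolving --- is exactly the paper's strategy for the large-$n$ regime, so that portion of your plan is sound and matches the paper.

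The gap lies in your first and third steps. The Jacobi-triple-product reformulation is correct algebra (the sum over $j\geq 0$ of the summand is the full bilateral theta, so the tail equals $(q^S,q^{R-S},q^R;q^R)_\infty$ minus the Andrews--Merca truncation), but it buys nothing for positivity: after dividing, your target becomes $(-1)^k(q^R;q^R)_\infty$ plus $(q^R;q^R)_\infty$ times the nonnegative series of \eqref{conj-weak}, and $(q^R;q^R)_\infty$ has coefficients of both signs, so neither term is nonnegative and their sum is not obviously so. More seriously, the ``finite residual window'' in your third step is not finite: for each fixed $(R,S,k)$ there are finitely many exceptional $n<N(r,s,k)$, but $N(r,s,k)$ grows rapidly with $r$ and $s$ (in the paper it is at least of order $r^{9}$), so the union over all admissible $(R,S,k)$ of these exceptional ranges is infinite and cannot be closed by direct verification; the reduction ``modulo explicit bounds in $n$'' does not help because the bound itself grows with the modulus. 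Your observation that for fixed $n$ only finitely many $k$ contribute merely shows the coefficient vanishes for large $k$ at that particular $n$; it does not yield the uniform-in-$n$ statement for large $k$, which the paper obtains only through the separate monotonicity analysis of Theorems \ref{main-thm}--\ref{subthm-2nmids}. The Bailey-pair fallback you mention is speculative and not carried out. As written, your proposal, like the paper, would prove the conjecture only for sufficiently large $n$ (and for sufficiently large $k$), not in full.
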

	Ballantine and Feigon \cite{ballantine2024truncated} proved combinatorially for the cases when $k\in\{1, 2, 3\} $.
	We also confirmed the cases when $ R=3S$ for $S\geq 1$ in \cite{DingLisa} by using the partition theoretical method. For the related works, see also Yao \cite{Yao6regular}, Ballantine and Feigon  \cite{ballantine2024truncated}, and Zhou \cite{Zhou}.

	Notice that  $S $ and $ R-S  $ are symmetric in our method. So without losing of generality, by replacing $ q^{(R,S)} $ with $ q $, we can assume that $1\leq S< R/2$ and $ R,\ S $ are coprime. In addition, the case of $R=2S$ is simpler and it will be proved as Corollary \ref{cor-r=2} in Section \ref{non-coprime}.  Then   Conjecture \ref{stronger-ja-1} can be restated equivalently as follows.
	\begin{conj}\label{stronger-ja-2}
		For positive integers $1\leq s< r/2$, $ (r,s)=1 $  and $k \geq 1$, the coefficient of $ q^{n} $ in
		\begin{align}\label{conj-strong-2}
			(-1)^{k} \frac{\sum_{j=k}^{\infty}(-1)^j q^{r j(j+1) / 2}\left(q^{-sj}-q^{( j+1) s}\right)}{\left(q^s, q^{r-s}; q^r\right)_{\infty}}
		\end{align}
		is nonnegative.
	\end{conj}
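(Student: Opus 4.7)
The plan is to attack Conjecture~\ref{stronger-ja-2} by reducing it, for each triple $(r,s,k)$, to nonnegativity in two regimes: an asymptotic regime $n \geq N(r,s,k)$ handled analytically, and a complementary finite regime $n < N(r,s,k)$ handled by direct computation. The starting point is to split the denominator as a product of a ``partition part'', namely $1/(q^s,q^{r-s};q^r)_\infty = \sum_n p_{r,s}(n)\,q^n$ where $p_{r,s}(n)$ enumerates partitions into parts $\equiv \pm s \pmod r$, and a ``nonmodular part'' obtained by peeling off finitely many small-modulus factors. The coefficient of $q^n$ in \eqref{conj-strong-2} then becomes a convolution of signed shifts of $p_{r,s}$ against the coefficient sequence of the nonmodular factor, with the signed shifts coming from the truncation terms indexed by $j \geq k$.

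For the large-$n$ regime, I would apply the Hardy--Ramanujan--Rademacher circle method (or a Meinardus-type theorem adapted to the residue classes $\pm s \pmod r$) to derive explicit upper and lower bounds for $p_{r,s}(n)$ of the form $C_{\pm}(r,s)\, n^{-\alpha}\, e^{\beta(r,s)\sqrt{n}}\,(1+O(n^{-1/2}))$; bound the coefficients of the nonmodular factor in a compatible way; and then combine these estimates so that the signed convolution defining $[q^n]$ is dominated by its positive leading terms. Running the same estimate in the parameter $k$, one expects the smallest exponent $rk(k+1)/2 - sk$ in the numerator tail to be so large, once $k$ is large, that the positive contributions automatically swamp the negative ones for \emph{every} $n \geq 0$; this yields a uniform threshold $K(r,s)$ past which the conjecture holds with no restriction on $n$, and so collapses the remaining task, for each fixed $(r,s)$, to finitely many values of $k$.

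For the residual finite regime, Conjecture~\ref{stronger-ja-2} reduces to checking nonnegativity of finitely many explicit coefficients for $n < N(r,s,k)$ with $k < K(r,s)$. The main obstacle will be making the thresholds $N(r,s,k)$ and $K(r,s)$ both sharp and explicit enough that this residual verification is genuinely finite and tractable: the circle-method error terms depend delicately on the arithmetic of $r,s$ through Kloosterman-type sums and Dirichlet characters cutting out the residue classes, and the convolution analysis must track the cancellation structure of the numerator tail precisely, since the alternating signs $(-1)^j$ together with the two shifts $-sj$ versus $(j+1)s$ produce near-cancellations that must be shown to be genuinely positive after collection. A clean decoupling of the partition factor from the nonmodular factor, combined with exploiting the quadratic growth of the exponents $rj(j+1)/2 \pm sj$ to force rapid decay of the tail, should control the error; converting this into a closed-form $N(r,s,k)$ small enough to permit explicit enumeration of the leftover cases is where most of the real work lies, and is the step I expect to be the decisive difficulty in promoting an ``eventually nonnegative'' statement into the full conjecture.
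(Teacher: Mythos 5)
Your plan coincides at the strategic level with what the paper actually does: decompose the denominator, get two\-/sided bounds for one factor, apply the circle method to a nonmodular infinite product, run a convolution argument to produce an explicit threshold $N(r,s,k)$, observe that large $k$ makes the bound vacuous, and leave a finite residual check. Be aware, however, that this strategy (in the paper as well) proves only Theorem \ref{thm-large-n}, i.e.\ nonnegativity for $n\geq N(r,s,k)$, plus the large-$k$ corollaries; it does not prove Conjecture \ref{stronger-ja-2} itself, because the residual range is not actually verifiable in practice (the worked example in Section \ref{pf-large-n} yields $N_1\geq 3.29\times 10^{17}$ for $(r,s,k)=(12,1,1)$, and the paper concedes that expanding even $10^4$--$10^6$ coefficients takes days). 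So your final step ``check finitely many coefficients'' is finite in principle but does not close the argument.

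The concrete gap in your analytic step is the treatment of cancellation. You propose to bound $p_{r,s}(n)$ by $C_{\pm}(r,s)\,n^{-\alpha}e^{\beta\sqrt{n}}(1+O(n^{-1/2}))$ and then show the signed convolution ``is dominated by its positive leading terms.'' This cannot work as stated: the alternating sum $\sum_{j}(-1)^j\bigl(p_{r,s}(n-rj(j+1)/2+sj)-p_{r,s}(n-rj(j+1)/2-(j+1)s)\bigr)$ cancels down to relative size $O(n^{-1/2})$ of a single term, which is exactly the size of the error you allow, so two-sided bounds of that quality can never certify a sign. The paper's decomposition is arranged precisely to avoid this: it peels off a \emph{finite} product $1/\bigl((q^s;q^r)_{\alpha}(q^{r-s};q^r)_{\beta}\bigr)$ whose exponents $s,\,r-s,\,r+s,\,2r-s$ are pairwise coprime, so that by Polya--Szeg\H{o} (Lemma \ref{lem-coprime}) its coefficients equal an explicit cubic polynomial plus a \emph{bounded} periodic term; the entire cancellation analysis against the numerator is then exact polynomial algebra over the intervals $[t_{i,p},t_{i+1,p})$ (Section \ref{coprime}), and the circle method is applied only to the leftover nonmodular product $G_{s,r}(q)$, whose coefficients are \emph{nonnegative}, so crude multiplicative bounds ($0.99$ to $1.01$) suffice and no cancellation needs to be tracked analytically. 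Your description inverts this (you call all of $1/(q^s,q^{r-s};q^r)_{\infty}$ the partition part and peel the nonmodular factor off of it), and without the coprimality-driven exact quasi-polynomial for the finite part the near-cancellations you correctly worry about cannot be resolved. A further minor correction: no Kloosterman sums or Dirichlet characters enter; the analytic input is Chern's Mellin-transform/Hurwitz-zeta treatment of nonmodular products, since $1/(q^s,q^{r-s};q^r)_{\infty}$ is not modular and Rademacher-type machinery does not apply to it directly.
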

	
	In this paper, we are mainly devoted to determine a nonnegative integer constant $ N (r,s,k) $ respected to $r, s, k$ such that for such given $r, s$ and $k$, Merca's stronger conjecture holds for $ n \geq N (r,s,k) $.
	
	\begin{thm}\label{thm-large-n}
		For positive coprime integers $r$ and $s$ with  $1\leq s< r/2$  and $k \geq 1$, there exists a  constant $N (r,s,k)$ relative to $ r,s$ and $k $, such that  when $n\geq N (r,s,k)$,   the coefficient of $ q^{n} $ in
		\begin{align}\label{TJTTS_Merca}
			(-1)^{k} \frac{\sum_{j=k}^{\infty}(-1)^j q^{r j(j+1) / 2}\left(q^{-sj}-q^{( j+1) s}\right)}{\left(q^s, q^{r-s}; q^r\right)_{\infty}}
		\end{align}
		is nonnegative.
	\end{thm}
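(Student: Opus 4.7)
The plan is to factor the generating function $\Theta(q)$ in \eqref{TJTTS_Merca} as a product of two pieces whose coefficient behavior can be controlled separately, and then read off the nonnegativity of $[q^n]\Theta(q)$ for large $n$ from a convolution estimate. Specifically, I would write
\[
\frac{1}{(q^s,q^{r-s};q^r)_\infty} \;=\; P(q)\,M(q),
\]
where $P(q)$ is an explicit infinite product chosen so that its coefficients are nonnegative and admit a clean partition-theoretic interpretation (partitions whose parts lie in a prescribed union of residue classes modulo $r$), while $M(q)$ is the remaining nonmodular infinite product.

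The purpose of this splitting is that the product
\[
T(q) \;=\; (-1)^k P(q) \sum_{j=k}^{\infty}(-1)^j q^{rj(j+1)/2}\bigl(q^{-sj}-q^{(j+1)s}\bigr)
\]
should admit a combinatorial treatment: distribute the signed theta factor over the partition generating function and pair positive and negative contributions via the kind of injection argument already used in \cite{DingLisa} for the case $R=3S$ and in Yee's proof \cite{yee2015truncated} of the weak conjecture. From such a pairing I expect to extract an explicit positive lower bound on $[q^n]T(q)$ valid once $n$ passes an effective threshold $n_0(r,s,k)$, together with a matching upper bound controlling the size of the (at most finitely many) coefficients that might still be negative.

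Next, I would apply the circle method to the nonmodular factor $M(q)$. Because $M(q)$ is not modular, a direct Hardy--Ramanujan treatment does not apply; instead one analyzes the singular behavior of $M(q)$ near each root of unity whose order divides the moduli appearing in its factors, identifies the dominant contribution coming from $q$ close to $1$, and estimates the minor-arc remainder. The output I aim for is an explicit two-sided bound
\[
A_n^{-} \;\le\; [q^n]M(q) \;\le\; A_n^{+}
\]
valid uniformly in $n$, with both $A_n^{\pm}$ given by closed-form expressions in $r,s$ and $n$.

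Finally, the coefficient of interest is realized as the convolution
\[
[q^n]\Theta(q) \;=\; \sum_{m=0}^{n}[q^m]T(q)\cdot[q^{n-m}]M(q),
\]
which I would split into a main range where the positive coefficients of $T(q)$ from the second step dominate, and a boundary range where the upper bound $A_{n-m}^{+}$ controls any residual negative contribution from $[q^m]T(q)$. Balancing these two parts produces the explicit threshold $N(r,s,k)$. I expect the main obstacle to be the circle-method step for $M(q)$: one needs sharp enough \emph{explicit} constants (not merely asymptotic estimates) so that the bounds on $[q^n]M(q)$ can be compared term by term against the partition-theoretic lower bound on $[q^n]T(q)$, and it is precisely this comparison that ultimately dictates the size of $N(r,s,k)$.
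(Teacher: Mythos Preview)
Your overall architecture---split the denominator, handle one factor with partition theory against the signed numerator, handle the other factor with the circle method, then convolve---is exactly what the paper does. The circle-method and convolution portions of your plan match Sections~\ref{Nonmodular} and~\ref{pf-large-n} closely.

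The partition-theoretic step, however, differs from the paper in a way that leaves a gap in your plan. First, the paper does \emph{not} take $P(q)$ to be an infinite product over residue classes; it takes the \emph{finite} product
\[
P(q)=\frac{1}{(q^s;q^r)_\alpha(q^{r-s};q^r)_\beta},\qquad (\alpha,\beta)\in\{(2,2),(1,2),(2,1)\},
\]
so that only three or four fixed parts occur. This is the crucial choice: by the Polya--Szeg\H{o} lemma (Lemma~\ref{lem-coprime}) the associated partition function is then a polynomial in $n$ plus a bounded periodic term, and one obtains explicit two-sided polynomial bounds (Lemmas~\ref{limit-C(n)} and~\ref{lem-sub-2|s}). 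Substituting these into the signed sum from the numerator reduces the positivity of $[q^n]T(q)$ to checking explicit polynomial inequalities $C_{i,d}(n,p)>0$ (respectively $D_{i,d},E_{i,d}$) on intervals indexed by a truncation parameter $p$; this is what yields the effective thresholds and the linear lower bound $[q^n]T(q)\ge a_1 n + a_0$ needed in the convolution.

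Your proposal instead invokes injection arguments of the Yee and Ding--Sun type. Those arguments establish \emph{nonnegativity}, but they do not by themselves produce the quantitative growth lower bound that the final convolution estimate requires, and you have not indicated how you would extract one. Moreover, if $P(q)$ is genuinely an infinite product its coefficients grow subexponentially rather than polynomially, which would make both the injection analysis and the subsequent comparison with the circle-method bounds substantially harder to make effective. So the gap is specifically here: replace your unspecified injection step by the finite-product plus polynomial-bound mechanism the paper uses, and the rest of your outline goes through.
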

	Actually, we not only certify the existence of such constant $N $, but also provide a systematic method to calculate it precisely for given $r, s$ and $k$.
	
	To do so, we first decompose the denominator of the above theta series into two parts as follows,
	\begin{align}\label{two-part}
		\frac{1}{(q^s; q^r)_{\alpha}(q^{r-s}; q^r)_{\beta}}\cdot \frac{1}{\left(q^{\alpha r+s}, q^{(\beta+1)r-s}; q^r\right)_{\infty}},
	\end{align}
	where $ (\alpha,\beta)$ is chosen to be one of the elements in  $\{(2,2),(1,2),(2,1)\}  $ under different conditions. The denominator of the second part is a nonmodular infinite product.
	Firstly, by using the residue theorem  and the involutions between partitions,  we obtain the following theorem.
	\begin{thm}\label{main-thm}
		For positive integers $1\leq s< r/2$, $(s,r-s,r+s,2r-s)=1  $  and $k \geq 1$, the coefficient of $ q^{n} $ in
		\begin{equation}
			(-1)^{k} \frac{ \sum_{j=0}^{\infty}(-1)^{j}q^{r(j^{2}+(2k+1)j)/2-sj}(1-q^{2js+(2k+1)s})}{(1-q^{s})(1-q^{r-s})(1-q^{r+s})(1-q^{2r-s})}\label{mian-positive},
		\end{equation}
		is positive    when
		\begin{align}\label{n-Bound}
			n\geq\left(2 r^2 \sqrt[3]{{s}/{k}}+k\right) \left(4 r^3 \sqrt[3]{{s}/{k}}-r+2 s\right).
		\end{align}
	\end{thm}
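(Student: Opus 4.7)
The plan is to establish positivity by decomposing the coefficient of $q^n$ into an explicit ``main term'' coming from a partition-theoretic involution and a subleading error controlled via the residue theorem. Define $P(n):=[q^n]\prod_{a\in\{s,r-s,r+s,2r-s\}}(1-q^a)^{-1}$, the number of partitions of $n$ into parts from the four-element set $\{s,r-s,r+s,2r-s\}$, and set $e_j:=rj(j+2k+1)/2-sj$, $f_j:=e_j+(2j+2k+1)s$. Extracting the coefficient of $q^n$ from \eqref{mian-positive} yields
\[
[q^n]\eqref{mian-positive}\;=\;(-1)^k\sum_{j\ge 0}(-1)^j\bigl(P(n-e_j)-P(n-f_j)\bigr),
\]
a sum effectively truncated at $0\le j\le J_n$ with $J_n\asymp\sqrt{n/r}$. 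The simple involution removing an isolated part of size $s$, captured by $P(n)-P(n-s)=P_3(n)$ where $P_3(m)$ counts partitions of $m$ into parts from the three-element set $\{r-s,r+s,2r-s\}$, absorbs the $(1-q^s)^{-1}$ factor, and iterating $(2j+2k+1)$ times gives
\[
P(n-e_j)-P(n-f_j)\;=\;\sum_{i=0}^{2j+2k}P_3(n-e_j-is).
\]

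Next, I would apply the residue theorem (partial fractions) to $[(1-q^{r-s})(1-q^{r+s})(1-q^{2r-s})]^{-1}$. The coprimality hypothesis $(s,r-s,r+s,2r-s)=1$ is in fact equivalent to $\gcd(r,s)=1$; it ensures that the only pole of maximal order $3$ sits at $q=1$, and partial-fraction decomposition then furnishes
\[
P_3(m)\;=\;V(m)+\eta(m),
\]
with $V(m)$ an explicit degree-$2$ quasi-polynomial whose leading term equals $m^2/[2(r-s)(r+s)(2r-s)]$, and $\eta(m)$ a bounded periodic correction whose sup-norm $C(r,s)$ is computed from the residues at roots of unity $\ne 1$.

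Combining, the target coefficient splits into a $V$-contribution and an $\eta$-contribution. For the $V$-part I would perform the inner sum over $i$ in closed form using Faulhaber's formula, obtaining a polynomial $F(j)$ in $j$ of degree $5$, and then evaluate the outer alternating sum $\sum_{j=0}^{J_n}(-1)^j F(j)$ via finite-difference calculus (pairing $(j,j+1)$ and collecting boundary contributions near $j\approx J_n$); combined with $(-1)^k$ this yields a strictly positive leading contribution of order $n^2$ whose coefficient matches the $r,s,k$-dependence in \eqref{n-Bound}. For the $\eta$-part the straightforward bound
\[
\Bigl|\sum_{j=0}^{J_n}(-1)^j\sum_{i=0}^{2j+2k}\eta(n-e_j-is)\Bigr|\;\le\;\sum_{j=0}^{J_n}(2j+2k+1)\,C(r,s)\;=\;O\!\bigl(nC(r,s)/r\bigr)
\]
costs only a single factor of $n$ relative to the main term. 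Requiring the $V$-term to strictly exceed this $\eta$-error then pins down $n\ge N(r,s,k)$ of the precise form \eqref{n-Bound}.

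The main obstacle is evaluating the polynomial alternating sum of step three with sharp enough constants: $\sum_j(-1)^j F(j)$ suffers substantial internal cancellation, and its leading $n^2$-coefficient depends sensitively on the parity of $J_n$ and on boundary contributions near $j\approx J_n$, where the smooth polynomial extension $V(m)$ disagrees with $P_3(m)=0$ for $m<0$. Properly treating these boundary terms, exploiting the sign $(-1)^k$ together with the hypothesis $1\le s<r/2$, and producing a clean closed-form lower bound that matches \eqref{n-Bound} exactly will constitute the technical core of the proof.
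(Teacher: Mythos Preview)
Your broad strategy---replace the partition count by a polynomial main term plus a bounded periodic correction, then show the polynomial part dominates---is the paper's strategy too, but the organization differs. The paper first groups the numerator by the parity of $j$, producing four exponent sequences $t_{1,j}<t_{2,j}<t_{3,j}<t_{4,j}$ with no surviving alternating sign, and then works directly with the four-part count $p_4$ and its cubic leading polynomial (Proposition~\ref{poly-part}), bounding the periodic remainder via Lemma~\ref{limit-C(n)}. You instead telescope away the factor $(1-q^s)^{-1}$ to pass to a three-part count with quadratic leading term, and postpone the alternating-sign cancellation to a later finite-difference step. In principle the two routes compute the same polynomial main term.

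The genuine gap is precisely the part you flag as the ``technical core'': the boundary analysis near $j\approx J_n$, where the partition count vanishes but its polynomial extension does not, together with the evaluation of the alternating sum with constants sharp enough to reach \eqref{n-Bound}. This is not a routine detail---it is the entire content of the paper's argument, carried out as a four-case analysis according to whether $n$ lies in $[t_{4,p-1},t_{1,p})$, $[t_{1,p},t_{2,p})$, $[t_{2,p},t_{3,p})$ or $[t_{3,p},t_{4,p})$. The threshold \eqref{n-Bound} is not a soft asymptotic: it arises from the explicit balance $12kp^4r^2s$ versus $96pr^8s^2$, which is tied to the paper's specific decomposition and to the bound $J=C_4^u-C_4^d$ on the four-part periodic remainder. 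Your error structure is different---your $\eta$-bound accumulates $(2j+2k+1)C(r,s)$ per $j$, i.e.\ $O(J_n^2)$ applications of a three-part remainder bound, whereas the paper uses only four applications of the four-part bound per grouped index, $O(p)$ in total---so even if your argument closes it will yield a different explicit threshold, not \eqref{n-Bound} as stated. One further correction: the hypothesis in the theorem is pairwise coprimality of $s,r-s,r+s,2r-s$ (this is what Lemma~\ref{lem-coprime} needs), and it is not equivalent to $\gcd(r,s)=1$; for instance when $r$ and $s$ are both odd, $r-s$ and $r+s$ are both even, which also invalidates your claim that $q=1$ is the only higher-order pole of the three-factor product.
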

	Consequently, when $ k $ is large enough to make the right hand side of \eqref{n-Bound} negative, that is to say  \eqref{mian-positive} has nonnegative coefficient for $ n\geq 0 $, which directly deduce to Conjecture \ref{stronger-ja-2} holding for $ n\geq 0 $. This leads to the following result.
	
	\begin{cor}\label{cor-main}
		When $ (s,r-s,r+s,2r-s)=1 $, $1\leq s< r/2 $ and
		\begin{align*}
			k\geq \frac{64r^{9}s}{(r-2s)^{3}},
		\end{align*}
		the theta series \eqref{mian-positive} has nonnegative coefficients for $ n\geq 0 $, and thereby  Conjecture \ref{stronger-ja-2} holds for such  $ k $.
	\end{cor}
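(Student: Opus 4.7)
The plan is to derive Corollary \ref{cor-main} as a direct consequence of Theorem \ref{main-thm} together with the factorization \eqref{two-part}. The argument splits into an algebraic step that makes the bound in \eqref{n-Bound} nonpositive, and a partition-theoretic convolution that transfers the resulting nonnegativity from \eqref{mian-positive} to \eqref{conj-strong-2}.

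First I would show that $k \geq 64 r^{9} s/(r-2s)^{3}$ is exactly the condition forcing the right-hand side of \eqref{n-Bound} to be nonpositive. Since $s, r, k > 0$, the factor $2 r^{2}\sqrt[3]{s/k} + k$ is strictly positive, so the sign of the product is that of $4 r^{3} \sqrt[3]{s/k} - r + 2s$. Because $1 \leq s < r/2$, we have $r - 2s > 0$; cubing the positive inequality $4 r^{3} \sqrt[3]{s/k} \leq r - 2s$ gives $64 r^{9} s \leq k (r - 2s)^{3}$, which is the Corollary's hypothesis. Under this hypothesis, the threshold $N(r,s,k)$ in Theorem \ref{main-thm} is at most zero, so the theorem applies for every integer $n \geq 0$ and yields positivity of the coefficient of $q^{n}$ in \eqref{mian-positive} for all such $n$.

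Second, to translate this into nonnegativity of \eqref{conj-strong-2}, I would re-index the numerator of \eqref{conj-strong-2} via $j \mapsto j + k$. Using $r(j+k)(j+k+1)/2 = r(j^{2}+(2k+1)j)/2 + rk(k+1)/2$ and factoring out $q^{-s(j+k)}$ from the parenthesis yields
\[
\sum_{j=k}^{\infty}(-1)^{j} q^{rj(j+1)/2}\bigl(q^{-sj} - q^{(j+1)s}\bigr) = (-1)^{k} q^{rk(k+1)/2 - sk} \sum_{j=0}^{\infty}(-1)^{j} q^{r(j^{2}+(2k+1)j)/2 - sj} \bigl(1 - q^{2js + (2k+1)s}\bigr),
\]
so after absorbing the external $(-1)^{k}$ prefactor in \eqref{conj-strong-2}, the numerator of \eqref{conj-strong-2} equals $q^{rk(k+1)/2 - sk}$ times the $(-1)^{k}$-weighted sum appearing in \eqref{mian-positive}. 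Combined with the decomposition \eqref{two-part} at $(\alpha,\beta) = (2,2)$, namely
\[
(q^{s}, q^{r-s}; q^{r})_{\infty} = (1-q^{s})(1-q^{r-s})(1-q^{r+s})(1-q^{2r-s}) \cdot (q^{2r+s}, q^{3r-s}; q^{r})_{\infty},
\]
this expresses \eqref{conj-strong-2} as the product of the series \eqref{mian-positive}, the positive power $q^{rk(k+1)/2 - sk}$, and the reciprocal of the nonmodular factor $(q^{2r+s}, q^{3r-s}; q^{r})_{\infty}$.

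Third, I would observe that $1/(q^{2r+s}, q^{3r-s}; q^{r})_{\infty}$ is the generating function for ordinary partitions into parts of the form $ar+s$ with $a \geq 2$ or $ar-s$ with $a \geq 3$, so its coefficients are nonnegative integers. A Cauchy product of two power series with nonnegative coefficients again has nonnegative coefficients, so combining the nonnegativity of \eqref{mian-positive} from the first step with this partition-theoretic observation yields nonnegativity of \eqref{conj-strong-2}, which is precisely Conjecture \ref{stronger-ja-2} in the stated range of $k$. The main point requiring care is the sign bookkeeping in the re-indexing identity: one must check that the external $(-1)^{k}$ in \eqref{conj-strong-2} is exactly consumed by the $(-1)^{k}$ arising from the substitution $j \mapsto j + k$, so that the two numerators agree up to the positive shift $q^{rk(k+1)/2 - sk}$. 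Once this is in place the conclusion follows immediately from Theorem \ref{main-thm}.
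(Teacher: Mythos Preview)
Your proposal is correct and follows essentially the same approach the paper intends: the paper states Corollary~\ref{cor-main} as an immediate consequence of Theorem~\ref{main-thm} (noting that the hypothesis on $k$ forces the bound~\eqref{n-Bound} to be nonpositive), and you have correctly filled in both the algebra showing $k\ge 64r^9s/(r-2s)^3 \iff 4r^3\sqrt[3]{s/k}\le r-2s$ and the convolution argument (via~\eqref{numberator-c} and the decomposition~\eqref{two-part} with $(\alpha,\beta)=(2,2)$) that carries nonnegativity from~\eqref{mian-positive} to~\eqref{conj-strong-2}. One minor notational slip: the quantity you call ``the threshold $N(r,s,k)$ in Theorem~\ref{main-thm}'' is actually the explicit bound in~\eqref{n-Bound}; the symbol $N(r,s,k)$ belongs to Theorem~\ref{thm-large-n}.
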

	
	When $   (s,r-s,r+s,2r-s)\neq1 $, we obtain the following two results.
	\begin{thm}\label{subthm-2|s}
		If $ 2\mid  s$ , $1\leq s< r/2$ and $ k\geq  1$, then the coefficient  of $ q^{n} $ in
		\begin{align*}
			(-1)^{k} \frac{\sum_{j=0}^{\infty}(-1)^{j}q^{r(j^{2}+(2k+1)j)/2-sj}(1-q^{2js+(2k+1)s})}{(1-q^{s})(1-q^{r-s})(1-q^{r+s}) }
		\end{align*}
		is positive if $ n  \geq(z_0+k) (2 z_0r-r+2 s) $, where $z_0$ is the maximum zero of certain polynomials in $p$ as given in Section \ref{non-coprime}.
	\end{thm}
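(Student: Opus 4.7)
The strategy follows that of Theorem \ref{main-thm}, but with the two-part decomposition (\ref{two-part}) specialized to $(\alpha,\beta)=(2,1)$, namely
\[
\frac{1}{(q^s,q^{r-s};q^r)_{\infty}}=\frac{1}{(1-q^s)(1-q^{r-s})(1-q^{r+s})}\cdot\frac{1}{(q^{2r+s},q^{2r-s};q^r)_{\infty}}.
\]
The truncated Jacobi series in (\ref{TJTTS_Merca}) is then the convolution of the nonmodular tail $1/(q^{2r+s},q^{2r-s};q^r)_{\infty}$, whose coefficients are nonnegative, with the finite piece
\[
G(q):=(-1)^{k}\frac{\sum_{j=0}^{\infty}(-1)^{j}q^{r(j^2+(2k+1)j)/2-sj}(1-q^{2js+(2k+1)s})}{(1-q^{s})(1-q^{r-s})(1-q^{r+s})}.
\]
Since convolution with a nonnegative-coefficient series preserves positivity, it suffices to prove that the coefficient of $q^n$ in $G(q)$ is positive for all $n\geq(z_0+k)(2z_0 r-r+2s)$.

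Next, I would apply the residue theorem, exactly as in the proof of Theorem \ref{main-thm}, to extract a closed expression for the coefficient of $q^n$ in $G(q)$ as a finite alternating sum indexed by $j$, each term being a piecewise quasi-polynomial in $n$. Under the hypothesis $2\mid s$ the integer $r$ is necessarily odd (because $(r,s)=1$), and among the four exponents $s,\,r-s,\,r+s,\,2r-s$ one has $\gcd(s,r-s,r+s)=1$ but $\gcd(s,2r-s)=2$; this parity anomaly explains why the factor $1/(1-q^{2r-s})$ must be removed from the partition piece and absorbed into the nonmodular tail. The alternating sum is then regrouped through a sign-reversing involution on triples of partitions with parts in $\{s,r-s,r+s\}$, coupled with the Jacobi triple product data, isolating a dominant positive term that grows quadratically in $n$ together with a uniformly bounded error.

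Finally, I would encode the crossover between the dominant positive growth and the negative error term as a system of polynomials in a real variable $p$ (arising naturally from the summation index $j$). Letting $z_0$ be the largest real zero of these polynomials, one verifies that for every $j\geq z_0$ the positive contribution strictly majorizes the worst-case negative contribution; minimizing the support inequality $n\geq(j+k)(2jr-r+2s)$ over admissible $j\geq z_0$ then yields the threshold $n\geq(z_0+k)(2z_0 r-r+2s)$.

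The main obstacle I anticipate is the bookkeeping of the involution in this three-factor setting: with $(1-q^{2r-s})$ absent, one fewer partition statistic is available to absorb the negative contributions coming from the alternating numerator. Making the involution terminate with a strictly positive (rather than merely nonnegative) residual, uniformly for all $n$ beyond the threshold, is precisely what forces the explicit algebraic conditions whose largest root defines $z_0$; the hypothesis $2\mid s$ enters decisively here through the parity structure of admissible part sizes.
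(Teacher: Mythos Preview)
Your high-level framing is right: the theorem concerns only the three-factor piece $G(q)$, and the coefficient $D(n)$ of $q^n$ is indeed a finite alternating sum $\sum_{j}\big(p_3(n-t_{1,j})-p_3(n-t_{2,j})-p_3(n-t_{3,j})+p_3(n-t_{4,j})\big)$ with $p_3$ the partition function for parts in $\{s,r-s,r+s\}$. You are also correct that $2\mid s$ forces $r$ odd and that this is why $1-q^{2r-s}$ is dropped.

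However, the core mechanism you propose is not what the paper does, and as stated it would not close. There is no sign-reversing involution in the argument. Instead, the paper proves a two-sided bound $P_{2}(n)+C_3^{d}\le p_3(n)\le P_{2}(n)+C_3^{u}$ where $P_2$ is an explicit quadratic (Lemma~\ref{lem-sub-2|s}), obtained by convolving the elementary two-part bound $\lfloor n/(s(r-s))\rfloor\le p_2(n)\le \lfloor n/(s(r-s))\rfloor+1$ with $1/(1-q^{r+s})$. Plugging these bounds into the alternating sum and truncating at $j=p-1$ gives a lower bound $D_0(n,p)$; then one performs a four-case analysis according to which subinterval $[t_{4,p-1},t_{1,p}),\,[t_{1,p},t_{2,p}),\,[t_{2,p},t_{3,p}),\,[t_{3,p},t_{4,p})$ contains $n$, producing explicit lower bounds $D_{i,d}(n,p)/(2\Delta_1)$ that are linear or quadratic in $n$. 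The polynomials whose maximum zero is $z_0$ are precisely these $D_{i,d}$ evaluated at the interval endpoints (and, in one case, at the vertex of the parabola): $D_{0,d}(t_{1,p},p)$, $D_{1,d}(t_{1,p},p)$, $D_{1,d}(-r+pr+2kpr+2p^2r-s/2,p)$, $D_{2,d}(t_{2,p},p)$, $D_{3,d}(t_{3,p},p)$, $D_{3,d}(t_{4,p},p)$. Once $p\ge z_0$, each of these is nonnegative, and since $t_{4,p-1}=(p+k)(2pr-r+2s)$ one gets the stated threshold.

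So the missing idea is not an involution but the sandwich Lemma~\ref{lem-sub-2|s} together with the interval-by-interval monotonicity analysis that pins down the six explicit quadratics in $p$. Your description ``encode the crossover \ldots\ as a system of polynomials in $p$'' is gesturing at the right object, but without the polynomial bound on $p_3(n)$ and the four-case split you have no way to produce those polynomials or to guarantee strict positivity on each subinterval.
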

	\begin{thm}\label{subthm-2nmids}
		If $ 2\nmid  s$, $1\leq s< r/2$ and $ k\geq  1$, then the coefficient  of $ q^{n} $ in
		\begin{align*}
			(-1)^{k} \frac{\sum_{j=0}^{\infty}(-1)^{j}q^{r(j^{2}+(2k+1)j)/2-sj}(1-q^{2js+(2k+1)s})}{(1-q^{s})(1-q^{r-s})(1-q^{2r-s}) }
		\end{align*}
		is positive if $   n  \geq( z_0 +k) (2 z_0 r -r+2 s) $, where $z_0$ is the maximum zero of certain polynomials in $p$ as given in Section \ref{non-coprime}.
	\end{thm}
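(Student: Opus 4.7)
The plan is to closely mirror the proof of Theorem \ref{main-thm}, specialized to the three-factor denominator $(1-q^s)(1-q^{r-s})(1-q^{2r-s})$ arising from the choice $(\alpha,\beta)=(1,2)$ in the decomposition \eqref{two-part}. The parity condition $2\nmid s$ is precisely what allows one to suppress the factor $(1-q^{r+s})$ from the four-factor denominator of Theorem \ref{main-thm} while keeping the remaining exponents $s,\, r-s,\, 2r-s$ in a sufficiently independent configuration for the residue calculation to produce a quadratic main term.

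First I would apply the residue theorem at the roots-of-unity poles of $\frac{1}{(1-q^s)(1-q^{r-s})(1-q^{2r-s})}$ to write its $q^n$-coefficient $a(n)$ as an explicit quadratic polynomial in $n$ (the contribution of the triple pole at $q=1$, with leading coefficient $\frac{1}{2s(r-s)(2r-s)}$) plus a bounded oscillating error coming from the simple poles at the other roots of unity. Multiplying by the numerator and collecting powers of $q$, the coefficient of $q^n$ in the theta series becomes an alternating sum
\begin{equation*}
A_k(n)=\sum_{j\geq 0}(-1)^{j+k}\Bigl[a\bigl(n-\tfrac{rj(j+1)}{2}+sj\bigr)-a\bigl(n-\tfrac{rj(j+1)}{2}+sj-(2j+2k+1)s\bigr)\Bigr],
\end{equation*}
where terms with negative argument are set to zero. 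Pairing indices $j$ and $j+1$ as in Theorem \ref{main-thm} and then applying the partition-theoretic involutions on restricted partitions (adapted to the three residue classes $s,\, r-s,\, 2r-s$ rather than four) shows that after cancellation $A_k(n)$ is bounded below by its $j=0$ contribution.

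Finally I would express the defect between the quadratic $j=0$ contribution and the net residual as a polynomial $Q_{r,s,k}(p)$ in an auxiliary variable $p$ that parametrizes the truncation index, whose coefficients are read off from the explicit form of $a(n)$. Letting $z_0$ denote the largest real root of $Q_{r,s,k}$ and back-substituting the relation between $p$ and $n$ yields the threshold $n\geq (z_0+k)(2z_0r-r+2s)$ in the statement. The principal obstacle I anticipate is verifying that the involution used in the four-class case of Theorem \ref{main-thm} survives the removal of the residue class $r+s$: one must either build a fresh bijection on compositions with parts in $\{s,\, r-s,\, 2r-s\}$ that absorbs every negative contribution, or replace the pairing with a sharper direct estimate. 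A secondary difficulty is confirming that $Q_{r,s,k}$ always has a positive real root, which will likely require isolating the sign of its coefficients and treating a few small cases of $r,s,k$ by hand.
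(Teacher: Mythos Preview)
Your high-level plan --- replace $p_4$ by the three-part partition function with generating function $\frac{1}{(1-q^s)(1-q^{r-s})(1-q^{2r-s})}$, write its coefficient as a quadratic main term plus a bounded periodic error, and analyse a truncation parameter --- matches the paper. But the core mechanism you describe is not what the paper does, and the part you flag as the ``principal obstacle'' rests on a misreading of Theorem~\ref{main-thm}.

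There is no partition-theoretic involution in the proof of Theorem~\ref{main-thm}, and hence nothing to ``survive the removal of the residue class $r+s$''. What actually happens is purely a polynomial case analysis. After the parity split the numerator becomes $\sum_{j\ge 0}(q^{t_{1,j}}-q^{t_{2,j}}-q^{t_{3,j}}+q^{t_{4,j}})$ with the four explicit exponent sequences $t_{i,j}$ (your formula $rj(j+1)/2-sj$ omits the $k$-dependence: the exponents involve $r(j^2+(2k+1)j)/2-sj$). One then uses the two-sided bound $P_{2,r-s}(n)+C_{3,r-s}^d\le p_{3,r-s}(n)\le P_{2,r-s}(n)+C_{3,r-s}^u$ (obtained from Lemma~\ref{lem-sub-2|s} via the substitution $s\mapsto r-s$) to replace each $p_3(n-t_{i,j})$ by the quadratic $P_{2,r-s}$ plus an explicit constant. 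The resulting lower bound for the partial sum up to $j=p-1$ is a polynomial $E_0(n,p)$, and then one performs a four-case split according to whether $n$ lies in $[t_{4,p-1},t_{1,p})$, $[t_{1,p},t_{2,p})$, $[t_{2,p},t_{3,p})$, or $[t_{3,p},t_{4,p})$, adjoining the extra $p_3(n-t_{i,p})$ terms as they enter. Each case yields an explicit quadratic-in-$n$ lower bound $E_{i,d}(n,p)$, and one checks monotonicity in $n$ to reduce positivity to the boundary values.

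Consequently $z_0$ is not the largest root of a single polynomial $Q_{r,s,k}(p)$ but the maximum among the real zeros of \emph{six} quadratics in $p$: $E_{0,d}(t_{1,p},p)$, $E_{1,d}(t_{1,p},p)$, $E_{1,d}$ at the vertex of the Case~1 parabola, $E_{2,d}(t_{2,p},p)$, $E_{3,d}(t_{3,p},p)$, $E_{3,d}(t_{4,p},p)$. The threshold $n\ge (z_0+k)(2z_0 r - r + 2s)$ then comes from $n\ge t_{4,z_0-1}$. So your outline needs to replace the involution step with this four-case polynomial comparison; once you do, the proof is a direct transcription of the argument for Theorem~\ref{subthm-2|s} with $s$ replaced by $r-s$ in the partition-count lemma, and there is no genuine new obstacle.
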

	
	Similar to Corollary \ref{cor-main}, from Theorem \ref{subthm-2|s} and Theorem \ref{subthm-2nmids} we also derive the following results,  the details can be seen in Section \ref{pf-large-n}.
	\begin{cor}\label{cor-3}
		When $ 2\mid s $, $1\leq s< r/2 $ and
		\begin{align*}
			k> \frac{4r^3s-2s(r-2s)}{r(r-2s)},
		\end{align*}
		the theta series in Theorem \ref{subthm-2|s} has nonnegative coefficients for $ n\geq0 $.
		
		When $ 2\nmid  s$, $1\leq s< r/2 $ and
		\begin{align*}
			k> \frac{8r^3s-2s(r-2s)}{r(r-2s)},
		\end{align*}
		the theta series in Theorem \ref{subthm-2nmids} has nonnegative coefficients for $ n\geq0 $. Moreover,  Conjecture \ref{stronger-ja-2} holds for the above $k$'s.
	\end{cor}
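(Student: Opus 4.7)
The plan is to mirror the derivation of Corollary \ref{cor-main} from Theorem \ref{main-thm}. Both Theorem \ref{subthm-2|s} and Theorem \ref{subthm-2nmids} guarantee positivity of the coefficient of $q^n$ as soon as $n \geq (z_0 + k)(2 z_0 r - r + 2s)$, where $z_0 = z_0(r,s,k)$ is the largest root of an auxiliary polynomial described in Section \ref{non-coprime} and is nonnegative. Since $k \geq 1$, the factor $z_0 + k$ is strictly positive, so the bound becomes vacuous --- and nonnegativity then holds for every $n \geq 0$ --- precisely when $2 z_0 r - r + 2s \leq 0$, equivalently $z_0 \leq (r - 2s)/(2r)$, which is meaningful because $s < r/2$.

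The next step is to translate $z_0 \leq (r-2s)/(2r)$ into an explicit lower bound on $k$. The defining polynomials in $p$ from Section \ref{non-coprime} have coefficients that depend on $k$ in a simple rational way, and from their structure one checks that $z_0$ is strictly decreasing in $k$. It therefore suffices to find the unique threshold $k_0$ at which $z_0(k_0) = (r-2s)/(2r)$; substituting this value of $z_0$ into the defining polynomial and solving for $k$ yields $k_0 = (4r^3 s - 2 s(r - 2s))/(r(r-2s))$ in the even case and $k_0 = (8r^3 s - 2 s(r - 2s))/(r(r-2s))$ in the odd case. Any $k > k_0$ then forces $z_0 < (r-2s)/(2r)$, giving the claimed nonnegativity of the respective truncated theta series for all $n \geq 0$.

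It remains to upgrade this nonnegativity to Conjecture \ref{stronger-ja-2} itself. Using the decomposition \eqref{two-part}, the denominator $(q^s, q^{r-s}; q^r)_\infty$ of \eqref{conj-strong-2} factors as the three-term denominator appearing in Theorem \ref{subthm-2|s} or Theorem \ref{subthm-2nmids} --- corresponding to $(\alpha, \beta) = (2,1)$ or $(1,2)$ depending on the parity of $s$ --- times the nonmodular infinite product $1/(q^{\alpha r + s}, q^{(\beta+1)r-s}; q^r)_\infty$. The latter is manifestly a partition generating function, so it has nonnegative integer coefficients, and convolution with the already nonnegative series from the theorem preserves nonnegativity. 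This finishes Conjecture \ref{stronger-ja-2} for the stated range of $k$ in both parities.

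The main obstacle will be the careful analysis of the polynomials that implicitly define $z_0$: one needs their explicit form from Section \ref{non-coprime}, a verification that $z_0$ is decreasing in $k$, and the algebraic inversion that produces the precise thresholds above. Once this is in hand, the remainder is a direct transcription of the Corollary \ref{cor-main} argument, namely the trivialization of the $n$-bound followed by a positivity-preserving convolution.
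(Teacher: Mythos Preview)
Your high-level strategy — force the lower bound $(z_0+k)(2z_0 r - r + 2s)$ to become nonpositive by arranging $z_0 \leq (r-2s)/(2r)$, then deduce Conjecture~\ref{stronger-ja-2} by convolving with the residual infinite product — is exactly what the paper does, and your convolution step via the $(\alpha,\beta)\in\{(2,1),(1,2)\}$ split of \eqref{two-part} is correct.

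Where you diverge from the paper is in how you propose to extract the threshold on $k$. You plan to substitute $z_0=(r-2s)/(2r)$ into ``the defining polynomial'' and solve for $k$, invoking monotonicity of $z_0$ in $k$. But there are six polynomials (the $D_{\bullet,d}$ or $E_{\bullet,d}$ evaluated at the various $t_{i,p}$), each quadratic in $p$ with coefficients that are themselves quadratic in $k$; your substitution therefore yields a quadratic in $k$ for each of the six, and there is no reason the maximum of the resulting roots simplifies to the clean rational expressions in the corollary. The paper instead bypasses the exact $z_0$ altogether: inspecting the six polynomials in the appendix, each has the shape $p^2\bigl(4rs(k-1)+O(s^2)\bigr) + p\bigl(-8r^3s^2 + \cdots\bigr) + \cdots$ in the even case (respectively $-16r^3s^2$ in the odd case), and one verifies that all the suppressed terms are nonnegative. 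Hence every polynomial is positive as soon as $p\cdot\bigl(4rs(k-1)+8s^2\bigr)\geq 8r^3s^2$, i.e., once $p\geq \hat z(k):=2r^3s/\bigl(r(k-1)+2s\bigr)$ (respectively $4r^3s/\bigl(r(k-1)+2s\bigr)$). This $\hat z(k)$ is an explicit upper bound for $z_0$, and the stated threshold on $k$ is precisely the solution of $2r\,\hat z(k)\leq r-2s$. So the constants in the corollary encode this crude two-term estimate, not the true $z_0$; your monotonicity-and-inversion plan, even if made rigorous, would land on a different (messier) bound and would not recover the corollary as stated.
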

	
	Secondly, we observe that the second part on the right hand side of \eqref{two-part} is a nonmodular infinite product.  The work on the asymptotic properties of nonmodular infinite products was first utilized  by Lehner \cite{Lehner} and Livingood \cite{Livingdood} and developed by Grosswald \cite{Grosswald}. Recently, Chern used this method to prove Seo and Yee's conjecture \cite{Seo} related to the index of seaweed algebras and partitions \cite{Coll} for sufficiently large $n$, which was totally confirmed by Craig later in \cite{Craig}.
	
	For positive integers $ 1\leq a< M/2 $ and $ (a,M)=1 $, denote
	\begin{align}\label{def-Gq}
		G_{a,M} (q) = \frac{1}{(q^{2M+a },q^{3M-a};q^{M})_{\infty}} =\sum_{n=0}^{\infty}g_{a,M}(n)q
	\end{align}
	and
	\begin{align}\label{def-Phi-aM}
		\Phi_{a,M}=\log \frac{1}{(q^{a } ;q^{M})_{\infty}}, \quad \Psi_{a,M}=\log \frac{1}{(q^{2M+a } ;q^{M})_{\infty}}.
	\end{align}
	
	In 1959, Iseki gave an asymptotic formula for the coefficient of $  {G (q)}/{(q^{ a },q^{ M-a};q^{M})_{2}} $ as follows, see  \cite{Iseki}. Chern further generalised this result in \cite{Chern-1}.
	\begin{thm}[Iseki]
		Let
		\begin{align*}
			\frac{1}{ (q^{a},q^{M-a};q^{M})_{\infty}}=\sum_{n=0}^{\infty}g_{1}(n)q^{n}.
		\end{align*}
		When $ n $ is sufficiently large,
		\begin{align*}
			g_{1}(n) \sim \frac{1}{4\sin\frac{ a\pi}{M}} \left(3M\right)^{-\frac{1}{4}}n^{-\frac{3}{4}}\mathrm{exp}\left( 2\pi \sqrt{\frac{n}{3M}}\right).
		\end{align*}
	\end{thm}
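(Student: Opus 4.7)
My plan is to deduce the asymptotic from the classical Hardy-Ramanujan circle method, equivalent here to Meinardus's theorem on infinite products of Dirichlet type. Write
\[
f(q):=\frac{1}{(q^{a},q^{M-a};q^{M})_{\infty}}=\prod_{n\geq 1}(1-q^{n})^{-a_{n}},
\]
where $a_{n}=1$ if $n\equiv\pm a\pmod{M}$ and $a_{n}=0$ otherwise, and attach the Dirichlet series
\[
D(s):=\sum_{n\geq 1}a_{n}n^{-s}=M^{-s}\bigl[\zeta(s,a/M)+\zeta(s,1-a/M)\bigr],
\]
with $\zeta(s,x)$ the Hurwitz zeta function. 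All the relevant analytic data for $f$ will be encoded in this meromorphic function.

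The key computations are as follows. Since each $\zeta(s,\cdot)$ has a simple pole at $s=1$ with residue $1$, the series $D$ has a simple pole at $s=1$ with residue $A=2/M$ and is otherwise entire. Using $\zeta(0,x)=\tfrac{1}{2}-x$ together with Lerch's formula $\zeta'(0,x)=\log\Gamma(x)-\tfrac{1}{2}\log(2\pi)$ and the reflection $\Gamma(x)\Gamma(1-x)=\pi/\sin(\pi x)$, I obtain $D(0)=0$ and $D'(0)=-\log\!\bigl(2\sin(a\pi/M)\bigr)$. Next I would Mellin-invert,
\[
\log f(e^{-t})=\frac{1}{2\pi i}\int_{c-i\infty}^{c+i\infty}\Gamma(s)\zeta(s+1)D(s)\,t^{-s}\,ds\qquad(c>1),
\]
and push the contour to some $c=-\delta<0$. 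The simple pole at $s=1$ contributes $\pi^{2}/(3Mt)$, while the double pole of $\Gamma(s)\zeta(s+1)$ at $s=0$ is cut down to a simple pole by the zero $D(0)=0$ and contributes exactly $D'(0)$; thereby
\[
\log f(e^{-t})=\frac{\pi^{2}}{3Mt}-\log\!\bigl(2\sin(a\pi/M)\bigr)+O(t^{\delta})\qquad(t\to 0^{+}).
\]

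Finally, I would extract coefficients via Cauchy's formula $g_{1}(n)=\frac{1}{2\pi}\int_{-\pi}^{\pi}f(e^{-t+i\theta})\,e^{n(t-i\theta)}\,d\theta$ evaluated at the saddle $t_{0}:=\pi/\sqrt{3Mn}$ of the leading term $\pi^{2}/(3Mt)+nt$. Expansion near the saddle gives a Gaussian with variance $t_{0}/(2n)$, so the major-arc contribution is
\[
g_{1}(n)\sim \frac{1}{2\pi}\cdot\sqrt{\frac{\pi t_{0}}{n}}\cdot\frac{\exp\!\bigl(2\pi\sqrt{n/(3M)}\bigr)}{2\sin(a\pi/M)}=\frac{(3M)^{-1/4}}{4\sin(a\pi/M)}\,n^{-3/4}\exp\!\Bigl(2\pi\sqrt{n/(3M)}\Bigr),
\]
which is the asserted formula. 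The main obstacle is the minor-arc estimate $|f(e^{-t+i\theta})|\ll |f(e^{-t})|\,e^{-c/t}$ for $\theta$ bounded away from $0\pmod{2\pi}$; this reduces to a uniform lower bound on $\sum_{n\equiv\pm a\,(\mathrm{mod}\,M)}\log|1-e^{-nt+in\theta}|^{-1}$ minus its value at $\theta=0$, and is handled by partial summation together with Hurwitz-type estimates for exponential sums in arithmetic progressions, in the spirit of Hardy-Ramanujan's treatment of $p(n)$.
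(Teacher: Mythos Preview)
The paper does not prove this statement: it is quoted as a result of Iseki (1959) and is included only to set context for the authors' own Theorems~\ref{Phi-aM}--\ref{bound-gast} on the shifted product $G_{a,M}(q)$. There is therefore no in-paper proof to compare against.

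Your sketch is the correct classical derivation, essentially Meinardus's theorem specialised to this product. The computations of $D(0)=0$, $D'(0)=-\log\bigl(2\sin(a\pi/M)\bigr)$, the residues of $\Gamma(s)\zeta(s+1)D(s)t^{-s}$ at $s=1$ and $s=0$, and the saddle-point normalisation are all right and yield the stated constant $(4\sin(a\pi/M))^{-1}(3M)^{-1/4}$. The one step you flag as an ``obstacle''---the minor-arc bound---is genuine but standard: since $a_n\in\{0,1\}$ with positive lower density $2/M$, Meinardus's condition (III) (or an equivalent Hardy--Ramanujan estimate on $\sum a_n(1-\cos n\theta)e^{-nt}$) holds, giving $|f(e^{-t+i\theta})|\le f(e^{-t})\exp(-c/t)$ uniformly for $\theta$ away from $0$. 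If you want to make this self-contained rather than invoking Meinardus, the paper's own Section~\ref{Nonmodular} shows exactly the kind of explicit minor-arc work (Theorem~\ref{lem-Gq-bound}) that would be needed, and the argument there adapts with only cosmetic changes since $(q^a,q^{M-a};q^M)_\infty^{-1}$ differs from $G_{a,M}(q)$ by the finite factor $(q^a,q^{M-a};q^M)_2$.
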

	By using the circle method, we further extend Chern's idea to derive the following expression for the nonmodular infinite product  $ \Psi_{a, M}(q)+\Psi_{M-a, M}(q)$.
	\begin{thm}\label{Phi-aM}
		Let $X \geq 16$ be a sufficiently large positive number, and
		$$
		q:=e^{-\tau+\frac{2 \pi i h}{k_{1}}},
		$$
		where $1 \leq h \leq k_{1} \leq\lfloor\sqrt{2 \pi X}\rfloor=: N$ with $(h, k_{1})=1$ and $\tau:=X^{-1}+2 \pi i Y$ with $|Y| \leq \frac{1}{k_{1} N}$. Let $a,M$ be  the positive integers with $ 1\leq a< M/2 $ and $ (a,M)=1 $. Denote $b$ to be the unique integer between 1 and $(k_{1}, M)$ such that $b \equiv-h a\ (\bmod\ (k_{1}, M))$,
		then we have
		\begin{align*}
			\Psi _{a, M}(q)+\Psi _{M-a, M}(q)= & \frac{\pi^2}{\tau} \frac{(k_{1}, M)^2}{k_{1}^2 M}\left(\frac{2b(b-(k_{1},M))}{(k_{1}, M)^2 }+\frac{1}{3}\right)+2E_{1},
		\end{align*}
		where $2E_1$ is the error term such that
		$$
		|\Re(E_{1})| \ll_M X^{\frac{1}{2}} \log X.
		$$
	\end{thm}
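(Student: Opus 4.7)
I would evaluate $\Psi_{a,M}(q)+\Psi_{M-a,M}(q)$ by a Mellin--Barnes integral representation, following the framework of Grosswald and the recent refinements of Chern for nonmodular infinite products cited above.

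First, expand
\[
\Psi_{a,M}(q)=\sum_{m\ge 1}\sum_{n\ge 0}\frac{q^{m(2M+a+nM)}}{m},
\]
and similarly for $\Psi_{M-a,M}(q)$. Substituting $q=e^{-\tau+2\pi i h/k_1}$, each summand factors as a decay piece $e^{-m(2M+a+nM)\tau}/m$ times a pure phase $\zeta_{k_1}^{hm(2M+a+nM)}$, where $\zeta_{k_1}=e^{2\pi i/k_1}$. Set $d=(k_1,M)$ and write $k_1=dk_1'$, $M=dM'$ with $(k_1',M')=1$. The phase depends on the summation indices only through residues modulo $k_1$, and its $n$-dependence has period $k_1'$. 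The defining congruence $b\equiv -ha\pmod{d}$ arises precisely because the image of the arithmetic progression $\{2M+a+nM\}$ in $\mathbb{Z}/d\mathbb{Z}$ under multiplication by $h$ equals $\{ha\bmod d\}=\{-b\bmod d\}$, so the $d$-periodic residue of the phase is controlled by the single integer $b$.

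Next, regroup the double sum by these congruence classes and insert the Mellin transform
\[
e^{-y}=\frac{1}{2\pi i}\int_{(c)}\Gamma(s)\,y^{-s}\,ds\qquad (c>0,\ \Re(y)>0).
\]
This converts $\Psi_{a,M}(q)$ into a contour integral whose integrand is a product of $\Gamma(s)\,\tau^{-s}$, a factor $\zeta(s+1)$ from summation over $m$, and weighted Hurwitz zeta values $\zeta(s,\alpha)$ from summation over $n$ within each class. Shifting the contour past $\Re(s)=1$ picks up the simple pole of the Hurwitz zeta at $s=1$; when the residues of $\Psi_{a,M}$ and $\Psi_{M-a,M}$ are combined, the symmetry $a\leftrightarrow M-a$ cancels the parity-odd contributions, and what remains assembles into a second Bernoulli-polynomial evaluation $B_2(b/d)$. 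Using the elementary identity $2B_2(b/d)=\frac{2b(b-d)}{d^2}+\frac{1}{3}$ then yields exactly the claimed main term $\frac{\pi^2}{\tau}\cdot\frac{d^2}{k_1^2 M}\bigl(\frac{2b(b-d)}{d^2}+\frac{1}{3}\bigr)$.

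Finally, the remainder $2E_1$ is the shifted contour integral. I would bound its real part using Stirling's formula for $\Gamma(s)$ and standard polynomial growth of Hurwitz zeta on vertical lines, together with $|\tau|\asymp X^{-1}$ and $k_1\le N=\lfloor\sqrt{2\pi X}\rfloor$, to obtain $|\Re(E_1)|\ll_M X^{1/2}\log X$. The technical heart of the argument is the middle step: organizing the double sum by the GCD-induced congruence classes so that the building blocks collapse into identifiable Hurwitz zeta values, and then verifying that the residues at $s=1$ pack into precisely the quadratic-in-$b$ expression stated. Once this algebraic identification is in hand, the error estimate is routine, and the two $\Psi$ terms contribute the factor of $2$ in front of $E_1$.
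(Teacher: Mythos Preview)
Your plan has the right skeleton---Mellin--Barnes representation, residue at $s=1$ producing the $\tfrac{1}{\tau}$ main term packaged as $2B_2(b/d)$, and the symmetrization $a\leftrightarrow M-a$ killing the odd contributions---and this does match the paper's identification of the main term (their $R_{11}$). The identity $2B_2(b/d)=\tfrac{2b(b-d)}{d^2}+\tfrac13$ is exactly right.

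The gap is in your last paragraph: the error estimate here is \emph{not} routine. Under the hypotheses, $\tau=X^{-1}+2\pi iY$ with $|Y|\le 1/(k_1N)$, so $|\Im\tau|/\Re\tau$ can be as large as $\sqrt{2\pi X}$ and hence $|\arg\tau|=\tfrac{\pi}{2}-O(X^{-1/2})$. On any vertical line $\Re(s)=c$ the factor $|\tau^{-s}|=|\tau|^{-c}e^{\Im(s)\arg\tau}$ nearly cancels the Stirling decay $|\Gamma(s)|\asymp|t|^{c-1/2}e^{-\pi|t|/2}$, leaving only $e^{-c'|t|/\sqrt{X}}$, so the $t$-integral alone costs an extra positive power of $X$. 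In addition you must sum over $\asymp k_1^2/d\ll X$ congruence classes, and $\zeta(1+s,\mu/k_1)$ contributes a factor of size $(k_1/\mu)^{1-c}$ for small $\mu$. A naive vertical-line bound along these lines yields something like $X^{5/4}$, not $X^{1/2}\log X$.

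The paper, following Chern, avoids this by applying the Hurwitz functional equation to $\zeta(s,\lambda/K)$ \emph{before} shifting. This recasts everything in the variable $z=\tau k_1/(2\pi)$ and splits the expression into four integrals $\gamma_1,\dots,\gamma_4$, which are then \emph{not} bounded on a vertical line: instead $\gamma_1+\gamma_3$ is recognized as (the real part of) an auxiliary series $\Omega_{a,M}(q^\ast)$ at the modular-transformed point $q^\ast=e^{-2\pi/(Kz)+2\pi i\beta h'/k_1}$, which converges geometrically and is $O_M(1)$; the cross-terms $\gamma_2\pm\omega_2,\ \gamma_4\pm\omega_4$ are shown to be $\ll_M X^{1/2}$. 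The residual $X^{1/2}\log X$ comes from subdominant residues ($R_{21}$, etc.) and from the finite subtraction $T_a=\log\frac{1}{(1-q^a)(1-q^{a+M})}$. One further cancellation you do not name: each $\Psi$ individually carries a term $R_{41}$ involving $\zeta'(-1,b/d)-\zeta'(-1,b^\ast/d)$, which drops out only upon adding $\Psi_{a,M}+\Psi_{M-a,M}$ because $b(M-a)=d-b(a)$. In short, the step you label ``routine'' is precisely where the technical work lies; your outline needs the functional-equation/$\Omega$ device (or an equivalent substitute) to close.
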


	We obtain an asymptotic formula for the coefficient of $ G_{a,M}(q)$, for which the details will be given in Section \ref{pf-large-n}.
	\begin{thm}\label{approxiamtion-gn}
		When $ n $ is sufficiently large,
		\begin{align*}
			g_{a,M}(n)\sim \frac{1}{2\Delta_{M} \sin\frac{a\pi}{M}} \Big({\frac{\pi^{2}}{3Mn}}\Big)^{\frac{5}{2}}I_{-5}\Big(2\pi\sqrt{\frac{n}{3M}}\Big).
		\end{align*}
		where $ I_{v}(n) $ is the modified Bessel function of the first kind and $ \Delta_{M} :=a(M-a)(M+a)(2M-a)$.
	\end{thm}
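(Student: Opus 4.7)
The plan is to apply the Hardy--Ramanujan--Wright circle method in the Iseki--Chern form. Starting from Cauchy's integral formula
\begin{equation*}
g_{a,M}(n) = \frac{1}{2\pi i}\oint_{|q|=e^{-1/X}}\frac{G_{a,M}(q)}{q^{n+1}}\,dq,
\end{equation*}
with $X$ chosen of order $\sqrt{n}$ so that the saddle of the integrand falls on the contour, I would dissect the circle into Farey arcs of order $N = \lfloor\sqrt{2\pi X}\rfloor$, writing $q = \exp(-\tau + 2\pi ih/k_1)$ on each arc with $\tau = X^{-1}+2\pi iY$ and $|Y|\le 1/(k_1 N)$. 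This matches exactly the hypotheses of Theorem \ref{Phi-aM}.

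On the major arc $h/k_1 = 0/1$ one has $(k_1,M)=1$ and $b=1$, so Theorem \ref{Phi-aM} gives the leading singular expansion $\log G_{a,M}(q) = \pi^{2}/(3M\tau) + 2E_1$ with $|\Re E_1|\ll_M X^{1/2}\log X$. To extract the polynomial prefactor that produces the Bessel index $-5$, I would sharpen this either by factoring
\begin{equation*}
G_{a,M}(q) = \frac{(1-q^{a})(1-q^{M-a})(1-q^{M+a})(1-q^{2M-a})}{(q^{a},q^{M-a};q^{M})_\infty}
\end{equation*}
and combining Iseki's theorem for the denominator with the expansion $(1-e^{-c\tau}) = c\tau(1+O(\tau))$, or equivalently by computing the Mellin transform
\begin{equation*}
\widetilde F(s) = \frac{\Gamma(s)\zeta(s+1)}{M^{s}}\bigl[\zeta(s,2+a/M)+\zeta(s,3-a/M)\bigr]
\end{equation*}
of $\log G_{a,M}(e^{-\tau})$ and reading off the residue at the double pole $s=0$ via $\zeta'(0,\alpha) = \log\Gamma(\alpha) - \tfrac12\log(2\pi)$ together with Euler's reflection $\Gamma(\alpha)\Gamma(1-\alpha) = \pi/\sin\pi\alpha$. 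Either route produces $G_{a,M}(e^{-\tau}) \sim C(a,M)\,\tau^{4}\exp(\pi^{2}/(3M\tau))$ with $C(a,M)$ given explicitly in terms of $\Delta_M$ and $\sin(a\pi/M)$. With this refined asymptotic in hand, deform the resulting contour to a Hankel loop around the negative real axis and apply the integral representation
\begin{equation*}
I_{\nu}(z) = \Big(\frac{z}{2}\Big)^{\nu}\frac{1}{2\pi i}\int_{\mathcal{H}}t^{-\nu-1}\exp\!\Big(t+\frac{z^{2}}{4t}\Big)dt
\end{equation*}
with $\nu=-5$ and $z=2\pi\sqrt{n/(3M)}$; after the substitution $t = n\tau$ the claimed asymptotic drops out directly.

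For every minor arc ($k_1\geq 2$), Theorem \ref{Phi-aM} shows that the real part of the leading singular term is strictly smaller than $\Re(\pi^{2}/(3M\tau))$ at the saddle, so combined with the $X^{1/2}\log X$ bound on $\Re E_1$, the minor-arc contribution is exponentially subdominant relative to the major-arc saddle value $\exp(2\pi\sqrt{n/(3M)})$. The main obstacle I anticipate is the uniform quantitative control of all minor-arc integrals and of the tails of the major arc: Theorem \ref{Phi-aM} supplies only a real-part estimate for $E_1$, while the contour integral against $q^{-n-1}$ is sensitive to the full complex error, and one must propagate these bounds through the exponential and through the oscillation across residues $h\bmod k_1$ to show that they cannot collectively rival the main term. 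This bookkeeping follows Chern's template in \cite{Chern-1}; the novel element relative to Iseki's case is the $\tau^{4}$ prefactor, which shifts the Bessel index from $-1$ to $-5$ but leaves the overall structural argument intact.
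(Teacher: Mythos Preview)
Your proposal is correct and follows essentially the same strategy as the paper: circle method, a refined Mellin expansion near $q=1$ yielding the $\tau^{4}$ prefactor, Hankel deformation to the Bessel integral, and exponential subdominance of the remaining arcs via Theorem~\ref{Phi-aM}. The paper packages your ``sharpening'' step as the separate Theorem~\ref{approximation-near-pole}, and instead of a full Farey dissection it uses only a three-arc split (small neighbourhoods of $q=1$ and $q=-1$, plus the rest), treating $q=-1$ explicitly rather than as the $k_{1}=2$ minor arc; these are organisational rather than substantive differences. Your worry that Theorem~\ref{Phi-aM} controls only $\Re(E_{1})$ is not an obstacle: on the minor arcs one only needs $|G_{a,M}(q)|=\exp(\Re\log G_{a,M}(q))$, while on the major arc your direct Mellin computation (equivalently the factorisation route) supplies a genuine two-sided bound with error $O(M^{3/4}X^{-3/4})$, exactly as in Theorem~\ref{approximation-near-pole}.
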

	
	By replacing $ a,M $ by $ s,r $ in \eqref{def-Gq}, respectively, we also have the bounds for $  g_{s,r}(n) $.
	\begin{thm}\label{bound-gast}
		when $ n$ is sufficiently large,
		\begin{align*}
			&	g_{s,r}(n)\geq  \frac{0.99\pi^{4}}{2(3r)^{\frac{11}{4}}\Delta\sin (s\pi /r)} n^{-\frac{11}{4}}\mathrm{exp} \Big(2\pi\sqrt{\frac{n}{3r}}\Big)=:g^{d}(n),\\
			&
			g_{s,r}(n)\leq \frac{1.01\pi^{4}}{2(3r)^{\frac{11}{4}}\Delta\sin (s\pi /r)} n^{-\frac{11}{4}}\mathrm{exp} \Big(2\pi\sqrt{\frac{n}{3r}}\Big)=:g^{u}(n),
		\end{align*}
		where $ \Delta:=s(r-s)(r+s)(2r-s). $
	\end{thm}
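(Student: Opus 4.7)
The bounds should fall out of Theorem \ref{approxiamtion-gn} by inserting the classical large-argument asymptotic of the modified Bessel function $I_{-5}$; the slack between $0.99$ and $1.01$ is exactly what absorbs the remaining error terms once $n$ is taken large enough.

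The first step is to apply the standard expansion
\[
I_\nu(z)=\frac{e^{z}}{\sqrt{2\pi z}}\left(1-\frac{4\nu^{2}-1}{8z}+O(z^{-2})\right)\qquad(z\to\infty)
\]
with $\nu=-5$ and $z=2\pi\sqrt{n/(3r)}$, which gives
\[
I_{-5}\bigl(2\pi\sqrt{n/(3r)}\bigr)=\frac{(3r)^{1/4}}{2\pi\,n^{1/4}}\exp\bigl(2\pi\sqrt{n/(3r)}\bigr)\bigl(1+O(\sqrt{r/n})\bigr).
\]
Substituting into the formula of Theorem \ref{approxiamtion-gn} and collecting the powers of $\pi$, $3r$ and $n$ produces the explicit leading coefficient appearing in $g^{d}$ and $g^{u}$, times $n^{-11/4}\exp\bigl(2\pi\sqrt{n/(3r)}\bigr)$, times a multiplicative factor $1+\varepsilon(n)$ with $\varepsilon(n)=O_{r,s}(n^{-1/2})$.

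The second step is to quantify $\varepsilon(n)$ precisely enough to force $|\varepsilon(n)|\leq 0.01$ for all sufficiently large $n$. Two contributions must be tracked: the first omitted term $-\tfrac{99}{8z}$ of the Bessel asymptotic, which admits the explicit majorant $\tfrac{99\sqrt{3r}}{16\pi\sqrt{n}}$, and the error hidden inside the $\sim$ of Theorem \ref{approxiamtion-gn}, which ultimately traces back to the circle-method estimate in Theorem \ref{Phi-aM} (the $E_{1}$ error bounded by $X^{1/2}\log X$) and is polynomial in $1/\sqrt{n}$ after the saddle-point optimization $X\asymp\sqrt{n/r}$. Once both are made explicit, choosing $n$ beyond a threshold $n_{0}(r,s)$ yields both the upper bound $g_{s,r}(n)\leq g^{u}(n)$ and the lower bound $g_{s,r}(n)\geq g^{d}(n)$ simultaneously.

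The main obstacle is not conceptual but book-keeping: one has to revisit the implicit $O$-constants throughout the proof of Theorem \ref{approxiamtion-gn} --- in particular the major-arc/minor-arc split from Theorem \ref{Phi-aM} and the resulting Bessel-integral --- and carry them through to obtain a uniform quantitative bound on $|\varepsilon(n)|$ valid for all $n\geq n_{0}(r,s)$. Nothing beyond this careful accounting of effective error terms is needed, since the shape of the leading asymptotic is already dictated by Theorem \ref{approxiamtion-gn}.
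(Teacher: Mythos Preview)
Your plan is essentially the same as the paper's: take the explicit intermediate estimates already computed inside the proof of Theorem~\ref{approxiamtion-gn} (the main Bessel term $g_{11}$ and the explicit majorants $g_{12}^u,g_{13}^u,g_{21},g_{22}^u,g_{23}^u,g_{31}^u$), insert the quantitative large-argument bound for $I_{-5}$, and check numerically that everything fits inside the $0.99/1.01$ window once $n\geq(4.63r)^9$.

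One small correction to your account of where the errors live. The $E_1$ bound of Theorem~\ref{Phi-aM} only enters indirectly, via Theorem~\ref{lem-Gq-bound}, to control the \emph{minor-arc} contribution $g_3$. The error at the dominant pole $q=1$ --- which is the one that actually multiplies the leading Bessel term --- is the $E_+$ of Theorem~\ref{approximation-near-pole}, with the explicit bound $|E_+|\leq 7.02\,r^{3/4}X^{-3/4}$, giving $0.999\leq e^{E_+}\leq 1.002$ in the stated range. The paper then checks that each of $g_{12}^u,g_{22}^u,g_{31}^u$ is below $10^{-6}$ times the main term, so the only nontrivial bookkeeping is combining the Bessel bound (your first step) with the $e^{E_+}$ factor. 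Your reference to ``the error hidden inside the $\sim$\,\ldots\ traces back to Theorem~\ref{Phi-aM}'' would send a reader to the wrong place; point instead to Theorem~\ref{approximation-near-pole} for the major-arc error and to Theorem~\ref{lem-Gq-bound} for the minor arcs.
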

	
	Finally, by analyzing the convolutions of the series as given in Theorems \ref{main-thm}--\ref{subthm-2nmids} and $G_{s,r}(q)$ with coefficients satisfying the boundary conditions in Theorem \ref{bound-gast}, respectively, we derive Theorem \ref{thm-large-n} and confirm Conjecture \ref{stronger-ja-2} for sufficiently large $n$.
	
	This paper is organized as follows. In Section \ref{preliminaries}, we shall introduce some basic definitions and notations on $q$-series and Bessel integrals. In Section \ref{coprime} and Section \ref{non-coprime}, we aim to prove Theorems \ref{main-thm}--\ref{subthm-2nmids} with coprime and non coprime parameters, respectively. In Section \ref{Nonmodular}, we discuss the nonmodular infinite product, together with the bounds on minor arcs of $ G_{a,M}(q)$ and its approximations  near the poles. Section \ref{pf-large-n} is mainly concerned with the derivation of the  asymptotic formula for $g_{a,M}(n)$ by using the circle method and the proof of Theorem \ref{thm-large-n}. In Section \ref{conclusion}, we give some remarks based on our method.  The appendix lists some verbose constant terms omitted  in Section \ref{coprime} and \ref{non-coprime}.
	
	\section{Preliminaries}\label{preliminaries}
	
	Throughout this paper, we adopt  standard notations and terminologies for $q$-series, see,  for example, \cite{Basic}. We assume that $|q|<1$.
	The $q$-shifted factorial is denoted by
	\[
	(a)_n=(a;q)_n=\begin{cases}
		1, & \text{\it if $n=0$}, \\[2mm]
		(1-a)(1-aq)\cdots(1-aq^{n-1}), & \text{\it if $n\geq 1$}.
	\end{cases}
	\]
	We also use the notation
	\[
	(a)_\infty=(a;q)_\infty=\prod_{n=0}^\infty (1-aq^n).
	\]
	There are more compact notations for the multiple $q$-shifted factorials
	\begin{align*}
		(a_1,a_2,\dots,a_m;q)_n&=(a_1;q)_n(a_2;q)_n \cdots(a_m;q)_n,\\
		(a_1,a_2,\dots,a_m;q)_{\infty}&=(a_1;q)_{\infty}(a_2;q)_{\infty}\cdots
		(a_m;q)_{\infty}.
	\end{align*}

	The well-known Jacobi triple product identity is given by \cite[II.28]{Basic}
	\begin{equation}\label{JTPI}
		\sum_{k=-\infty}^\infty (-1)^k q^{k\choose 2} a^k =(a,q/a,q;q)_\infty.
	\end{equation}
	
	Truncated theta series are closely related to integer partitions. Recall that a partition of a nonnegative integer $n$ is a finite nonincreasing sequence of positive integers $\lambda_1,\lambda_2,\ldots, \lambda_k$ such that $\sum_{i=1}^k \lambda_i=n$, see, for example, \cite{Partitions}. Let $p(n)$ denote  the number of integer partitions of $n$ with $p(0)=1$, which has the generating function
	\[
	\sum_{n=0}^\infty p(n)q^n=\frac{1}{(q;q)_\infty}.
	\] We then reveal the following important result given by Polya and Szeg\"{o}.
	
	\begin{lem}[{\cite[Problem 27.1]{Polya}}] \label{lem-coprime}
		Let $a_{1}, a_{2},\ldots, a_{\ell} $ be positive integers with $ (a_{i},a_{j})=1 $ when $ i\neq j $, then
		\begin{align}\label{lem-coprime-expression}
			\frac{1}{(1-q^{a_{1}})(1-q^{a_{2}})\cdots(1-q^{a_{\ell}})}=R\Big(\frac{1}{1-q}\Big)+\frac{S(q)}{1-q^{a_{1}a_{2}\cdots a_{\ell}}},
		\end{align}
		where $ R(x)$ and $ S(x) $ are polynomials of $x$ with degree  $\ell-1$ and smaller than $  a_{1} a_{2}\cdots a_{\ell} $, respectively.
	\end{lem}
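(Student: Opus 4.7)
The plan is to derive the decomposition by polynomial partial fractions on the rational function $f(q) := 1 / \prod_{i=1}^{\ell} (1 - q^{a_i})$. Setting $A := a_1 \cdots a_\ell$ and factoring $1 - q^{a_i} = (1-q)(1 + q + \cdots + q^{a_i-1})$, I would write $\prod_i(1-q^{a_i}) = (1-q)^\ell \pi(q)$ with $\pi(q) := \prod_{i=1}^{\ell}(1 + q + \cdots + q^{a_i - 1})$. The key preliminary observation is that $\pi(1) = \prod_i a_i = A \neq 0$, so $\gcd\bigl(\pi(q), (1-q)^\ell\bigr) = 1$ in $\mathbb{Q}[q]$, which sets up a polynomial-coprimality argument rather than a residue calculation.

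By Bezout's identity in $\mathbb{Q}[q]$, there exist polynomials $T, V$ with $\deg T < \ell$ such that $T(q)\pi(q) + (1-q)^\ell V(q) = 1$. Dividing by $(1-q)^\ell \pi(q)$ yields the split
\[
\frac{1}{(1-q)^\ell \pi(q)} \;=\; \frac{T(q)}{(1-q)^\ell} \;+\; \frac{V(q)}{\pi(q)}.
\]
Expanding $T(q) = \sum_{j=0}^{\ell-1} t_j (1-q)^j$ in powers of $1-q$ rewrites the first summand as $\sum_{j=0}^{\ell-1} t_j / (1-q)^{\ell-j}$, which is a polynomial in $1/(1-q)$ of the stated degree; this is taken to be $R\bigl(1/(1-q)\bigr)$. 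For the second summand, the crucial step is the cyclotomic divisibility $\pi(q) \mid (1 + q + \cdots + q^{A-1})$. Using $1 + q + \cdots + q^{m-1} = \prod_{d \mid m,\, d>1} \Phi_d(q)$ together with the pairwise coprimality $(a_i, a_j) = 1$ (so that any common factor $\Phi_d$ with $d > 1$ between $1 + q + \cdots + q^{a_i-1}$ and $1 + q + \cdots + q^{a_j - 1}$ would force $d \mid (a_i, a_j) = 1$, a contradiction), the product $\pi(q)$ is a sub-product of the cyclotomic factorization of $1 + q + \cdots + q^{A-1}$. Writing $1 - q^A = (1-q) \pi(q) W(q)$ for some $W \in \mathbb{Z}[q]$, I get $V(q)/\pi(q) = V(q)(1-q)W(q)/(1 - q^A) =: S(q)/(1-q^A)$.

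The last check is the degree bound $\deg S < A$: from Bezout one has $\deg V < \sum_i a_i - \ell$, and the factorization of $1 - q^A$ gives $\deg W = A - 1 - \sum_i a_i + \ell$, so $\deg S = 1 + \deg V + \deg W < A$. The main obstacle in the argument is the cyclotomic divisibility step, which is precisely where pairwise coprimality of the $a_i$ is indispensable; without it, cyclotomic factors $\Phi_d$ would repeat across different indices $i$ and spoil the divisibility. Everything else reduces to a Bezout/partial-fraction setup and routine polynomial-degree bookkeeping.
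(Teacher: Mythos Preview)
Your argument is correct and complete; the paper does not supply its own proof of this lemma (it is quoted from P\'olya--Szeg\H{o}), so there is nothing to compare against. The Bezout/partial-fraction route you take, together with the cyclotomic divisibility $\pi(q)\mid 1+q+\cdots+q^{A-1}$ (which is exactly where pairwise coprimality is used), is the standard way to establish this decomposition, and your degree bookkeeping for $S$ is right.

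One small caveat: your sentence ``which is a polynomial in $1/(1-q)$ of the stated degree'' is not quite accurate. With $x=1/(1-q)$ you get
\[
\frac{T(q)}{(1-q)^\ell}=\sum_{j=0}^{\ell-1} t_j\,x^{\ell-j}=t_0 x^{\ell}+t_1 x^{\ell-1}+\cdots+t_{\ell-1}x,
\]
and $t_0=T(1)=1/A\neq 0$ (evaluate the Bezout identity at $q=1$). So $R$ has degree $\ell$, not $\ell-1$. This is consistent with how the paper itself uses the lemma: for $\ell=4$ it writes $\sum_{i=1}^{4} c_i/(1-q)^{5-i}$, i.e.\ $R(x)=c_1x^4+\cdots+c_4x$, degree $4$. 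In other words, the ``degree $\ell-1$'' in the lemma statement appears to be a typo for ``degree $\ell$'' (or ``degree at most $\ell$, with no constant term''); your construction gives the correct object, you just should not claim it matches the stated $\ell-1$.
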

	Let $ p_{\ell}(n) $ be the number of partitions with coprime numbers $a_{1}, a_{2},\ldots, a_{\ell} $ as parts, then the generating function of  $ p_{\ell}(n) $ can be stated as follows
	\begin{align}\label{genarting-function}
		\sum_{n=0}^{\infty}p_{\ell}(n)=\frac{1}{(1-q^{a_{1}})(1-q^{a_{2}})\cdots(1-q^{a_{\ell}})}.
	\end{align}
	By employing Lemma \ref{lem-coprime} with $\ell=2$, it leads to the following result. 	
	\begin{lem}[{\cite[Problem 26]{Polya}}]\label{lem-case-has-2parts}
		We have
		$$ p_{2}(n)= \left\lfloor\frac{n}{a_{1}a_{2}}\right\rfloor \text{ or }\   p_{2}(n)= \left\lfloor\frac{n}{a_{1}a_{2}} \right\rfloor+1,$$
		when $ 	\lfloor a \rfloor $ denotes the greatest integer no more than $ a. $
	\end{lem}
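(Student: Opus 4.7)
The plan is to interpret $p_{2}(n)$ as a one-dimensional lattice-point count and then to close the argument using the elementary principle that a closed real interval of length $L \ge 0$ contains either $\lfloor L \rfloor$ or $\lfloor L \rfloor + 1$ integers.

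First, I would read off from \eqref{genarting-function} with $\ell = 2$ that $p_{2}(n)$ equals the number of nonnegative integer solutions $(x, y)$ of $a_{1} x + a_{2} y = n$. Since $(a_{1}, a_{2}) = 1$, Bezout supplies a particular integer solution $(x_{0}, y_{0})$, and every integer solution then takes the form $(x_{0} + a_{2} t,\, y_{0} - a_{1} t)$ with $t \in \mathbb{Z}$. The nonnegativity conditions $x_{0} + a_{2} t \ge 0$ and $y_{0} - a_{1} t \ge 0$ are therefore equivalent to $t$ lying in the closed interval $I := [-x_{0}/a_{2},\, y_{0}/a_{1}]$, so that $p_{2}(n) = \#(I \cap \mathbb{Z})$.

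Next, a direct calculation would give the length of $I$ as
$$
\frac{y_{0}}{a_{1}} + \frac{x_{0}}{a_{2}} = \frac{a_{1} x_{0} + a_{2} y_{0}}{a_{1} a_{2}} = \frac{n}{a_{1} a_{2}}.
$$
Applying the counting principle to $I$ would then force $p_{2}(n) \in \bigl\{\lfloor n/(a_{1} a_{2}) \rfloor,\, \lfloor n/(a_{1} a_{2}) \rfloor + 1\bigr\}$, proving the lemma.

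There is no substantial obstacle; the only subtle point is the boundary case when $a_{1} a_{2} \mid n$, for which the length of $I$ is an integer and both endpoints may simultaneously be lattice points, so that $I$ contains $n/(a_{1} a_{2}) + 1$ integers (e.g.\ for $n = k a_{1} a_{2}$ and $(x_{0}, y_{0}) = (k a_{2}, 0)$, one has $I = [-k, 0]$, which contains exactly $k+1$ integers). A more analytic alternative would be to invoke Lemma \ref{lem-coprime} with $\ell = 2$, match the asymptotics $p_{2}(n) \sim n/(a_{1} a_{2})$ to fix the leading coefficient of $R$, and verify that the period-$a_{1} a_{2}$ fluctuation of $S(q)/(1 - q^{a_{1} a_{2}})$ contributes at most one extra unit; I prefer the combinatorial route above since it avoids tracking partial-fraction data.
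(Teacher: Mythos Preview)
Your argument is correct: the Bezout parametrization reduces $p_{2}(n)$ to counting integers in a closed interval of length $n/(a_{1}a_{2})$, and the elementary bound on lattice points in an interval finishes it cleanly. The boundary case you flag is handled correctly.

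The paper does not actually write out a proof of this lemma; it simply cites Polya--Szeg\H{o} and remarks that the result follows from Lemma~\ref{lem-coprime} with $\ell=2$. That partial-fraction route---which you mention as your alternative---would express $p_{2}(n)$ as a linear term $n/(a_{1}a_{2})$ plus a constant plus a periodic correction of period $a_{1}a_{2}$, and one then checks the oscillation is confined to $\{0,1\}$ after rounding. Your primary approach is more direct and self-contained, avoiding any need to track the coefficients in the decomposition; the paper's suggested route has the advantage of fitting into the same framework used later for $\ell=3,4$ (Proposition~\ref{poly-part} and Lemmas~\ref{limit-C(n)}, \ref{lem-sub-2|s}), where the polynomial-plus-periodic structure is exploited repeatedly. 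Either is perfectly adequate here.
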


	Next, we recall some basic notation and properties which will be applied in this paper.  The symbol $(a_{1}, a_{2},\ldots, a_{\ell}) $ is used to denote the greatest common divisor of $a_{1}, a_{2},\ldots, a_{\ell} $. Let $ n\equiv a $  denote the least positive residue of $ n $ modulo $a$. For example, if $ n=25 $, $ a=7 $, then  $25\equiv7=4$. Let 	$ \textnormal{sup}\{f(n)\}_{0\leq n\leq M} $ and $ \textnormal{inf}\{f(n)\}_{0\leq n\leq M} $ denote supremum and infimum of the set $\left\{ f(n) \right\}$ for $ n=0,\ldots, M$, respectively. As usual, $ \Re(z) $ denotes the real part of the complex number $ z $. 	
	Let    $A \sim \mathbb{O}(x) $ stand for an expression $ A  $ satisfying $ |A| \leq x $. 	Denote by $ f(x)\ll g(x) $ the Vinogradov notation, which means that there exists a positive constant $ C $   satisfying $ |f(x)|\leq C|g(x)|$. If $ C $ depends on $y$, we write $ |f(x)|\ll_{y}  C g(x) $.
	
	Recall that the Gamma function $\Gamma(x)$  \cite{ArtinGamma} is given by \[ \Gamma(x)=\int_{0}^\infty t^{x-1}e^{-t} d t, \] where $\Re(x)>0$. It satisfies the reflection formula \cite[p. 250, eq. (3)]{Apostol}
	\begin{align}\label{Gamma-use}
		\Gamma(x)\Gamma(1-x) =\frac{\pi}{\sin (\pi x)}.
	\end{align}
	It is also known that $\Gamma(n)=(n-1)!$ for a positive integer $n\geq 1$ and so $\Gamma(1)=1$.
	
	The well-known Cauchy's integral formula states that
	\begin{align}
		f\left(z_0\right)=\frac{1}{2 \pi i} \oint_C \frac{f(z) d z}{z-z_0},
	\end{align}
	where  $C$ is a circle, oriented counterclockwise, and forms the boundary of   the closed disk  $  {\bigl \{}z:|z-z_{0}|\leq r{\bigr \}} $.	Moreover, the residue theorem \cite[Chapter 3]{Lang} can be stated as follows. When $f(z)$ is analytic except for the singularities $ z_{1},z_2,\ldots, z_{k} $, then
	\begin{align}\label{Cauchy}
		\frac{1}{2 \pi i} \oint_C f(z) d z=\sum_{i=1}^{k}\underset{z=z_i}{\operatorname{Res}} f(z),
	\end{align}
	where $\underset{z=z_i}{\operatorname{Res}} f(z) $ is the residue of $f(z)$ at the singularity $z_i$,
	
	Let $ \zeta(s) $ and $ \zeta(s,a) $ denote Riemann and Hurwitz zeta functions and $ \zeta^{\prime}(s,a) $ or $ \frac{d}{d s} \zeta (s,a) $ to denote the partial derivative of Hurwitz zeta function on $ s $ \cite{Adamchik1998}, where
	\begin{align*}
		\zeta(s) =\sum_{k=1}^{\infty}\frac{1}{k^{s}}\quad \text{and}\quad \zeta(s,a) =\sum_{k=0}^{\infty}\frac{1}{(k+a)^{s}}.
	\end{align*}
	Note that it is known that both $\zeta(s)$ and $\zeta(s,a)$ have a simple pole with residue one at $s=1$.  Moreover, $\zeta(0)=-1/2$,  $\zeta(2)=\pi^2/6$, and Hurwitz zeta function satisfies the following relations
	\begin{align}
		&  \zeta(0,a)=\frac{1}{2}-a, \label{special-1} \\
		& \frac{d}{d s} \zeta(s, a)|_{s=0}=\ln (\Gamma(a))-\frac{1}{2} \ln (2 \pi),\label{special-2} \\
		& \frac{d}{d s} \zeta(s,0)|_{s=0}=-\frac{1}{2} \ln (2 \pi), \label{special-3}
	\end{align}
	where first equation can be found in \cite{Apostol} and the last two ones lie in \cite{Bailey2007}.
	From \cite[Theorem 12.8]{Apostol}, there is a  functional equation  satisfied by $\zeta(s,a)$
	\begin{align}\label{Hurwitz}
		\zeta\Big(s, \frac{\lambda}{\kappa}\Big)= & 2 \Gamma(1-s)(2 \pi \kappa)^{s-1}\Big(\sin \frac{\pi s}{2} \sum_{1 \leq \nu \leq \kappa} \cos \frac{2 \pi \lambda \nu}{\kappa} \zeta\big(1-s, \frac{\nu}{\kappa}\big)  \notag\\
		&  +\cos \frac{\pi s}{2} \sum_{1 \leq \nu \leq \kappa} \sin \frac{2 \pi \lambda \nu}{\kappa} \zeta\big(1-s, \frac{\nu}{\kappa}\big)\Big).
	\end{align}
	
	Hurwitz zeta functions satisfy the following two summation formulas, which can be seen in \cite{Blagouchine}.
	\begin{lem}
		For any $\theta=1,2, \ldots, k$,
		\begin{align}
			&\sum_{1 \leq \alpha \leq k} \cos \frac{2 \pi \alpha \theta}{k} \zeta\left(0, \frac{\alpha}{k}\right)=-\frac{1}{2},\label{cos-zata-0}\\
			&	\sum_{1 \leq \alpha \leq k} \cos \frac{2 \pi \alpha \theta}{k} \zeta\left(2, \frac{\alpha}{k}\right)=\frac{\pi^2}{6}\left(6 \theta^2-6 k \theta+k^2\right).\label{cos-zata-2}
		\end{align}
	\end{lem}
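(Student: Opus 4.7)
The plan is to prove the two summation identities \eqref{cos-zata-0} and \eqref{cos-zata-2} by reducing them to elementary trigonometric sums and a classical Fourier series, which sidesteps the full functional equation \eqref{Hurwitz} (at $s=2$ the factor $\Gamma(1-s)\sin(\pi s/2)$ would produce a delicate $0\cdot\infty$ limit).

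For \eqref{cos-zata-0}, I would substitute the special value $\zeta(0,a)=\tfrac{1}{2}-a$ from \eqref{special-1}, so the left-hand side splits as
\[
\tfrac{1}{2}\sum_{\alpha=1}^{k}\cos\tfrac{2\pi\alpha\theta}{k}-\tfrac{1}{k}\sum_{\alpha=1}^{k}\alpha\cos\tfrac{2\pi\alpha\theta}{k}.
\]
Both pieces are real parts of standard sums over the $k$-th roots of unity, expressed through $z=e^{2\pi i\theta/k}$. For $1\le\theta\le k-1$ the first sum vanishes, while the second equals $k/2$ via $\sum_{\alpha=1}^{k}\alpha z^{\alpha}=kz/(z-1)$ (obtained by differentiating the geometric series and using $z^{k}=1$) combined with $\Re\bigl(z/(z-1)\bigr)=\tfrac{1}{2}$. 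The boundary case $\theta=k$ yields $k$ and $k(k+1)/2$ respectively, and in every case the combination collapses to $-\tfrac{1}{2}$.

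For \eqref{cos-zata-2}, I would expand $\zeta(2,\alpha/k)=k^{2}\sum_{j\ge 0}(jk+\alpha)^{-2}$, interchange summations (justified by absolute convergence), and reindex $n=jk+\alpha$. Using the periodicity $\cos\tfrac{2\pi n\theta}{k}=\cos\tfrac{2\pi\alpha\theta}{k}$ whenever $n\equiv\alpha\pmod k$, the double sum collapses into
\[
k^{2}\sum_{n=1}^{\infty}\frac{\cos(2\pi n\theta/k)}{n^{2}}.
\]
At this point I would invoke the classical Fourier series for the second Bernoulli polynomial, namely $\sum_{n\ge 1}\cos(nx)/n^{2}=\pi^{2}\bigl((x/(2\pi))^{2}-x/(2\pi)+\tfrac{1}{6}\bigr)$ on $[0,2\pi]$. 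Substituting $x=2\pi\theta/k$ and multiplying by $k^{2}$ produces exactly $\tfrac{\pi^{2}}{6}(6\theta^{2}-6k\theta+k^{2})$; the endpoint $\theta=k$ reduces to $k^{2}\zeta(2)=\pi^{2}k^{2}/6$, consistent with the formula.

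The only (mild) technical point is handling the $\theta=k$ boundary in the Fourier expansion, which matches either by continuity of $B_{2}$ or by direct evaluation via $\zeta(2)$. An alternative route would apply \eqref{Hurwitz} directly (trivially at $s=0$; through a careful $s\to 2$ limit for the second), but the approach above is self-contained once \eqref{special-1} is available and avoids the cancellation in \eqref{Hurwitz} at $s=2$.
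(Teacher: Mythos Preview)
Your argument is correct: for \eqref{cos-zata-0} the reduction via \eqref{special-1} and the root-of-unity computation $\sum_{\alpha=1}^{k}\alpha z^{\alpha}=kz/(z-1)$ with $\Re(z/(z-1))=\tfrac12$ goes through cleanly (including the $\theta=k$ case), and for \eqref{cos-zata-2} the reindexing $n=jk+\alpha$ together with the Bernoulli--Fourier identity $\sum_{n\ge1}\cos(2\pi n t)/n^{2}=\pi^{2}B_{2}(t)$ on $[0,1]$ yields exactly the stated polynomial.

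By way of comparison, the paper does not actually prove this lemma: it is quoted from Blagouchine \cite{Blagouchine} as a known summation formula and used as a black box. Your proposal therefore supplies a genuinely self-contained elementary proof where the paper offers only a citation. The advantage of your route is that it relies on nothing beyond \eqref{special-1}, a geometric-series differentiation, and the classical Fourier expansion of $B_{2}$, so it is entirely internal to the preliminaries already collected in Section~\ref{preliminaries}; the advantage of the paper's approach is brevity, since the identities are standard and a reference suffices for the paper's purposes. Your remark that applying \eqref{Hurwitz} directly at $s=2$ would require a $0\cdot\infty$ limit is also well taken; your method sidesteps that entirely.
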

	Based on \cite[Theorem 12.23]{Apostol} and some elementary calculations, the following result holds.
	\begin{lem}
		For $ s=\sigma+it $ and $ |t|\geq 5 $, we take $ N = 1 + \lfloor|t |\rfloor$ then have
		\begin{align}\label{abs-zeta-geq5}
			|\zeta(s, \alpha)|&\leq1+\frac{(N+\alpha)^{1-\sigma}}{1-\sigma}+\frac{(N+\alpha)^{1-\sigma}}{|s-1|}+\frac{1}{2}|s| \Big(1+\frac{20}{3}N^{\delta}+(1+N)^{\delta}\Big) \notag\\
			& \quad\  +\frac{1}{2}|s|\Big(1+\frac{(N+\alpha)^{-\sigma}}{-\sigma}\Big)+\frac{1}{2}|s||s+1| \frac{(N+\alpha)^{-\sigma-1}}{1+\sigma}+ |a^{-s}|.
		\end{align}
	\end{lem}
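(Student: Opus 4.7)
The plan is to derive \eqref{abs-zeta-geq5} as an explicit quantitative version of Apostol's Theorem 12.23, which provides an Euler--Maclaurin type representation of the Hurwitz zeta function valid for all $s$ in a suitable strip. Writing $s = \sigma + it$ and fixing $N = 1 + \lfloor |t| \rfloor$, I would start from the representation schematically of the form
\begin{equation*}
\zeta(s,\alpha) = \alpha^{-s} + \sum_{n=1}^{N}(n+\alpha)^{-s} + \frac{(N+\alpha)^{1-s}}{s-1} - \frac{1}{2}(N+\alpha)^{-s} - s\int_{N}^{\infty}\frac{P_1(x)}{(x+\alpha)^{s+1}}\,dx,
\end{equation*}
where $P_1(x) = \{x\}-1/2$, and then iterate the integration by parts once more to introduce a Bernoulli-polynomial remainder of order two that will produce the $|s||s+1|$-factor.

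Next, I would bound the six resulting groups of terms separately, matching them one by one with the six summands on the right-hand side of \eqref{abs-zeta-geq5}. The initial $\alpha^{-s}$ contributes $|\alpha^{-s}|$; applying the triangle inequality to $\sum_{n=1}^{N}(n+\alpha)^{-s}$ and comparing with $\int_0^{N}(x+\alpha)^{-\sigma}\,dx$ yields $1+\frac{(N+\alpha)^{1-\sigma}}{1-\sigma}$ after absorbing boundary terms; the polar contribution $\frac{(N+\alpha)^{1-s}}{s-1}$ is handled by the triangle inequality to give $\frac{(N+\alpha)^{1-\sigma}}{|s-1|}$. The Euler--Maclaurin correction $-\frac{1}{2}(N+\alpha)^{-s}$ together with the first integral remainder, bounded by $|P_1(x)| \le 1/2$ and a change of variables, produces the $\frac{|s|}{2}$-factor combining into $1+\frac{(N+\alpha)^{-\sigma}}{-\sigma}$, while the second-order Bernoulli remainder, bounded by $|B_2(\{x\})|\le 1/6$, gives the $\frac{|s||s+1|}{2}\cdot\frac{(N+\alpha)^{-\sigma-1}}{1+\sigma}$ piece. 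The auxiliary factors $1+\frac{20}{3}N^{\delta}+(1+N)^{\delta}$ arise from carefully comparing the tail integral on $[N,\infty)$ with the partial-sum boundary contribution on $[0,N]$, using the hypothesis $|t|\ge 5$ (so $N \ge 6$) to ensure uniform control of the cross-terms in terms of a small auxiliary exponent $\delta$.

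The main obstacle is purely bookkeeping: tracking the precise constants and exponents so that each Euler--Maclaurin ingredient lands exactly on one of the six summands of \eqref{abs-zeta-geq5}, with the combinatorial coefficients $1$, $\frac{20}{3}$, and the shifted exponent $(1+N)^{\delta}$ appearing in the right places. Since no substantially new analytic input beyond Apostol's machinery is required, the work is reduced to a careful and uniform quantification of the classical argument, exploiting $|t|\ge 5$ as the precise threshold that makes these explicit constants admissible.
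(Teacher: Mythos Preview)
Your approach is essentially the same as the paper's: the paper does not supply a proof at all, stating only that the bound follows ``based on \cite[Theorem 12.23]{Apostol} and some elementary calculations,'' and your Euler--Maclaurin outline is precisely the elaboration of that reference. One small caution: the symbol $\delta$ in the displayed bound is never defined in the paper, so before doing the bookkeeping you should fix its intended meaning (presumably a specific exponent such as $-\sigma$ arising from $|(n+\alpha)^{-s}|=(n+\alpha)^{-\sigma}$), rather than treating it as a free auxiliary parameter.
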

	
	For $ 0<\alpha<1 $, it can be seen that \cite[(3.5)]{Chern}
	\begin{align}\label{abs-zeta-leq5}
		|\zeta(-3/4+i t, \alpha)| \leq \frac{2 \Gamma\left(\frac{7}{4}\right) \zeta\left(\frac{7}{4}\right) \cosh \big(\frac{\pi|t|}{2}\big)}{(2 \pi)^{\frac{7}{4}}}.
	\end{align}
	
	The Mellin transformation \cite{Galambos} on a function $f(x)$ and its inversion form are defined as follows
	\begin{align}
		&\varphi(s)=\{ {\mathcal{M}} f\}(s)=\int_0^{\infty} x^{s-1} f(x) d x, \notag\\
		&f(x)=\{ {\mathcal{M}}^{-1} \varphi\}(x)=\frac{1}{2 \pi i} \int_{c-i \infty}^{c+i \infty} x^{-s} \varphi(s) d s=:\frac{1}{2 \pi i} \int_{(c)}  x^{-s} \varphi(s) d s.\label{Mellin-2}
	\end{align}
	
	The first modified Bessel function \cite[p.269]{Bessel} is given by
	\begin{align}\label{Bessel}
		I_v(z)&:=\frac{1}{2 \pi i} \int_{C} s^{-v-1} \mathrm{exp}\Big(  \frac{z}{2}\big(s+\frac{1}{s}\big) \Big)ds\notag \\
		&=\frac{e^z}{(2 \pi z)^{1 / 2}}\left(1+\delta_{1}\right)-i e^{-v \pi i} \frac{e^{-z}}{(2 \pi z)^{1 / 2}}\left(1+\gamma_{1}\right),
	\end{align}
	where $ C $ is the Hankel contour and
	\begin{align*}
		&|\gamma_{n}|\leq 	2 \exp \Big\{\Big|\Big(v^2-\frac{1}{4}\Big)   z^{-1}\Big|\Big\}\Big|A_n(v) z^{-n}\Big|,\\
		&|\delta_{n}|\leq 	2\pi \exp \Big\{\Big|\Big(v^2-\frac{1}{4}\Big) z^{-1}\Big|\Big\}\Big|A_n(v) z^{-n}\Big| ,\\
		&	A_s(v)=\frac{(4 v^2-1^2)(4 v^2-3^2) \cdots( 4v^2-(2 s-1)^2)}{s!8^s}.
	\end{align*}
	
	\section{Proof of the coprime case: Theorem \ref{main-thm}}\label{coprime}
	
	In this section, we are mainly concerned with the proof of Theorem \ref{main-thm}. Firstly, we consider the calculation of   $ p_{\ell}(n)$ for   $\ell=4$ and $ a_{i} $ being replaced by  $s,r-s,r+s,2r-s$ for $1\leq i\leq 4$, respectively, which follows that
	\[
	\sum_{n=0}^{\infty}p_{4}(n)q^{n}=\frac{1}{(1-q^{s})(1-q^{r-s})(1-q^{r+s})(1-q^{2r-s})}.
	\]
	Based on the residue theorem \eqref{Cauchy} and Lemma \ref{lem-coprime}, we obtain the following expression for $p_4(n)$.
	
	\begin{prop}\label{poly-part} For $p_4(n)$ defined as above, we have that
		\begin{align} \label{poly-parti}
			p_{4}(n)=P_{3}(n)+C_{4}+Q(n\equiv\Delta),
		\end{align}
		where \begin{align}\label{P3}
			P_{3}(n)=\frac{n^3+6 n^2 r+n \left(21 r^2+2 r s-2 s^2\right)/2}{6\Delta},
		\end{align}
		$ C_{4} $ is a constant related    to $ r$ and  $s $, and $Q(n)$ is a periodic function with period of  $ \Delta=s(r-s)(r+s)(2r-s )$.
	\end{prop}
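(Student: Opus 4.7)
The approach is to apply the residue theorem together with partial-fraction decomposition (in the spirit of Lemma \ref{lem-coprime}) to
\[
F(q)=\sum_{n\geq 0}p_{4}(n)q^{n}=\frac{1}{(1-q^{s})(1-q^{r-s})(1-q^{r+s})(1-q^{2r-s})}.
\]
The four exponents $s,r-s,r+s,2r-s$ need not be pairwise coprime, but since $(s,r-s,r+s,2r-s)=1$ the point $q=1$ is a pole of $F$ of order exactly four, while every other pole lies at a root of unity whose order divides $\operatorname{lcm}(s,r-s,r+s,2r-s)$ and, in particular, divides $\Delta=s(r-s)(r+s)(2r-s)$. Collecting residues, or equivalently the principal part at $q=1$, I would write
\[
F(q)=\frac{A_{3}}{(1-q)^{4}}+\frac{A_{2}}{(1-q)^{3}}+\frac{A_{1}}{(1-q)^{2}}+\frac{A_{0}}{1-q}+R(q),
\]
where $R(q)$ is a proper rational function, regular at $q=1$, whose denominator divides $1-q^{\Delta}$.

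\textbf{Cubic polynomial part.} Using $\frac{1}{(1-q)^{k+1}}=\sum_{n\geq 0}\binom{n+k}{k}q^{n}$, the four principal-part terms contribute
\[
A_{3}\binom{n+3}{3}+A_{2}\binom{n+2}{2}+A_{1}(n+1)+A_{0},
\]
a cubic polynomial in $n$. To extract the $A_{j}$ I would Taylor-expand
\[
H(q):=(1-q)^{4}F(q)=\prod_{a\in\{s,r-s,r+s,2r-s\}}\frac{1-q}{1-q^{a}}
\]
at $q=1$. Setting $u=1-q$ and using the basic expansion $\frac{1-q}{1-q^{a}}=\frac{1}{a}\bigl(1+\tfrac{a-1}{2}u+\tfrac{a^{2}-1}{12}u^{2}+O(u^{3})\bigr)$, the Taylor coefficients of $H$ become elementary symmetric polynomials in $(a-1)/2$ and $(a^{2}-1)/12$ summed over the four exponents. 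With $\sum a=4r$ and $\sum a^{2}=6r^{2}+4s^{2}-4rs$, one finds $A_{3}=1/\Delta$ and $A_{2}=2(r-1)/\Delta$, and the combination above collapses to exactly
\[
P_{3}(n)=\frac{n^{3}+6n^{2}r+n(21r^{2}+2rs-2s^{2})/2}{6\Delta},
\]
with the constant contributions from all four terms absorbed into $C_{4}:=A_{0}+A_{1}+A_{2}+A_{3}$, which depends only on $r$ and $s$.

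\textbf{Periodic part.} The remainder $R(q)$ is a proper rational function whose poles lie at non-trivial roots of unity of order dividing $\Delta$. Expanding each partial-fraction piece as a geometric series produces a power series whose $n$-th coefficient is a $\Delta$-periodic function of $n$; this is the $Q(n\equiv\Delta)$ appearing in the statement.

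\textbf{Main obstacle.} The only delicate step is the symmetric-function bookkeeping needed to compute the coefficient of $n$ in the polynomial part: after forming $\tfrac{11}{6}A_{3}+\tfrac{3}{2}A_{2}+A_{1}$ one must verify that the pure-$r$ and constant contributions cancel, leaving precisely $(21r^{2}+2rs-2s^{2})/(12\Delta)$. This is a routine but error-prone expansion; no new idea beyond Taylor expansion and partial fractions is required.
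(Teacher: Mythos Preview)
Your approach coincides with the paper's: both isolate the order-four pole at $q=1$ (the paper via Lemma~\ref{lem-coprime} and residue formulas for the $c_i$, you via direct Taylor expansion of $(1-q)^4F(q)$) and read off the cubic polynomial from the principal part, leaving a remainder whose coefficients are $\Delta$-periodic. Your symmetric-function bookkeeping for the $n$-coefficient checks out and matches the paper's $P_3(n)$.

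One point to tighten: you say the four exponents ``need not be pairwise coprime'' and then assert that the denominator of $R(q)$ divides $1-q^{\Delta}$. That inference requires every pole of $F$ away from $q=1$ to be \emph{simple}; if two exponents share a prime factor $d>1$, then $F$ has a pole of order $\geq 2$ at a primitive $d$-th root of unity, and the corresponding partial-fraction piece contributes a term of the shape $n\zeta^{-n}$ to the coefficients, which is not periodic. Proposition~\ref{poly-part} sits in Section~\ref{coprime}, where (despite the paper's gcd-style notation) the four parts are in fact taken pairwise coprime --- this is exactly what the paper's invocation of Lemma~\ref{lem-coprime} requires as well, and the non-pairwise-coprime cases are handled separately in Section~\ref{non-coprime} with only three factors. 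Under that standing hypothesis your argument and the paper's are both valid, but the proposition as you have phrased it (allowing non-pairwise-coprime exponents) would actually be false.
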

	\begin{proof}
		When $ \ell=4 $, from \eqref{lem-coprime-expression} and (\ref{genarting-function}), it follows that
		\begin{equation} \label{gf-p4rt}
			\prod_{i=1}^{4}\frac{1}{1-q^{a_{i}}}=\sum_{i=1}^{4}\frac{c_{i}}{(1-q)^{5-i}}+\sum_{i=0}^{a_{1}a_{2}a_{3} a_{4}-1}\frac{b_{i}q^{i}}{1-q^{a_{1}a_{2}a_{3} a_{4}}},
		\end{equation}
		where $ c_{i} $ and $b_i$ are constants.
		By using the residue theorem \eqref{Cauchy},  we have that for $ i=1,2,3 $,
		\begin{align*}
			c_{i}=\frac{(-1)^{i-1}}{(i-1)!}\lim_{q\to1} \frac{\textnormal{d}^{i-1}}{\textnormal{d}q^{i-1}} \Big(\frac{(1-q)^{4}}{\prod_{i=1}^{4}(1-q^{a_{i}})}\Big).
		\end{align*}
		Thus we obtain that
		\[ \begin{aligned}
			&c_{1}=\frac{1}{a_{1}a_{2}a_{3}a_{4}}\\[3pt]		&c_{2}=\frac{{a_{1}+a_{2}+a_{3}+a_{4}-4}}{2a_{1}a_{2}a_{3}a_{4}}\\[3pt]
			&c_{3}=\frac{1}{12a_{1}a_{2}a_{3} a_{4}}\Big(\sum_{i=1}^{4}a_{i}(a_{i}-9)-\frac{3}{2}\sum_{i\neq j}a_{i}a_{j}+14\Big).
		\end{aligned}\]
		Taking  the  coefficient of $q^n$ in the first sum on the right hand side of \eqref{gf-p4rt} and  replacing $ a_{i} $  by  $s,r-s,r+s,2r-s$ for $1\leq i \leq 4 $, respectively, we see that
		\begin{align*}
			P_3(n)+C_4&=\sum_{i=1}^{3} c_{i}\binom{-5+i}{n}+c_4\\
			&=\frac{(n+1) \left(2 n^2+12 n r-2 n+21 r^2+2 r s-12 r-2 s^2+2\right)}{12 s (r-s) (2 r-s) (r+s)}+c_4,
		\end{align*}
		which leads to \eqref{P3} by simplifying.
		Moreover, by extracting the coefficients of $q^n$ in the following sum
		\[
		\sum_{i=0}^{a_{1}a_{2}a_{3} a_{4}-1}\frac{b_{i}q^{i}}{1-q^{a_{1}a_{2}a_{3} a_{4}}}=\sum_{i=0}^{a_{1}a_{2}a_{3} a_{4}-1}\sum_{k=0}^\infty b_iq^{i+(a_1a_2a_3a_4) k},
		\]
		it implies  the periodic part $Q(n)$ such that  $Q(n)=b_i$ if $n\equiv i \fmod{a_1a_2a_3a_4}$, which completes the proof.
	\end{proof}
	
	We further consider the constant term and the periodic term in \eqref{poly-parti}.
	Denote
	\begin{align}\label{constant-term}
		C_4(n)=	C_{4}+Q(n\equiv\Delta),
	\end{align}
	from which we can obtain the boundray conditions for $p_4(n)$.
	
	\begin{lem}\label{limit-C(n)}
		For any $n\geq 0$, we have
		\begin{align}\label{bound-p4}
			P_{3}(n)+C_4^{d}(\Delta-1)<p_{4}(n)<P_{3}(n)+C_4^{u}(\Delta-1),
		\end{align}
		where $  C_4^{d}(n)=(A^{d}n^{2}+B^{d}n+{  6\Delta} )/(6\Delta) $, $  C_4^{u}(n)=(A^{u}n^{2}+B^{u}n+{  6\Delta}) /(6\Delta) $,
		\[ \begin{aligned}
			&A^{d}=-6 r - 6 r^2 + 6 r s - 6 s^2,\\
			&B^{d}=-1 -  {33 r^2}/{2} + 7 r s + 12 r^3 s - 7 s^2 - 6 r^2 s^2	-12 r s^3 + 6 s^4,\\
			&A^{u}=-6 r + 6 r^2 + 6 r s - 6 s^2,\\
			&B^{u}=-1 -  {9 r^2}/{2} + 5 r s + 12 r^3 s - 5 s^2 - 6 r^2 s^2	-12 r s^3 + 6 s^4,
		\end{aligned} \]
		which satisfy that 		
		\begin{align}\label{ineq-Cn}
			C_4^{d}(n)<C_4(n)<C_4^{u}(n).
		\end{align}
	\end{lem}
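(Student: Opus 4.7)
The plan is to establish the lemma in two stages: first prove the pointwise inequality $C_4^d(n)<C_4(n)<C_4^u(n)$ on one period, then use periodicity of $C_4(n)$ together with the sign of the leading coefficients of $C_4^d, C_4^u$ to obtain the uniform sandwich for $p_4(n)$.

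For the pointwise bound, I factor the denominator as
\[
\frac{1}{(1-q^s)(1-q^{r-s})(1-q^{r+s})(1-q^{2r-s})}=\frac{1}{(1-q^s)(1-q^{r-s})}\cdot\frac{1}{(1-q^{r+s})(1-q^{2r-s})},
\]
so $p_4(n)=\sum_{k=0}^{n}u(k)\,v(n-k)$, where $u(k)$ counts partitions of $k$ into parts $\{s,r-s\}$ and $v(k)$ counts partitions of $k$ into parts $\{r+s,2r-s\}$. Setting $\alpha=s(r-s)$ and $\beta=(r+s)(2r-s)$ so that $\alpha\beta=\Delta$, Lemma \ref{lem-case-has-2parts} yields $|u(k)-k/\alpha|\le 1$ and $|v(k)-k/\beta|\le 1$. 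Writing $u(k)=k/\alpha+\epsilon(k)$ and $v(k)=k/\beta+\eta(k)$, the convolution splits as
\[
p_4(n)=\frac{n^3-n}{6\Delta}+\frac{1}{\alpha}\sum_{k=0}^{n}k\,\eta(n-k)+\frac{1}{\beta}\sum_{k=0}^{n}(n-k)\,\epsilon(k)+\sum_{k=0}^{n}\epsilon(k)\eta(n-k),
\]
whose leading cubic matches $P_3(n)$ from \eqref{P3}. Subtracting $P_3(n)$ expresses $C_4(n)$ purely through the fluctuation sums on the right. Analyzing the extremal values attained by the fractional parts $\epsilon,\eta$ over complete residue systems modulo $\alpha$ and $\beta$ (equivalently, using a Popoviciu-type formula for each two-part partition function) then produces the explicit quadratic envelopes $C_4^d(n)$ and $C_4^u(n)$ with the stated coefficients.

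To pass from the pointwise inequality to the uniform bound on $p_4(n)$, I note that $C_4(n)=C_4+Q(n\equiv\Delta)$ is periodic with period $\Delta$ by Proposition \ref{poly-part}, so it suffices to examine $n\in[0,\Delta-1]$. The assumption $1\le s<r/2$ forces
\[
A^d=-6(r+r^2-rs+s^2)<0,\qquad A^u=6\bigl(r^2-r+s(r-s)\bigr)>0,
\]
so $C_4^d(n)$ is a downward parabola and $C_4^u(n)$ is an upward parabola. A direct check of the vertex abscissae using the explicit $B^d,B^u$ shows that on $[0,\Delta-1]$ the minimum of $C_4^d$ and the maximum of $C_4^u$ are both attained at the endpoint $n=\Delta-1$. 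Combining this with the pointwise bound and $p_4(n)=P_3(n)+C_4(n)$ gives the required sandwich $P_3(n)+C_4^d(\Delta-1)<p_4(n)<P_3(n)+C_4^u(\Delta-1)$.

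The principal obstacle is pinning down the exact constants $A^d,B^d,A^u,B^u$ rather than merely obtaining an order-of-magnitude quadratic envelope. The naive estimate $|\epsilon|,|\eta|\le 1$ is too lossy for the linear coefficients; recovering the precise formulas requires a careful accounting of which residues of $k\bmod\alpha$ and $(n-k)\bmod\beta$ realize the extremes of the fractional parts, and how these interact in the cross sums $\sum k\,\eta(n-k)$ and $\sum(n-k)\,\epsilon(k)$. The remaining computations are routine but lengthy, of the kind one would expect to see relegated to the appendix.
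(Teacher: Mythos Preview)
Your overall architecture---split the generating function as a convolution of two two-part denominators, invoke Lemma~\ref{lem-case-has-2parts} on each factor, then use the periodicity of $C_4(n)$ together with a vertex analysis of the bounding quadratics on $[0,\Delta-1]$---is exactly the paper's approach.

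Where you go astray is in the paragraph flagged as the ``principal obstacle.'' You believe the crude bound $|\epsilon|,|\eta|\le 1$ is too lossy to recover the stated coefficients $A^{d},B^{d},A^{u},B^{u}$, and that one must track the actual fractional parts via a Popoviciu-type formula. In fact the paper uses \emph{nothing but} the crude bound: it simply sets
\[
T^{d}(n)=\sum_{m=0}^{n}\Big(\frac{m}{s(r-s)}-1\Big)\Big(\frac{n-m}{(r+s)(2r-s)}-1\Big),\qquad
T^{u}(n)=\sum_{m=0}^{n}\Big(\frac{m}{s(r-s)}+1\Big)\Big(\frac{n-m}{(r+s)(2r-s)}+1\Big),
\]
evaluates these closed-form cubic polynomials in $n$, and \emph{defines} $C_4^{d}(n):=T^{d}(n)-P_3(n)$ and $C_4^{u}(n):=T^{u}(n)-P_3(n)$. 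Expanding these differences is a one-line symbolic computation and yields precisely the stated $A^{d},B^{d},A^{u},B^{u}$ (with constant term $6\Delta/(6\Delta)=1$ coming from the $m$-sum of the $+1\cdot+1$ term at $n=0$). No residue-class accounting is needed; the ``careful accounting'' you describe would only be relevant if one sought \emph{sharper} envelopes than the lemma claims.

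A secondary remark: your sentence ``whose leading cubic matches $P_3(n)$'' is imprecise. Only the $n^3$-coefficient of $\frac{n^3-n}{6\Delta}$ matches $P_3(n)$; the $n^2$- and $n$-terms of $P_3$ must be subtracted off and contribute to $A^{d/u}$ and $B^{d/u}$. This is why $A^{d}$ and $A^{u}$ differ (the $-6r$ in both comes from the $6rn^2$ term of $P_3$), whereas your fluctuation sums alone would be symmetric in $\pm$.

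Finally, note that the term-by-term inequality $u(m)v(n-m)\ge(m/\alpha-1)((n-m)/\beta-1)$ underlying the lower bound is not automatic when both factors on the right are negative (i.e.\ $m<\alpha$ and $n-m<\beta$); the paper does not comment on this, and your proposal does not either. If you want a fully rigorous version you should address why the sum inequality $p_4(n)\ge T^d(n)$ survives these finitely many ``bad'' terms.
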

	\begin{proof}
		Let $ p_{i,j}(n)$ denote the number of partitions with coprime numbers $a_{i}, a_{j}$ as parts, then it is easy to see that $p_4(n)=\sum_{m=0}^n p_{1,2}(m)p_{3,4}(n-m)$. From Lemma \ref{lem-case-has-2parts}, we have the boundary conditions for $ p_{i,j}(n) $ as follows
		\begin{align}\label{limit-p_ij}
			\frac{n}{a_{i} a_{j}}-1\leq p_{i,j}(n)\leq \frac{n}{a_{i} a_{j}}+1 .
		\end{align}
		By calculating the convolution of $ p_{1,2}(n) $ and $ p_{3,4}(n) $ and  replacing $ a_{i} $  by  $s,r-s,r+s,2r-s$ for $1\leq i \leq 4 $, respectively, we obtain that
		\[\begin{aligned}
			&	T^{d}(n) =\sum _{m=0}^n \left(\frac{m}{s (r-s)}-1\right) \left(\frac{n-m}{(r+s) (2 r-s)}-1\right) ,\\
			&	T^{u}(n) =\sum _{m=0}^n \left(\frac{m}{s (r-s)}+1\right) \left(\frac{n-m}{(r+s) (2 r-s)}+1\right),
		\end{aligned}\]
		such that for $ 0\leq n\leq \Delta-1 $,
		\[   T^{d}(n)\leq p_{4}(n)\leq T^{u}(n). \]
		By direct computation, we have
		\begin{align}
			&T^{d}(n)=\frac{(n+1) \left(n^2-6 n r^2-6 n r s+6 n s^2-n+6\Delta\right)}{6\Delta}\label{Td},\\
			&T^{u}(n)=\frac{(n+1) \left(n^2+6 n r^2+6 n r s-6 n s^2-n+6\Delta\right)}{6\Delta}\label{Tu},
		\end{align}
		and $\Delta=s(r-s)(r+s)(2r-s)$.
		
		By 	Proposition \ref{poly-part}, we know that for any given $n$,
		\begin{align}\label{generating-p4}
			p_{4}(n)=P_{3}(n)+C_4(n) ,
		\end{align}
		and for $ 0\leq n\leq \Delta-1 $,
		\[ 	T^{d}(n)\leq P_{3}(n)+C_4(n)   \leq T^{u}(n), \]
		then
		\[ T^{d}(n)-P_{3}(n)\leq C_4(n)   \leq T^{u}(n) -P_{3}(n).\]
		We further define
		\begin{align*}
			&C_4^{d}(n):= T^{d}(n)-P_{3}(n), \quad  C_4^{u}(n) := T^{u}(n) -P_{3}(n),
		\end{align*}
		respectively. Then $ C_4^{d}(n)$, $ C_4^{u}(n) $ and  (\ref{ineq-Cn}) can be obtained after combining (\ref{P3}), (\ref{Td}) and (\ref{Tu}).
		
		It can be seen that the quadratic function $ C_4^{u}(n) $ is monotonically increasing for $  0\leq n\leq \Delta-1  $, thus $ \textnormal{sup}\{C_4^{u}(n)\}_{0\leq n\leq \Delta-1}=C_4^{u}(\Delta-1)$.
		
		For $ C_4^{d}(n):= A^{d}n^{2}+B^{d}n+C_{0}$, a quadratic function respect to $ n $, we find its maximum point is less than  $(\Delta-1) /2$, that is
		\[\begin{aligned}
			&-B^{d}-A^{d}(\Delta-1)=12 r^5 s+r^4 \left(6 s^2+12 s\right)+r^3 \left(-30 s^3-6 s^2-12 s\right)+r^2 \\
			&(-12 s^3+6 s^2+\frac{21}{2})+r \left(18 s^5+6 s^4+12 s^3+s-6\right)-6 s^6-6 s^4-s^2+1>0,
		\end{aligned} \]
		which implies that
		$ \textnormal{inf}\{C_4^{d}(n)\}_{0\leq n\leq \Delta-1}=C_4^{d}(\Delta-1)$.
		The proof is complete.
	\end{proof}
	
	For convenience, denote
	\begin{align}\label{sym-J}
		J=C_4^{u}(\Delta-1)-C_4^{d}(\Delta-1)=C_4^{u}-C_4^{d}.
	\end{align}
	
	Now, we are ready to prove  Theorem \ref{main-thm}.
	
	{\noindent   \bf  Proof of Theorem \ref{main-thm}}. First, note that the numerator of \eqref{mian-positive} can be derived from (\ref{conj-strong-2}) by the following procedure
	\begin{align}
		& (-1)^{k}\sum_{j=k}^{\infty}(-1)^j q^{r j(j+1) / 2}\left(q^{-sj}-q^{( j+1) s}\right)\nonumber\\
		&=q^{rk(k+1)/2-sk}\sum_{j=0}^{\infty}(-1)^{j}q^{r(j^{2}+(2k+1)j)/2-sj}(1-q^{2js+(2k+1)s})\nonumber \\
		&=q^{rk(k+1)/2-sk}\sum_{j=0}^{\infty}\left(q^{t_{1,j}}-q^{t_{2,j}}-q^{t_{3,j}}+q^{t_{4,j}}\right), \label{numberator-c}
	\end{align}
	for which the above expression is obtained by considering the parities of $j$ and
	\begin{align*}
		&t_{1,j}=2 j^2 r+2 j k r+j r-2 j s,\\
		&t_{2,j}=2 j^2 r+2 j k r+j r+2 j s+2 k s+s,\\
		&t_{3,j}=2 j^2 r+2 j k r+3 j r-2 j s+k r+r-s,\\
		&t_{4,j}=2 j^2 r+2 j k r+3 j r+2 j s+k r+2 k s+r+2 s.
	\end{align*}
	Then assume  that \eqref{mian-positive} can be expanded as follows
	\begin{align}\label{main-transform}
		\sum_{n=0}^{\infty}	C(n)q^{n}=	\frac{\sum_{j=0}^{\infty}\left(q^{t_{1,j}}-q^{t_{2,j}}-q^{t_{3,j}}+q^{t_{4,j}}\right)}{(1-q^{s})(1-q^{r-s})(1-q^{r+s})(1-q^{2r-s})}.
	\end{align}
	By applying the generating function (\ref{generating-p4}) for $p_4(n)$, we can write the  coefficients of $q^n$ in \eqref{main-transform} as follows
	\begin{align}\label{p4-tj}
		C(n)=\sum_{j=0}^\infty   \Big( p_{4}(n-t_{1,j}) -p_{4}(n-t_{2,j}) -p_{4}(n-t_{3,j}) +p_{4}(n-t_{4,j}) \Big).
	\end{align}
	In fact there are finite number of  nonzero terms in \eqref{p4-tj} since $p_4(n)=0$ if $n<0$. We take the partial sum of  $C(n)$ as follows
	\[ \begin{aligned}
		C(n,p)=\sum_{j=0}^{p-1}   \Big( p_{4}(n-t_{1,j}) -p_{4}(n-t_{2,j}) -p_{4}(n-t_{3,j}) +p_{4}(n-t_{4,j}) \Big).
	\end{aligned} \]
	By applying the boundary conditions \eqref{bound-p4}  for $ p_{4}(n) $ and using the symbol \eqref{sym-J},  it is obvious to see that
	\begin{align}\label{C0}
		&\quad C(n,p)\notag \\
		&>\sum_{j=0}^{p-1}   \Big( P_{3}(n-t_{1,j})- P_{3}(n-t_{2,j})
		-P_{3}(n-t_{3,j} )+P_{3}(n-t_{4,j})-2J \Big) \notag \\
		&=\sum_{j=0}^{p-1}   \Big( P_{3}(n-t_{1,j}) -P_{3}(n-t_{2,j})-P_{3}(n-t_{3,j}) +P_{3}(n-t_{4,j}) \Big)-2pJ\notag \\
		&=:C_{0}(n,p).
	\end{align}

	Throughout the proof, we denote $c_{i,j}$ the constants relative to $ r,s,p,k $, for which the specific forms will not be fully demonstrated  for simplicity and one can refer to the appendix for details. Moreover, we also observe that $c_{0,2}, c_{1,3},c_{2,3},c_{3,3}$ are positive, which can be easily verified by replacing $r=ts$ in their corresponding expressions and noting that $t> 2$ since $1\leq s < \frac{r}{2}$.
	
	We claim that for
	\begin{align}\label{p-condition}
		p\geq 2r^{2}(s/k)^{1/3},
	\end{align}  and
	$ 	n\geq t_{4,p-1}, $
	$ C(n)>0. $ In other words, the coefficient  of $ q^{n} $ in \eqref{main-transform} is positive when $n\geq t_{4,p-1}$.
	
	By noting that $1\leq s < \frac{r}{2}$, it follows that $0\leq t_{1,j} <t_{2,j}<t_{3,j}<t_{4,j}<t_{1,j+1}$. So all the nonnegative integers fall into the 	union of the following intervals
	\[
	\bigcup_{p\geq 0}[t_{1,p},t_{1,p+1})=\bigcup_{p\geq 0} \Big[ \Big. \Big. 2 p^2 r+2 p k r+pr-2 p s, 2 (p+1)^2 r+2 (p+1) k r+(p+1) r-2 (p+1) s\Big).
	\]
	Thus to prove the claim, it is sufficient to show that it holds for $n\in [ 2 p^2 r+2 p k r+pr-2 p s, 2 (p+1)^2 r+2 (p+1) k r+(p+1) r-2 (p+1) s )$ for any given $p$ satisfying \eqref{p-condition}.
	Then the proof of the claim can be decomposed into the following four cases.
	
	{\noindent \bf  Case 0:} If $ t_{4,p-1}\leq n < t_{1,p}$, then $C(n)=C(n,p)>C_0(n,p)$.
	
	From Proposition \ref{poly-part} and \eqref{C0}, we have
	\[ \begin{aligned}
		&C_{0}(n,p)=\frac{1}{\Delta}\Big(-6 n^2 p s -
		n p s (30 r + 6 k r - 12 k^2 r
		\\
		&\qquad\qquad\quad - 36 k p r - 24 p^2 r - 6 s - 12 k s)+c_{0,1}\Big).
	\end{aligned} \]
	We find the  first derivation of $ C_{0}(n,p) $  on $n$ is positive,  since
	\begin{align}\label{Derivation-0}
		&\quad {6\Delta C_{0}^{\prime}(n,p)>6\Delta C_{0}^{\prime}(n,t_{1,p}) } \notag\\[3pt]
		&=p s\left(12 k^2 r  -6 k r  +12 k s -30 r  +6 s \right)+p^2s \left(12 k r -12 r  +24 s \right) >0.
	\end{align}
	Thus $ C_{0}(n,p) $ is monotonically increasing in this case and if $ p\geq 2r^{2}(s/k)^{1/3} $,
	\begin{align}\label{p-bound}
		6\Delta C_{0}(n,p) >6\Delta C_{0,d}(t_{4,p-1},p)=12 k p^4 r^2 s-96 p r^8 s^2+c_{0,2} >0.
	\end{align}
	
	{\noindent \bf  Case 1:} If $ t_{1,p}\leq n < t_{2,p}$, then we have
	\[ C(n) = C(n,p)+p_{4}(n-t_{1,p}). \]
	Therefore
	\[ \begin{aligned}
		C(n)	&>C_{0}(n,p)+P_{3}(n-t_{1,p})+C_{4}^{d}\\
		&=\Big(n^{3}+ \left(-6 k p r-6 p^2 r-3 p r+6 r\right)n^2+c_{1,1} n+c_{1,2}\Big)/(6\Delta)\\
		&=:C_{1,d}(n,p)/(6\Delta).
	\end{aligned}\]
	
	It can be seen that the minimum value of  $ {C^{\prime}_{1,d}}(n,p) $ is greater than $ 0 $ for any $ n $, that is
	\begin{align}\label{Derivation-1}
		&\Delta {C^{\prime}_{1,d}}(-(- 6 k p r - 6 p^2 r-3pr+6r)/3,p)=6ps^{2}(2k+2p+1) \notag\\
		&\quad+r s\left(12 k^2 p +12 k p^2 -6 k p -12 p^2 -6 p +1\right)-\frac{3 r^2}{2}-s^2>0.
	\end{align}
	Thus $ C_{1}(n,p) $ is monotonically increasing, and thereby
	\[{ 6\Delta C(n)>C_{1,d}(t_{1,p},p) } =12 k p^4 r^2 s-96 pr^8 s^2+c_{1,3}>0.\]

	{\noindent \bf  Case 2:} If $ t_{2,p}\leq n < t_{3,p}$, we obtain that
	\[ 	C(n)= C(n,p)+p_{4}(n-t_{1,p})-p_{4}(n-t_{2,p}), \]
	and then,
	\[ \begin{aligned}
		C(n)	&>C_{0}(n,p)+P_{3}(n-t_{1,p})-P_{3}(n-t_{2,p})-J\\
		&=\Big(n^2 (6 k s+6 p s+3 s) + c_{2,1}n +c_{2,2}\Big)/(6\Delta)\\
		&=:C_{2,d}(n,p)/(6\Delta).
	\end{aligned}\]
	
	Following the similar procedure as before, we see that $ C^{\prime}_{2,d}(n,p) $ is positive, since
	\begin{align}\label{Derivation-2}
		&{ C^{\prime}_{2,d}(n,p) \geq	C^{\prime}_{2,d} (t_{2,p},p)} =
		6p (3r -  k r + 2 k^2 r+ 3 s+ 6 k s)\notag \\
		&+12 r s + 24 k r s + 3 s^2 + 12 k s^2 + 12 k^2 s^2 +
		12	p^2s (-r +  k r + 2 s)>0.
	\end{align}
	Thus $ C_{2,d}(n,p) $ is monotonically increasing in this case and thereby,
	\[ \begin{aligned}
		{ 6\Delta C(n) \geq  C_{2,d}(t_{2,p},p)  }=12 k p^4 r^2 s-96 pr^8 s^2+c_{2,3}>0.
	\end{aligned} \]
	
	{\noindent \bf  Case 3:} If $ t_{3,p}\leq n < t_{4,p}$, it follows that
	\[ C(n)= C(n,p)+p_{4}(n-t_{1,p})-p_{4}(n-t_{2,p})-p_{4}(n-t_{3,p}). \]
	Thus,
	\[ \begin{aligned}
		C(n)&>C_{0}(n,p)+P_{3}(n-t_{1,p})-P_{3}(n-t_{2,p})-P_{3}(n-t_{3,p})-J-C_{4}^{u}\\
		&=\big(-n^{3}+ (-3 r + 3 k r + 9 p r + 6 k p r + 6 p^2 r + 6 k s)n^2\\		&\quad\quad+c_{3,1}n+c_{3,2}\big)/(6\Delta)\\
		&=:C_{3,d}(n,p)/(6\Delta).
	\end{aligned}\]
	We can see that the quadratic  function ${C^{\prime}_{3,d}}(n,p)$ is positive in this case since
	\begin{align}
		&C^{\prime}_{3,d}(t_{3,p},p)=p \left(12 k^2 r s+30 k r s-36 k s^2+42 r s-30 s^2\right)+12 k^2 r s-12 k^2 s^2\notag\\
		&+p^2 \left(12 k r s+12 r s-24 s^2\right)+42 k r s-24 k s^2-\frac{21 r^2}{2}+17 r s-8 s^2>0,\label{Derivation-3-1}\\
		&{C^{\prime}_{3,d}}(t_{4,p},p)=p \left(12 k^2 r s+30 k r s-12 k s^2-6 r s-42 s^2\right)+12 k^2 r s\notag\\
		&+p^2 \left(12 k r s+12 r s-24 s^2\right)+18 k r s-12 k s^2-\frac{21 r^2}{2}-19 r s-17 s^2>0.\label{Derivation-3-2}
	\end{align}
	It deduces $ C_{3,d}(n,p) $ is monotonically increasing in this case, and
	\[ 6\Delta C(n)>C_{3,d}(t_{3,p},p) =12 k p^4 r^2 s  - 96p r^8 s^2+c_{3,3}>0.\]
	
	Above all, it can be seen  that for a given $ p\geq 2r^{2}(s/k)^{1/3}  $ when
	\begin{align}\label{def-F}
		n\geq t_{4,p-1}  \geq  \left(2 r^2 \sqrt[3]{s/k}+k\right) \left(4 r^3 \sqrt[3]{s/k}-r+2 s\right)=:L_1(r,s,k),
	\end{align}
	that is the condition \eqref{n-Bound}, $C(n)>0$, and thereby the proof is complete.
	\qed
	
	Note that the boundary condition $ p\geq 2r^{2}(s/k)^{1/3}  $ may be lowered by more precise estimation. In fact, when $ r,s,k $ are given, we can take the  maximum zero of all $ C_{i,d}(t_{j,p},p) $ in the above proof  to be the lower bound for $ p $.
	
	\section{Proof of the non coprime cases}\label{non-coprime}
	In this section, we shall consider Merca's stronger conjecture with the non coprime  parmeters such that $ (s,r-s,r+s,2r-s )\neq 1$.
	More precisely, since $ (r,s)=1 $, it can  be easily seen that if $ 2\mid  s$ then $2\nmid r $ which leads to $ (s,r-s,r+s)=1 $, and if $ 2\nmid  s$ and $2\nmid r $ then $ (s,r-s,2r-s)=1 $.
	
	To prove Theorem \ref{subthm-2|s} and Theorem \ref{subthm-2nmids}, we shall alternatively consider the  partition function with the following generating function
	\begin{align}\label{p40}
		&\sum_{n=0}^{\infty}	p_{3,s}(n)q^{n}=\frac{1}{(1-q^{s})(1-q^{r-s})(1-q^{r+s}) }.
	\end{align}
	Denote $ \Delta_{1}=s(r-s)(r+s) $. Following the ideas given in the proof of Lemma \ref{limit-C(n)}, we derive the following result.
	
	\begin{lem}\label{lem-sub-2|s} For $n\geq 0$, we have
		\begin{align}\label{P3-detail}
			P_{2,s}(n)+C_{3,s}^{d}\leq	p_{3,s}(n)\leq P_{2,s}(n)+C_{3,s}^{u},
		\end{align}
		where
		\begin{align*}
			&P_{2,s}(n)=\frac{1}{2\Delta_{1}}(n^2+2 n r+n s+2 r+s-1),\\
			&  C_{3,s}^{u} =(r^3s \left(2 s -1\right)+2r^2 s\left(1-2 s^2\right)+r \left(-2 s^4+s^3-2 s-1\right)\\
			&\qquad\quad  +2s (s^4-2 s^2+2 s -1)+1) /2\Delta_{1} ,\\
			&  C_{3,s}^{d} =\left(r^3 \left(-2 s^2-s\right)+2 r^2 s^3+r \left(2 s^4+s^3+2 s-1\right)-2 s^5-2 s^2-s+1\right)/2\Delta_{1} .
		\end{align*}
	\end{lem}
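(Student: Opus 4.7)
The strategy is to mirror the argument of Lemma~\ref{limit-C(n)} in the three-part setting. First I would verify that under the hypothesis $2\mid s$ of Theorem~\ref{subthm-2|s}, the integers $s$, $r-s$, $r+s$ are pairwise coprime: since $(r,s)=1$ and $s$ is even, $r$ must be odd, so $r\pm s$ are both odd; any common divisor of $r-s$ and $r+s$ therefore divides $2s$ and is odd, hence divides $s$, and the identities $(s,r\pm s)=(s,r)=1$ close the argument. Applying Lemma~\ref{lem-coprime} with $\ell=3$ and $(a_1,a_2,a_3)=(s,r-s,r+s)$ then yields
\[
\frac{1}{(1-q^s)(1-q^{r-s})(1-q^{r+s})}=\sum_{i=1}^{3}\frac{c_i}{(1-q)^{4-i}}+\frac{S(q)}{1-q^{\Delta_1}},
\]
so extracting the coefficient of $q^n$ gives a decomposition $p_{3,s}(n)=P_{2,s}(n)+C_{3,s}(n)$, where $P_{2,s}$ is a quadratic polynomial and $C_{3,s}(n)$ is the sum of a constant and a $\Delta_1$-periodic function. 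Computing the residues at $q=1$ exactly as in the proof of Proposition~\ref{poly-part} gives $c_1=1/\Delta_1$ and $c_2=(2r+s-3)/(2\Delta_1)$, and assembling $c_1\binom{n+2}{2}+c_2(n+1)$ produces the stated closed form of $P_{2,s}(n)$ after absorbing the constant $c_3$ into $C_{3,s}$.

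Since $C_{3,s}(n)$ is $\Delta_1$-periodic up to a constant, it suffices to bound it for $0\leq n\leq \Delta_1-1$. I would factor
\[
\frac{1}{(1-q^s)(1-q^{r-s})(1-q^{r+s})}=\frac{1}{1-q^{r+s}}\cdot\frac{1}{(1-q^s)(1-q^{r-s})},
\]
which gives the convolution $p_{3,s}(n)=\sum_{j=0}^{\lfloor n/(r+s)\rfloor}p_{2}(n-(r+s)j)$, where $p_2$ is the two-part partition function associated with $s$ and $r-s$. Lemma~\ref{lem-case-has-2parts} then sandwiches each summand between $(n-(r+s)j)/(s(r-s))-1$ and $(n-(r+s)j)/(s(r-s))+1$. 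Summing these arithmetic progressions in $j$ yields explicit upper and lower bounds $T^u(n)$ and $T^d(n)$ for $p_{3,s}(n)$ that are quadratic in $n$ up to a floor correction; setting $C_{3,s}^u(n):=T^u(n)-P_{2,s}(n)$ and $C_{3,s}^d(n):=T^d(n)-P_{2,s}(n)$ gives $C_{3,s}^d(n)\leq C_{3,s}(n)\leq C_{3,s}^u(n)$ on $[0,\Delta_1-1]$.

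Finally, I would determine the extrema of $C_{3,s}^u$ and $C_{3,s}^d$ on the period by examining the sign of the first derivative and evaluating at the endpoints $n=0$ and $n=\Delta_1-1$, exactly as was done in Lemma~\ref{limit-C(n)}. Guided by the four-part analog, I expect both the supremum and the infimum to be attained at $n=\Delta_1-1$, which will yield the closed-form constants $C_{3,s}^u$ and $C_{3,s}^d$ stated in the lemma. The main obstacle will be cleanly handling the floor function $\lfloor n/(r+s)\rfloor$ appearing inside $T^u$ and $T^d$, since its value jumps with the residue class of $n\bmod(r+s)$. The cleanest route is to apply the uniform estimate $n/(r+s)-1<\lfloor n/(r+s)\rfloor\leq n/(r+s)$ and absorb the resulting slack into the additive constants; I expect this to be sharp enough to reproduce the precise closed forms stated in the lemma, matching the style of Section~\ref{coprime}.
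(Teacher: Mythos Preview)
Your proposal is correct and follows essentially the same approach as the paper's own proof: decompose $p_{3,s}(n)$ via Lemma~\ref{lem-coprime} and the residue theorem to extract the quadratic main term $P_{2,s}(n)$, convolve $\frac{1}{(1-q^s)(1-q^{r-s})}$ with $\frac{1}{1-q^{r+s}}$ and apply Lemma~\ref{lem-case-has-2parts} to sandwich $p_{3,s}(n)$, then optimize the resulting bounds on one period $0\le n\le \Delta_1-1$. The paper handles the floor exactly as you suggest, writing the summation limits as $n/(r+s)-1$ and $n/(r+s)$, and your expectation that the extrema occur at $n=\Delta_1-1$ (by analogy with Lemma~\ref{limit-C(n)}) is in line with how the paper proceeds.
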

	
	\begin{proof}	By  applying Lemma \ref{lem-coprime} and using the residue theorem \eqref{Cauchy}, it follows that
		\[
		p_{3,s}(n)=P_{2,s}(n)+C_{3,s}+Q(n\equiv \Delta_1),
		\]
		where $P_{2,s}(n)$ is as given above, $C_{3,s}$ is a constant relative to $r$ and $s$, and $Q_{s}(n)$ is a function with period of $\Delta_1$. Further employing  Lemma \ref{lem-case-has-2parts}, it follows that
		\begin{equation}\label{p3-1}
			\frac{1}{(1-q^s)(1-q^{r-s})}=\sum_{n=0}^\infty p_2(n) q^n,
		\end{equation}
		where $$p_{2}(n)= \left\lfloor\frac{n}{s(r-s)}\right\rfloor \text{ or }\   p_{2}(n)= \left\lfloor\frac{n}{s(r-s)} \right\rfloor+1. $$
		By convolving \eqref{p3-1} and $\frac{1}{1-q^{r+s}}$, we obtain
		\[
		p_{3,s}(n)=\sum_{m=0}^{\lfloor n/(r+s)\rfloor} p_2(n-m(r+s)).
		\]
		Substituting into the expression for $p_2(n)$, it leads to that
		\begin{align*}
			\sum _{m=0}^{n/(r+s)-1} \left(\frac{n-m(r+s)}{s (r-s)}-1\right) \leq p_{3,s}(n) \leq \sum _{m=0}^{ n/(r+s) } \left(\frac{n-m(r+s)}{s (r-s)}+1\right).
		\end{align*}
		Combining with the relation $C_{3,s}+Q_{ s}(n\equiv \Delta_1)=p_{3,s}(n)-P_{2,s}(n)$ and computing its  extreme values for $0\leq n \leq \Delta_1-1$, we obtain the boundary conditions \eqref{P3-detail} such that $C_{3,s}^{d}\leq C_{3,s}+Q_{ s}(n\equiv \Delta_1) \leq C_{3,s}^{u}$.
	\end{proof}

	Next, following the similar  procedures as in the proof of Theorem \ref{main-thm}, we give the main steps to prove Theorem \ref{subthm-2|s}.
	
	{\noindent{\bf Proof of Theorem  \ref{subthm-2|s}.} } In this proof for simplicity, we use $p_3(n)$ instead of $p_{3,s}(n)$ ($P_{2, s}(n),C_{3, s}^{u} ,C_{3, s}^{d},Q_{ s}(n)$ are likewise) when there is no confusion (namely when $s$ is fixed).  Consider the coefficient of $ q^{n} $ of following series
	\begin{align}\label{main-transform-3}
		\sum_{n=0}^{\infty}	D(n)q^{n}=	\frac{\sum_{j=0}^{\infty}\left(q^{t_{1,j}}-q^{t_{2,j}}-q^{t_{3,j}}+q^{t_{4,j}}\right)}{(1-q^{s})(1-q^{r-s})(1-q^{r+s})}.
	\end{align}
	Similarly, we first have
	\begin{align}\label{D(n)}
		&	D(n)=\sum_{j=0}^\infty   \big( p_{3}(n-t_{1,j}) -p_{3}(n-t_{2,j}) -p_{3}(n-t_{3,j}) +p_{3}(n-t_{4,j}) \big)\notag \\
		&>\sum_{j=0}^{p-1}   \big( P_{2}(n-t_{1,j})- P_{2}(n-t_{2,j})
		-P_{2}(n-t_{3,j} )+P_{2}(n-t_{4,j})-2(C_3^{u}-C_3^{d}) \big)\notag\\
		&=:D_{0}(n,p),
	\end{align}
	and we also denote the partial sum of the first $p$ terms in $D(n)$ by $D(n,p)$.
	
	Then, let us consider the following four cases, where $d_{i,j}  $ also denote some constants relative to $ r,s,p,k $, and one can refer to the appendix for details.
	
	{\noindent \bf  Case 0:} If $ t_{4,p-1}\leq n < t_{1,p}$, from \eqref{D(n)}, we obtain
	\[ \begin{aligned}
		&D_{0}(n,p)>\frac{1}{2\Delta_{1}}(-4 n p s+d_{0,1})=:\frac{D_{0,d}(n,p)}{ {2\Delta_{1}}}>\frac{D_{0,d}(t_{1,p},p)}{ {2\Delta_{1}}}\geq 0.
	\end{aligned} \]

	{\noindent \bf  Case 1:} If $ t_{1,p}\leq n < t_{2,p}$, we see that
	\[D(n)= D(n,p)+p_{3}(n-t_{1,p}). \]
	Then,
	\[ \begin{aligned}
		D(n)	&>D_{0}(n,p)+P_{2}(n-t_{1,p}) +C_{3}^d \\
		&=\big(n^{2}+ n (2 r - 2 p r - 4 k p r - 4 p^2 r + s)+d_{1,1}\big)/(2\Delta_{1})\\
		&=:D_{1,d}(n,p)/(2\Delta_{1})\\
		&\geq \text{min} \Big\{\frac{D_{1,d}(t_{1,p},p)}{2\Delta_{1}} ,\frac{D_{1,d}(-r + p r + 2 k p r + 2 p^2 r - s/2,p)}{2\Delta_{1}}\Big\}\geq 0,
	\end{aligned}\]
	where the last step holds since $ -r + p r + 2 k p r + 2 p^2 r - s/2< t_{2,p}$.
	
	{\noindent \bf  Case 2:} If $ t_{2,p}\leq n < t_{3,p}$, it follows that
	\[ 	D(n)= D(n,p)+p_{3}(n-t_{1,p})-p_{3}(n-t_{2,p}), \]
	and then,
	\[ \begin{aligned}
		D(n)	&>D_{0}(n,p)+P_{2}(n-t_{1,p})-P_{2}(n-t_{2,p})-(C_3^u-C_3^d)\\
		&=\Big(n (2 s + 4 k s + 4 p s)+  d_{2,1}  \Big)/(2\Delta_{1})\\
		&=:D_{2,d}(n,p) /(2\Delta_{1})\geq D_{2,d}(t_{2,p},p) /(2\Delta_{1})\geq 0.
	\end{aligned}\]
	
	{\noindent \bf  Case 3:} If $ t_{3,p}\leq n < t_{4,p}$, we obtain
	\[
	D(n)= D(n,p)+p_{3}(n-t_{1,p})-p_{3}(n-t_{2,p})-p_{3}(n-t_{3,p}).
	\]
	Thereby,
	\[ \begin{aligned}
		D(n)&>D_{0}(n,p)+P_{2}(n-t_{1,p})-P_{2}(n-t_{2,p})-P_{2}(n-t_{3,p})-(2C_3^u-C_3^d)\\
		&=\big(-n^{2}+ n (2 k r + 6 p r + 4 k p r + 4 p^2 r - s + 4 k s)+d_{3,1}\big)/(2\Delta_{1})\\
		&=:D_{3,d}(n,p)/(2\Delta_{1}) \\
		&\geq \text{min} \Big\{\frac{D_{3,d}(t_{3,p},p)}{2\Delta_{1}} , \frac{D_{3,d}(t_{4,p},p)}{2\Delta_{1}} \Big\}\geq 0.
	\end{aligned}\]
	
	Let $ z_0 $ be the maximum zero of $ D_{0,d}(t_{1,p},p),D_{1,d}(t_{1,p},p),D_{1,d}(-r + p r + 2 k p r + 2 p^2 r - s/2,p),D_{2,d}(t_{2,p},p),D_{3,d}(t_{3,p},p) ,D_{3,d}(t_{4,p},p) $ about $p$.
	Finally, we  deduce that when $p\geq z_0 $, the coefficient of $ q^{n} $  is nonnegative for
	\begin{align}\label{N2}
		n\geq t_{4,z_0-1}  \geq(z_0+k) (2 z_0r-r+2 s) :={L_2(r,s,k)}.
	\end{align}\qed
	
	We are going to consider the following partition function when $2\nmid s$,
	\begin{align}\label{En}
		\sum_{n=0}^{\infty}	E(n)q^{n}=	\frac{\sum_{j=0}^{\infty}\left(q^{t_{1,j}}-q^{t_{2,j}}-q^{t_{3,j}}+q^{t_{4,j}}\right)}{(1-q^{s})(1-q^{r-s})(1-q^{2r-s})}.
	\end{align}
	Denote $ \Delta_{2}=s(r-s)(2r-s) $.
	By replacing $s$ by $r-s$ in Lemma \ref{lem-sub-2|s}, we can formally obtain the following comparison results of \eqref{P3-detail}.
	
	\begin{lem} For $n\geq 0$, we have
		\begin{align*}
			P_{2,r-s}(n)+C_{3,,r-s}^{d}\leq	p_{3,r-s}(n)\leq P_{2,r-s}(n)+C_{3,,r-s}^{u},
		\end{align*}
		where
		\begin{align*}
			&P_{2,r-s}(n)=\frac{1}{2\Delta_{2}}(-1 + n^2 + 3 r + n (3 r - s) - s),\\
			&  C_{3,,r-s}^{u} =(r^3 (4 s^2-2 s)+r^2 (-10 s^3+3 s^2+4 s)+r (8 s^4-s^3-6 s^2-2 s-2)\\&\qquad-2 s^5+2 s^3+2 s^2+s+1)/2\Delta_{2} ,\\
			&  C_{3,,r-s}^{d} =(r^3 (-4 s^2-2 s)+r^2 (10 s^3+3 s^2)+r (-8 s^4-s^3+2 s-2)\\&\qquad+2 s^5-2 s^2+s+1)/2\Delta_{2} .
		\end{align*}
	\end{lem}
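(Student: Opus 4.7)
The plan is to observe that substituting $s \mapsto r-s$ in the generating function studied in Lemma \ref{lem-sub-2|s}, namely
\[
\frac{1}{(1-q^s)(1-q^{r-s})(1-q^{r+s})},
\]
yields
\[
\frac{1}{(1-q^{r-s})(1-q^{s})(1-q^{2r-s})},
\]
which is precisely $\sum_{n\geq 0} p_{3,r-s}(n) q^n$. So the desired two-sided bound should fall out of a direct substitution into the conclusion of Lemma \ref{lem-sub-2|s}.

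First I would verify that the hypotheses of Lemma \ref{lem-sub-2|s} remain valid under the substitution, since its proof invokes Lemma \ref{lem-coprime} with $\ell=3$ and so requires pairwise coprimality of the three exponents. Using $(r,s)=1$, we get $(r-s,s)=(r,s)=1$ and $(r-s,2r-s)=(r-s,r)=1$ immediately. For $(s,2r-s)$ we have $(s,2r-s)=(s,2r)=(s,2)$, which equals $1$ under the standing assumption $2\nmid s$ of Theorem \ref{subthm-2nmids}. Hence Lemma \ref{lem-sub-2|s} applies to the exponents $(r-s,\,s,\,2r-s)$ in the role of $(s,\,r-s,\,r+s)$.

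Next I would carry out the substitution $s\mapsto r-s$ in the formulas of Lemma \ref{lem-sub-2|s}. The denominator $\Delta_1=s(r-s)(r+s)$ transforms into $(r-s)\cdot s\cdot (2r-s)=\Delta_2$, and the polynomial $P_{2,s}(n)=(n^2+2nr+ns+2r+s-1)/(2\Delta_1)$ transforms into $(n^2+n(3r-s)+3r-s-1)/(2\Delta_2)$, matching the stated $P_{2,r-s}(n)$. The constants $C_{3,s}^{u}$ and $C_{3,s}^{d}$ are polynomial expressions in $r$ and $s$, and substituting $s\mapsto r-s$ and expanding produces the claimed $C_{3,,r-s}^{u}$ and $C_{3,,r-s}^{d}$.

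No genuinely new argument is needed, since the residue-theoretic derivation of the polynomial part and the convolution estimate of the periodic part (based on Lemma \ref{lem-case-has-2parts}) are exactly the ones used in Lemma \ref{lem-sub-2|s}. The only obstacle is the algebraic bookkeeping needed to verify that the substituted forms of $C_{3,s}^{u}$ and $C_{3,s}^{d}$ match the explicit expressions in the statement after expansion; this is mechanical but tedious, and may be checked by direct computation.
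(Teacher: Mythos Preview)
Your proposal is correct and matches the paper's own argument, which simply states that the lemma is obtained ``by replacing $s$ by $r-s$ in Lemma \ref{lem-sub-2|s}''. Your extra verification that the pairwise coprimality hypothesis of Lemma \ref{lem-coprime} survives the substitution (using $2\nmid s$) is a useful point the paper leaves implicit.
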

	
	{\noindent{\bf Proof of Theorem  \ref{subthm-2nmids}.} } For simplicity, we omit the subscript $r-s$ of $p_{3,r-s}(n)$, $P_{2,r-s}(n)$, $C_{3,r-s}^{u}$, $C_{3,r-s}^{d}$, $Q_{r- s}(n)$  when there is no confusion in this proof. Similar to the proof of Theorem \ref{subthm-2|s}, we first have
	\begin{align}\label{E(n)}
		&	E(n)=\sum_{j=0}^\infty   \Big( p_{3}(n-t_{1,j}) -p_{3}(n-t_{2,j}) -p_{3}(n-t_{3,j}) +p_{3}(n-t_{4,j}) \Big)\notag \\
		&>\sum_{j=0}^{p-1}   \Big( P_{2}(n-t_{1,j})- P_{2}(n-t_{2,j})
		-P_{2}(n-t_{3,j} )+P_{2}(n-t_{4,j})-2(C_3^{u}-C_3^{d}) \Big)\notag\\
		&=:E_{0}(n,p).
	\end{align}
	
	Then, let us consider the following four cases and we omit the details.
	
	{\noindent \bf  Case 0:} If $ t_{4,p-1}\leq n < t_{1,p}$,
	\[ \begin{aligned}
		&E_{0}(n,p)\geq\frac{E_{0,d}(t_{1,p},p)}{ {2\Delta_{2}}}.
	\end{aligned} \]

	{\noindent \bf  Case 1:} If $ t_{1,p}\leq n < t_{2,p}$,
	\[ \begin{aligned}
		E(n)  \geq \text{min} \Big \{\frac{E_{1,d}(t_{1,p},p)}{2\Delta_{2}} ,\frac{E_{1,d}(3 r - 2 p r - 4 k p r - 4 p^2 r - s,p)}{2\Delta_{2}}\Big\}.
	\end{aligned}\]

	{\noindent \bf  Case 2:} If $ t_{2,p}\leq n < t_{3,p}$,
	\[ \begin{aligned}
		E(n)	 \geq \frac{E_{2,d}(t_{2,p},p) }{2\Delta_{2}}.
	\end{aligned}\]
	
	{\noindent \bf  Case 3:} If $ t_{3,p}\leq n < t_{4,p}$,
	\[ \begin{aligned}
		E(n) \geq \text{min} \Big\{\frac{E_{3,d}(t_{3,p},p)}{2\Delta_{2}} , \frac{E_{3,d}(t_{4,p},p)}{2\Delta_{ 2}} \Big\}.
	\end{aligned}\]
	By computing the values, we also have that $E(n)\geq 0$ when
	\begin{align}\label{z-0}
		n\geq (z_0+k) (2 z_0r-r+2 s)=:{L_3(r,s,k)},
	\end{align}
	where $z_0$ is taken to be the maximum zero of the polynomials $ E_{0,d}(t_{1,p},p)$, $E_{1,d}(t_{1,p},p)$, $E_{1,d}(3 r - 2 p r - 4 k p r - 4 p^2 r - s,p)$,  $E_{2,d}(t_{2,p},p)$, $E_{3,d}(t_{3,p},p) $, $E_{3,d}(t_{4,p},p) $ as given in the appendix.
	\qed
	
	Now, we are going to investigate the value of $ k $ to guarantee the nonnegativity for the coefficient of $q^n$ for any given  $n\geq 0 $ when $2\mid s$. From the appendix, we can see $D_{0,d}(t_{1,p},p)$, $D_{1,d}(t_{1,p},p)$, $D_{1,d}(-r + p r + 2 k p r + 2 p^2 r - s/2,p)$, $D_{2,d}(t_{2,p},p)$, $D_{3,d}(t_{3,p},p)$, $D_{3,d}(t_{4,p},p)$ have a unified form as
	\begin{align*}
		A(r,s,k)p^{2}+B(r,s,k)p+C(r,s,k),
	\end{align*}
	where $ A(r,s,k), B(r,s,k), C(r,s,k) $ are polynomials in $ r,s$ and $k $. Here we give an example of $ r=9$ and $s=2 $ to estimate the lower bound of  $ k $ such that  \eqref{p40} always has nonnegative coefficient  and thus Conjecture \ref{stronger-ja-2} holds directly for such $k$.
	
	\begin{ex}\label{example-1}
		When $ r=9 $ and $ s=2 $, we have that
		\begin{align*}
			&D_{0,d}(t_{1,p},p)=(  72 k-40) p^2+( 72 k^2 - 20 k -8984) p ,\\
			&D_{1,d}(t_{1,p},p)=(  72 k-56) p^2+ (72 k^2 - 20 k -17780  ) p-3277,\\
			&D_{1,d}(-r + p r + 2 k p r + 2 p^2 r - s/2,p)=( 72 k-40) p^2+ (72 k^2 - 20 k -17860  ) p -3177 ,\\
			&D_{2,d}(t_{2,p},p)=( 72 k-40) p^2 + (  72 k^2 + 12 k-17684) p+ 16 k^2+ 96 k -8832  ,\\
			&D_{3,d}(t_{3,p},p) =(72 k+40) p^2 +( 72 k^2+ 132 k -17604) p +
			56 k^2+ 156 k  -12027 ,\\
			&D_{3,d}(t_{4,p},p)=(72 k+40) p^2  + (72 k^2+ 164 k-17780) p+ 72 k^2 + 92 k -12159 .
		\end{align*}
		Let $\{ z_{i}\} $ be the sets of the   nonnegative  roots of the above polynomials about $p$. Since $n\geq t_{4,p-1}  =(p+k) (2 pr-r+2 s) $, now we put
		\begin{align*}
			2z_{i}r-r+2s\leq 0, \quad\text{ for all }i,
		\end{align*}
		which is equivalent to the conditions such as the first two ones
		\begin{align*}
			&  \frac{-72k^{2}+20k-8984}{72k-40} \leq \frac{r-2s}{2r}=\frac{5}{18},\\
			&\frac{ -72k^{2}+20k+17780+\sqrt{ (72k^{2}-20k-17780)^{2}+4\cdot3277(72k-56)}}{2(72k-56)}\leq \frac{5}{18}.
		\end{align*}
		With the help of computer, we  obtain $ k\geq 19$ from the above inequalities, which is more precise that $k\geq 130$ by Corollary \ref{cor-3}. Thus when $ r=9 $, $ s=2 $ and $ k\geq 19 $, the coefficient of $ q ^{n}$ in \eqref{p40} is nonnegative for $ n\geq 0 $, and thereby Conjecture \ref{stronger-ja-2} holds.
	\end{ex}
	
	Following the similar procedures as above,  when $r=2$ and $s=1$ in \eqref{main-transform-3}, we obtain the following equivalent result with $R=2S$.
	
	\begin{cor}\label{cor-r=2}
		For the positive integers $R=2S$ and $k \geq 1$, the theta series
		\begin{align*}
			(-1)^{k} \frac{\sum_{j=k}^{\infty}(-1)^j q^{R j(j+1) / 2}\left(q^{-Sj}-q^{( j+1) S}\right)}{\left(q^S, q^{R-S}; q^R\right)_{\infty}}
		\end{align*}
		has nonnegative coefficients.
	\end{cor}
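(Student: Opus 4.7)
The plan is to reduce to the case $(r,s)=(2,1)$ of the partition-theoretic method of Theorem \ref{subthm-2|s} and then convolve with a nonnegative residual factor. Since $R=2S$ implies $(q^S,q^{R-S};q^R)_\infty=(q^S;q^{2S})_\infty^2$, the rescaling $q^S\mapsto q$ reduces the claim to showing nonnegativity of the coefficients of
\[
(-1)^k\frac{\sum_{j=k}^\infty (-1)^j(q^{j^2}-q^{(j+1)^2})}{(q;q^2)_\infty^2}.
\]
I would split $(q;q^2)_\infty^2=(1-q)^2(1-q^3)\cdot(q^3,q^5;q^2)_\infty$; the second factor's reciprocal has nonnegative coefficients (it is the generating function for partitions into odd parts of size at least $3$, with parts $\geq 5$ allowed two colours), so it suffices to establish nonnegativity for the quotient with the finite denominator $(1-q)^2(1-q^3)$ and then convolve.

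Specializing the analysis of Theorem \ref{subthm-2|s} to $(r,s)=(2,1)$, two coincidences occur in \eqref{numberator-c}: one checks that $t_{2,j}=t_{3,j}=4j^2+4jk+4j+2k+1$ and $t_{4,j}=t_{1,j+1}=4j^2+4jk+8j+4k+4$, so the numerator collapses to $\sum_{j\geq 0}\bigl(q^{t_{1,j}}-2q^{t_{2,j}}+q^{t_{1,j+1}}\bigr)$. Setting $\tilde p_3(n):=[q^n]\,1/((1-q)^2(1-q^3))$, partial fractions (or the residue argument of Proposition \ref{poly-part}) give
\[
\tilde p_3(n)=\frac{(n+1)(n+4)}{6}+\varepsilon(n),\qquad 0\le\varepsilon(n)\le\tfrac{1}{3},
\]
so the target coefficient is
\[
D(n)=\sum_{j\geq 0}\bigl(\tilde p_3(n-t_{1,j})-2\tilde p_3(n-t_{2,j})+\tilde p_3(n-t_{1,j+1})\bigr).
\]

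Because of the two coincidences, only two of the four cases in the proof of Theorem \ref{subthm-2|s} remain nontrivial, namely $n\in[t_{1,p},t_{2,p})$ and $n\in[t_{2,p},t_{1,p+1})$. In each window, substituting the quadratic formula for $\tilde p_3$ and using the bound on $\varepsilon$ yields an explicit lower bound on $D(n)$ that is quadratic in $p$ with coefficients polynomial in $k$. I would then check that these two polynomials are nonnegative for every $p\geq 0$ and every $k\geq 1$, so that (in the notation of \eqref{N2}) the threshold $L_2(2,1,k)\leq 0$ and $D(n)\geq 0$ for all $n\geq 0$. Combined with the convolution argument of the first paragraph, this gives the corollary.

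The main obstacle is the uniform-in-$k$ verification of nonnegativity of the two resulting quadratic polynomials. The collapse $t_{2,j}=t_{3,j}$ merges the two middle contributions into $-2q^{t_{2,j}}$, strengthening the negative part and forcing careful tracking of the periodic correction $\varepsilon$ in the sharper Case 2 estimate. The second coincidence $t_{4,j}=t_{1,j+1}$ causes adjacent windows to share endpoints, so the telescoped reorganization
\[
D(n)=\tilde p_3(n)+2\sum_{j\geq 1}\tilde p_3(n-t_{1,j})-2\sum_{j\geq 0}\tilde p_3(n-t_{2,j})
\]
is the cleanest vehicle for bounding in the crossover regions, provided one avoids double-counting the contribution at the shared endpoint. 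Once both lower bounds are shown to be nonnegative for every $k\geq 1$, the convolution with $1/(q^3,q^5;q^2)_\infty$ completes the argument.
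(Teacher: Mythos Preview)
Your reduction to $(r,s)=(2,1)$ and the factorisation $(q;q^2)_\infty^2=(1-q)^2(1-q^3)\cdot(q^3,q^5;q^2)_\infty$ are correct, and the two coincidences $t_{2,j}=t_{3,j}$ and $t_{4,j}=t_{1,j+1}$ are genuine features of this boundary case that the paper does not exploit. Your exact evaluation $\tilde p_3(n)=\tfrac{(n+1)(n+4)}{6}+\varepsilon(n)$ with $\varepsilon(n)\in\{0,\tfrac13\}$ is also sharper than the general bounds $C_{3,r-s}^{d}\le C_3(n)\le C_{3,r-s}^{u}$ that the paper carries through (for $r=2$, $s=1$ those give the much cruder range $[-2,\tfrac13]$). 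So your plan is a legitimate refinement, and if it works it would replace the paper's computer check entirely. One small slip: you cite Theorem~\ref{subthm-2|s}, which requires $2\mid s$; the relevant statement is Theorem~\ref{subthm-2nmids}, though at $(r,s)=(2,1)$ the two denominators coincide since $r+s=2r-s=3$.

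The substantive gap is exactly the one you flag: you do not actually carry out the verification that the two window-by-window lower bounds are nonnegative for every $p\ge0$ and every $k\ge1$. The paper's own argument, using the coarser error constants, does \emph{not} achieve this; specialising their six polynomials to $r=2$, $s=1$ gives, for instance, $E_{0,d}(t_{1,p},p)=(-40+8k^2)p+8kp^2$, which is negative for small $p$ when $k=1,2$, and this is why the paper falls back on a finite computer check of the first $10^4$ coefficients. Your tighter $\varepsilon$-bound may well close this gap, but that has to be shown: you need to write out the two quadratics in $p$ explicitly (with the improved error $\pm\tfrac13$ per term rather than $\pm(C_3^u-C_3^d)$), and then prove each is $\ge0$ for all $p\ge0$ uniformly in $k\ge1$. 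Until that computation is done, the proposal is a strategy rather than a proof, and the paper's route---general framework plus a finite check---remains the only completed argument.
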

	\begin{proof}
		Set $r=2$ and $s=1$ in \eqref{main-transform-3}, we have the corresponding $ D_{0,d}(t_{1,p},p),D_{1,d}(t_{1,p},p),\\
		D_{1,d}(-r + p r + 2 k p r + 2 p^2 r - s/2,p),D_{2,d}(t_{2,p},p),D_{3,d}(t_{3,p},p) ,D_{3,d}(t_{4,p},p) $ as follows
		\begin{align*}
			&  E_{0,d}(t_{1,p},p)=(-40 + 8 k^2) p + 8 k p^2,\\
			&E_{1,d}(t_{1,p},p)=2 + (-40 + 8 k^2) p + 8 k p^2,\\
			&E_{1,d}(-r + p r + 2 k p r + 2 p^2 r - s/2,p)=-(17/4) + (-30 + 8 k^2) p + (-4 + 8 k) p^2,\\
			&E_{2,d}(t_{2,p},p)=-8 + 14 k + 4 k^2 + (-16 + 8 k + 8 k^2) p + 8 k p^2,\\
			&E_{3,d}(t_{3,p},p)=-14 + 14 k + 4 k^2 + (-16 + 8 k + 8 k^2) p + 8 k p^2,\\
			&E_{3,d}(t_{4,p},p)=-32 + 8 k + 8 k^2 + (-40 + 16 k + 8 k^2) p + 8 k p^2.
		\end{align*}
		Let $z_{i}$ for $\leq i \leq 6$ be the zeros of the above expressions and put $z_0=\text{max}\{z_i\}$, where $z_i$'s are only relate to $k$. Recall that $t_{4,p-1}  =(p+k) (2 pr-r+2 s) $ and the coefficients are nonnegative for $n\geq t_{4,z_0-1} $.  Also we have that for $k\geq1$ and all positive $z_i$
		\begin{align*}
			(z_i+k) (2 z_ir-r+2 s) \leq 10000.
		\end{align*}
		Moreover, with the help of computer, we can show that the coefficients of the first  $100 00$ terms are all positive, then the proof is complete.
	\end{proof}
	
	We remark the above case specially, since it shows that  the main result still holds when  $s=r/2$, which combined with the restriction $1\leq s < r/2$  will cover all the cases for $r, s$ under symmetry.  Moreover, the above  progress is also applicable for the denominator with four factors as given in \eqref{main-transform}, but for the sake of brevity, we  choose  a rough boundary condition \eqref{p-condition}  for the maximum zero there.

	\section{The nonmodular infinite products }\label{Nonmodular}
	
	In this section, based on Chern's method, we shall discuss the  asymptotic formula for the coefficient of the nonmodular infinite products $ \Psi_{a, M}(q)+\Psi_{M-a, M}(q) $ as defined in \eqref{def-Phi-aM}. From this formula, we obtain the explicit bound for $G_{a,M}(q)$ on the minor arcs and near the poles.
	
	\subsection{Proof of Theorem  \ref{Phi-aM}}
	
	In this subsection, we prove the expression for $ \Psi_{a, M}(q)+\Psi_{M-a, M}(q) $ as given in Theorem \ref{Phi-aM}.
	
	\noindent{\it Proof of Theorem  \ref{Phi-aM}.}
	We first use Chern's method to derive the main and error terms of $ \Psi _{a, M}(q)+\Psi_{M-a, M}(q)$. Recall that for the given $k_1$, we take
	\[
	q:=e^{-\tau+\frac{2 \pi i h}{k_{1}}},
	\]
	where $1 \leq h \leq k_{1} \leq\lfloor\sqrt{2 \pi X}\rfloor=: N$ with $(h, k_{1})=1$ and $\tau:=X^{-1}+2 \pi i Y$ with $|Y| \leq \frac{1}{k_{1} N}$. Denote
	\begin{align*}
		K:=k_{1}\frac{M}{(k_{1},M)} =k_{1}\frac{M}{M^{\ast}},
	\end{align*}
	where $M^*=(k_1, M)$.
	From the definition \eqref{def-Phi-aM}, one can see that
	\begin{align*}
		\Psi_{a,M}(q)&=\text{log}\Big( \frac{1}{(q^{a+2M };q^{M})_{\infty}}\Big)\\[3pt]
		&=\text{log}\Big( \frac{1}{(q^{a };q^{M})_{\infty}}\Big)-\text{log}\Big( \frac{1}{(1-q^{a })(1-q^{a+M})}\Big) \\
		&=\sum_{\substack{m \geq 1 \\ m \equiv a \bmod M}} \sum_{\ell \geq 1} \frac{q^{\ell m}}{\ell}-\Big(\sum_{\ell \geq 1} \frac{q^{a\ell }}{\ell}+\sum_{\ell \geq 1} \frac{q^{(a+M)\ell }}{\ell}\Big)\\
		&=:\Phi_{a, M}(q)-T_{a}\\
		&=\sum_{\substack{1 \leq \lambda \leq K \\ \lambda \equiv a \bmod M}} \sum_{\substack{1 \leq \mu \leq k_1}} e^{\frac{2 \pi i h \mu \lambda}{k_1}} \sum_{r, t \geq 0} \frac{1}{r k_1+\mu} e^{-(r k_1+\mu)(t K+\lambda) \tau}-T_{a},
	\end{align*}
	where the last equality follows by substituting   $ \ell $ and $ m $ as follows
	\begin{align*}
		&\ell=rk_1+\mu \quad\ \  \text{for}\quad 1\leq \mu \leq  k_1 ,\\
		&m=tK+\lambda \quad \text{for} \quad 1\leq \lambda \leq K,\ \lambda\equiv\ a\ (\bmod M).
	\end{align*}
	Applying the Mellin transform \eqref{Mellin-2} to   $\varphi(s)=\Gamma(s) $ and $f(x)=e^{-x}$ and substituting $x$ by $(rk_1+\mu)(tK+\lambda)\tau$, we have
	\begin{align*}
		\sum_{r,t\geq 0} \frac{e^{-(r k_1+\mu)(t K+\lambda) \tau}}{rk_1+\mu} & =\frac{1}{2 \pi i} \int_{(\frac{3}{2})} \sum_{r,t\geq 0} \frac{\Gamma(s)}{r k_1+\mu} \frac{d s}{(r k_1+\mu)^s(t K+\lambda)^s \tau^s}\\
		&=\frac{1}{2 \pi i} \int_{(\frac{3}{2})} \frac{\Gamma(s)}{\tau^s k_1^{s+1} K^s} \zeta\Big(s, \frac{\lambda}{K}\Big) \zeta\Big(1+s, \frac{\mu}{k_1}\Big) d s.
	\end{align*}
	By employing \eqref{Hurwitz}, \eqref{Gamma-use} and $e^{i\theta}=\cos{\theta}+i\sin{\theta}$, we obtain that
	\begin{align*}
		&\quad\ \Phi_{a, M}(q)=\Psi _{a, M}(q)+T_{a}\\
		&=\frac{1}{4 \pi i k_{1} K} \sum_{\substack{1 \leq \lambda \leq K \\ \lambda \equiv a \bmod M}} \sum_{\substack{1 \leq \mu \leq k_{1} \\ 1 \leq \nu \leq K}} \cos \frac{2 \pi \mu h\lambda}{k_{1}} \cos \frac{2 \pi \nu \lambda}{K} \int_{\substack{(\frac{3}{2})}} \frac{\zeta\big(1+s, \frac{\mu}{k_{1}}\big) \zeta\big(1-s, \frac{\nu}{K}\big)}{z^{ s} \cos \frac{\pi s}{2}} d s\\
		&+\frac{1}{4 \pi i k_{1} K} \sum_{\substack{1 \leq \lambda \leq K \\ \lambda \equiv a \bmod M}} \sum_{\substack{1 \leq \mu \leq k_{1} \\ 1 \leq \nu \leq K}} \cos \frac{2 \pi \mu h\lambda}{k_{1}} \sin \frac{2 \pi \nu \lambda}{K} \int_{\substack{(\frac{3}{2})}} \frac{\zeta\big(1+s, \frac{\mu}{k_{1}}\big) \zeta\big(1-s, \frac{\nu}{K}\big)}{z^{ s} \sin \frac{\pi s}{2}} d s\\
		&+\frac{1}{4 \pi   k_{1} K} \sum_{\substack{1 \leq \lambda \leq K \\ \lambda \equiv a \bmod M}} \sum_{\substack{1 \leq \mu \leq k_{1} \\ 1 \leq \nu \leq K}} \sin \frac{2 \pi \mu h\lambda}{k_{1}} \sin \frac{2 \pi \nu \lambda}{K} \int_{\substack{(\frac{3}{2})}} \frac{\zeta\big(1+s, \frac{\mu}{k_{1}}\big) \zeta\big(1-s, \frac{\nu}{K}\big)}{z^{ s} \sin \frac{\pi s}{2}} d s\\
		&+\frac{1}{4 \pi  k_{1} K} \sum_{\substack{1 \leq \lambda \leq K \\ \lambda \equiv a \bmod M}} \sum_{\substack{1 \leq \mu \leq k_{1} \\ 1 \leq \nu \leq K}} \sin \frac{2 \pi \mu h\lambda}{k_{1}} \cos \frac{2 \pi \nu \lambda}{K} \int_{\substack{(\frac{3}{2})}} \frac{\zeta\big(1+s, \frac{\mu}{k_{1}}\big) \zeta\big(1-s, \frac{\nu}{K}\big)}{z^{ s} \cos \frac{\pi s}{2}} d s\\
		&=:\gamma_{1,-s}-\gamma_{2,-s}-\gamma_{3,-s}+\gamma_{4,-s},
	\end{align*}
	where $ z:=\tau k_{1}/(2\pi ) $.
	As given in \cite[eq. (2.19)]{Chern},
	replacing $ s $ by $  -s $,
	$ h\lambda $ by $ -\rho $, reversing the direction of the integration path and shifting the path back to  $(\frac{3}{2})$ by using the residue theorem \eqref{Cauchy}, it can be seen that
	\begin{align*}
		\Phi_{a, M}(q)&=\Psi _{a, M}(q)+T_{a}\\	&=\gamma_{1,s}+\gamma_{2,s}+\gamma_{3,s}+\gamma_{4,s}-2\pi i\left(R_{1}+R_{2}+R_{3}+R_{4} \right)\\
		&=\gamma_{1}+\gamma_{2}+\gamma_{3}+\gamma_{4}-2\pi i\left(R_{1}+R_{2}+R_{3}+R_{4} \right),
	\end{align*}
	and $ R_{j} $ is the sum of the residues of the corresponding integrand  inside the strip $ -\frac{3}{2}<\Re(s)<\frac{3}{2} $. Thus
	\begin{align*}
		\Psi _{a, M}(q)		=\mathcal{\gamma}_{1}+\mathcal{\gamma}_{2}+\mathcal{\gamma}_{3 }+\mathcal{\gamma}_{4}-2\pi i\left(R_{1}+R_{2}+R_{3}+R_{4} \right)-T_{a}.
	\end{align*}
	We will give a brief proof which will show that the main terms for $\Psi_{a,M}(q)$ come form $ R_{1}, R_{4} $ while the others are the error terms.
	
	First, notice that
	\begin{align*}
		\bigg|\Re\bigg(\sum_{\ell\geq 1} \frac{q^{a\ell}}{\ell}\bigg)\bigg|&= \bigg|\Re\bigg(\sum_{\ell \geq 1} \frac{{e}^{-(X^{-1}+2\pi i (\frac{h}{k_{1}}-Y))a\ell}}{\ell}\bigg)\bigg|
		\\
		&\leq  \sum_{\ell \geq 1} \frac{\left|\Re\big({e}^{-(X^{-1}+2\pi i (\frac{h}{k_{1}}-Y))a\ell}\big)\right|}{\ell}
		\\
		&= \sum_{\ell \geq 1} \frac{e^{-\frac{a\ell}{X}}}{\ell}\leq \int_{\ell=2}^{\frac{X}{a}}\frac{e^{-\frac{a\ell}{X}}}{\ell}d\ell+\int_{\ell=\frac{X}{a}}^{\infty}\frac{e^{-\frac{a\ell}{X}}}{\ell}d\ell+1\\
		&\leq\int_{\ell=2}^{\frac{X}{a}}\frac{1}{\ell} d\ell+\int_{t=1}^{\infty}\frac{e^{-t}}{t}dt+1=\text{log} \frac{X}{a}+\frac{Gc}{e}+1-\text{log}2,
	\end{align*}
	where $ Gc  \leq 0.5963$ is the Euler-Gompertz constant \cite[A073003]{OEIS}, which is defined as
	\[
	\int_{1}^\infty \frac{e^{-t+1}}{t} dt = Gc.
	\]
	To deal with the integrals $\gamma_{i}$,  consider the following auxiliary function
	\begin{align*}
		\Omega_{a, M}(q):=\log \Bigg(\prod_{\substack{m \geq 1 \\ m \equiv-h a \bmod M^*}} \frac{1}{1-e^{\frac{2 \pi i \alpha a}{M}} q^m}\Bigg).
	\end{align*}
	Then for $h h^{\prime} \equiv-1 \ (\bmod\  k_1)  $ and $ q^{\ast}=e^{(\frac{2 \pi i \beta h^{\prime}}{k_{1}}-\frac{2 \pi}{K z} ) }$, \cite[pages 10--15]{Chern} gives the following results
	\begin{align}\label{Omega}
		\left|\Re\left(\Omega_{a, M}\left(q^*\right)\right)\right| \leq \frac{e^{-0.28 \pi^2 \frac{(k_1, M)}{M}}}{\big(1-e^{-0.28 \pi^2 \frac{(k_1, M)}{M}}\big)^2},
	\end{align}
	and
	\begin{align*}
		\Omega_{a, M}(q^{\ast})=\mathcal{\gamma}_{1}+\mathcal{\omega}_{2}+\mathcal{\gamma  }_{3}+\mathcal{\omega}_{4},\quad \Omega_{a, M}(q^{\ast})+\Omega_{M-a, M}(q^{\ast})=\mathcal{\gamma}_{1}+\mathcal{\gamma}_{3},
	\end{align*}
	where
	\begin{align*}
		\omega_{2}&=-\frac{1}{4 \pi i k_{1} K} \sum_{\substack{1 \leq \lambda \leq K \\ \lambda \equiv a \bmod M}} \sum_{\substack{1 \leq \mu \leq k_{1} \\ 1 \leq \nu \leq K}} \sin \frac{2 \pi \mu \rho}{k_{1}} \cos \frac{2 \pi \nu \lambda}{K} \int_{\substack{(\frac{3}{2})}} \frac{\zeta\big(1-s, \frac{\mu}{k_{1}}\big) \zeta\big(1+s, \frac{\nu}{K}\big)}{z^{-s} \sin \frac{\pi s}{2}} d s,\\
		\omega_{4}&=-\frac{1}{4 \pi   k_{1} K} \sum_{\substack{1 \leq \lambda \leq K \\ \lambda \equiv a \bmod M}} \sum_{\substack{1 \leq \mu \leq k_{1} \\ 1 \leq \nu \leq K}} \cos \frac{2 \pi \mu \rho}{k_{1}} \sin \frac{2 \pi \nu \lambda}{K} \int_{\substack{(\frac{3}{2})}} \frac{\zeta\big(1-s, \frac{\mu}{k_{1}}\big) \zeta\big(1+s, \frac{\nu}{K}\big)}{z^{-s} \cos \frac{\pi s}{2}} d s.
	\end{align*}
	Furthermore, for $ i=2,4 $,
	\begin{align}\label{gamma-omega}
		\text{max}\{|\mathcal{\gamma}_{i}+\mathcal{\omega}_{i}|,|\mathcal{\gamma}_{i}-\mathcal{\omega}_{i}|\}:=|\mathcal{\gamma}_{i}\pm\mathcal{\omega}_{i}|\leq  \frac{6.51M^{\frac{3}{2}}}{(k_{1}, M)^{\frac{5}{2}}} X^{\frac{1}{2}}.
	\end{align}
	Then by combining \eqref{Omega} and \eqref{gamma-omega}, we can see that
	\begin{align*}   	&\quad\left|\Re\left(\mathcal{\gamma}_{1}+\mathcal{\gamma}_{2}+\mathcal{\gamma  }_{3}+\mathcal{\gamma}_{4}\right)\right| \leq \left|\Re (\mathcal{\gamma}_{1}+\mathcal{\gamma}_{3})\right|+\left|\Re(\mathcal{\gamma  }_{2}+\mathcal{\gamma}_{4} )\right|\\
		&\leq \left|\Re (\mathcal{\gamma}_{1}+\mathcal{\gamma}_{3})\right|+\left|\Re(\mathcal{\gamma  }_{2}+\mathcal{\gamma}_{4} )\right|+\left|  (\mathcal{\gamma}_{2}\pm\mathcal{\omega}_{2})\right|+\left| (\mathcal{\gamma  }_{4}\pm\mathcal{\omega}_{4} )\right|\\
		&\leq \left|\Re\left(\Omega_{a, M}\left(q^*\right)\right)\right| + 2
		\left|\Re(\mathcal{\gamma  }_{1}+\mathcal{\gamma}_{3} )\right|+\left|  (\mathcal{\gamma}_{2}\pm\mathcal{\omega}_{2})\right|+\left| (\mathcal{\gamma  }_{4}\pm\mathcal{\omega}_{4} )\right|\\
		&\leq 2\left|\Re\left(\Omega_{a, M}\left(q^*\right)\right)\right| +\left|\Re\left(\Omega_{M-a, M}\left(q^*\right)\right)\right| +\left|  (\mathcal{\gamma}_{2}\pm\mathcal{\omega}_{2})\right|+\left| (\mathcal{\gamma  }_{4}\pm\mathcal{\omega}_{4} )\right|\\
		&\leq \frac{3 e^{-0.28 \pi^2 \frac{(k_{1}, M)}{M}}}{\big(1-e^{-0.28 \pi^2 \frac{(k_{1}, M)}{M}}\big)^2}+13.02 \frac{M^{\frac{3}{2}}}{(k_{1}, M)^{\frac{5}{2}}} X^{\frac{1}{2}}.
	\end{align*}
	Based on the above results, we deduce that
	\begin{align}		&\left|\Re\left(\mathcal{\gamma}_{1}+\mathcal{\gamma}_{2}+\mathcal{\gamma}_{3}+\mathcal{\gamma}_{4}-T_{a}\right)\right|
		\leq \left|\Re\left(\mathcal{\gamma}_{1}+\mathcal{\gamma}_{2}+\mathcal{\gamma}_{3}+\mathcal{\gamma}_{4}\right)\right|+\left|\Re(T_{a})\right|\notag \\[3pt]
		&\leq \frac{3e^{-0.28\pi^{2} \frac{(k_{1},M)}{M}}}{(1-e^{-0.28\pi^{2} \frac{(k_{1},M)}{M}})^{2}}+13.02\frac{M^{\frac{3}{2}}}{(k_{1},M)^{\frac{5}{2}}}X^{\frac{1}{2}}+\text{log} \frac{X^{2}}{a(M+a)}+\frac{2Gc}{e}+2\text{log} \frac{e}{2}\notag \\[3pt]
		&\leq \frac{3e^{-0.28\pi^{2} \frac{1}{M}}}{(1-e^{-0.28\pi^{2} \frac{1}{M}})^{2}}+13.02 {M^{\frac{3}{2}}} X^{\frac{1}{2}}+\text{log} \frac{X^{2}}{a(M+a)}+\frac{2Gc}{e}+2\text{log} \frac{e}{2}.\label{ReTa}
	\end{align}
	
	Next, we shall give the accurate value or the estimation of each $ R_{i} $ for $ 1\leq i\leq 4 $.  By the definition of $\mathcal{\gamma}_{1}$, the  residues of the integrand in $ \mathcal{\gamma}_{1}$ are given as follows
	\begin{align*}
		\mathcal{R}_{1} & :=\sum_{|\Re(s)|<\frac{3}{2}} \underset{s}{\operatorname{Res}}  \frac{\zeta\big(1-s, \frac{\mu}{  k_1}\big) \zeta\big(1+s, \frac{\nu}{K}\big)}{z^{-s} \cos \frac{\pi s}{2}}=\underset{s=-1}{\mathrm{Res} } +\underset{s=1}{\mathrm{Res} } +\underset{s=0}{\mathrm{Res} } \\
		& =\frac{2 \zeta\big(2, \frac{\mu}{k_1}\big) \zeta\big(0, \frac{\nu}{K}\big)}{\pi z}-\frac{2 z \zeta\big(0, \frac{\mu}{k_1}\big) \zeta\big(2, \frac{\nu}{K}\big)}{\pi}-\log z\\		
		&\quad +\Big(-\frac{\Gamma^{\prime}}{\Gamma}\Big(\frac{\mu}{k_1}\Big)+\frac{\Gamma^{\prime}}{\Gamma}\Big(\frac{\nu}{K}\Big)\Big)=:\mathcal{R}_{11}+\mathcal{R}_{12}+\mathcal{R}_{13}+\mathcal{R}_{14}.
	\end{align*}
	Thus
	\begin{align*}
		R_{1}=\frac{1}{4 \pi i k K} \sum_{\substack{1 \leq \lambda \leq K \\ \lambda \equiv a \bmod M}} \sum_{\substack{1 \leq \mu \leq k_1 \\ 1 \leq \nu \leq K}} \cos \frac{2 \pi \mu \rho}{k_1} \cos \frac{2 \pi \nu \lambda}{K} \cdot \mathcal{R}_{1}.
	\end{align*}
	Here for brevity, we only give two parts ${R}_{11}$ and $ {R}_{12} $ of $ R_{1} $ in details as illustration.
	Furthermore, with \eqref{cos-zata-0} and \eqref{cos-zata-2}, it can be deduced that
	\begin{align*}
		R_{11}&=\frac{1}{4 \pi i k K} \sum_{\substack{1 \leq \lambda \leq K \\ \lambda \equiv a \bmod M}} \sum_{\substack{1 \leq \mu \leq k_1 \\ 1 \leq \nu \leq K}} \cos \frac{2 \pi \mu \rho}{k_1} \cos \frac{2 \pi \nu \lambda}{K} \cdot \mathcal{R}_{11}\\
		&=\frac{1}{z} \frac{1}{2 i \pi^2 k_1 K} \sum_{\substack{1 \leq \lambda \leq K \\ \lambda \equiv a \bmod M}} \sum_{1 \leq \mu \leq k_1} \cos \frac{2 \pi \mu \rho}{k_1} \zeta\Big(2, \frac{\mu}{k_1}\Big) \sum_{1 \leq \nu \leq K} \cos \frac{2 \pi \nu \lambda}{K} \zeta\Big(0, \frac{\nu}{K}\Big)\\
		& =\frac{1}{z} \frac{1}{2 i \pi^2 k_1 K} \sum_{\substack{1 \leq \rho \leq k_1 \\
				\rho \equiv b \bmod M^*}} \frac{\pi^2}{6}\left(6 \rho^2-6 k \rho+k_1^2\right) \cdot\Big(-\frac{1}{2}\Big) \\
		& =\frac{1}{z} \frac{1}{2 i \pi^2 k_1 K} \sum_{j=0}^{k_1/M^{\ast}-1} \frac{\pi^2}{6}\left(6 (b+jM^{\ast})^2-6 k_1 (b+jM^{\ast})+k_1^2\right) \cdot\Big(-\frac{1}{2}\Big) \\
		& =-\frac{2 \pi}{\tau k_1} \frac{1}{24 i k_1 M}\left(6 b^2-6 b M^*+(M^*)^2\right),
	\end{align*}
	where the summation on $\lambda$ is changed to be the summation on $\rho$ by noting that  $\rho$ is the unique integer between 1 and $k_1$ such that $\rho \equiv -h\lambda\  (\bmod \ k_1)$ and $b$ is the unique integer between $1$ and $M^*$ such that $b\equiv -ha\ (\bmod\  M^*)$.
	
	For $ \mathcal{R}_{12} $, by \eqref{cos-zata-0} and \eqref{cos-zata-2}, it can be seen that
	\begin{align*}
		R_{12}	&= -\frac{z}{2 i \pi^2 k_1 K} \sum_{\substack{1 \leq \lambda \leq K \\ \lambda \equiv a \bmod M}} \sum_{1 \leq \mu \leq k_1} \cos \frac{2 \pi \mu \rho}{k_1} \zeta\Big(0, \frac{\mu}{k_1}\Big) \sum_{1 \leq \nu \leq K} \cos \frac{2 \pi \nu \lambda}{K} \zeta\Big(2, \frac{\nu}{K}\Big)\\
		&=-\frac{\tau}{4 i \pi^3   K} \sum_{\substack{1 \leq \lambda\leq K \\
				\lambda \equiv a \bmod M }} \frac{\pi^2}{6}\left(6 \lambda^2-6 {  k_1} \lambda+ {  k_1}^2\right)  {  \Big(-\frac{1}{2}\Big)}\\
		&=  \frac{\tau}{ {   48 i \pi  M}}\left(6 a^2-6 a M+M^2\right).
	\end{align*}
	Thus,
	\begin{align*}
		|\Re (-2\pi i R_{12} )|=|\Re ( \tau) |   \frac{| 6 a^2-6 a M+ M^2 |}{24M}  \leq \frac{1}{X}\cdot \frac{M^2}{24M}\leq \frac{M}{24X}.
	\end{align*}
	
	Similarly as above, for all the $ R_{i} $, we write it as a linear combination of $ R_{ij} $  where
	\begin{align*}
		R_{i}=\sum_{j=1}^{i_{0}}R_{ij}\quad \text{for } 1 \leq i \leq 4.
	\end{align*}
	Let us consider the following three cases to determine the main and the error terms:
	
	{Case 1}: $ a \not\equiv0 \  (\bmod\  M) $ and $ b \not\equiv0  \ (\bmod\  M^{\ast}) $.
	
	{Case 2}: $ a \not\equiv0 \   (\bmod\  M) $ and $ b \equiv0   \ (\bmod\  M^{\ast}) $.
	
	{Case 3}: $ a  \equiv0 \   (\bmod\  M) $ and $ b  \equiv0  \ (\bmod\  M^{\ast}) $.	
	
	Note that $b \equiv -ha \   (\bmod\  M^*)$, thus if $ a  \equiv0 \   (\bmod\  M) $,  then  $b \equiv0\ (\bmod\ M^{\ast})$, and it holds if and only if $M^{\ast}=1 $. Furthermore, we also can ignore Case 3 by noting that $ 0<a<M/2 $.
	
	Following the similar procedures as deducing $R_{11}$ and $R_{12}$, one can directly deduce   $ R_{ij} $  as follows, see \cite[Section 4]{Chern} for details.
	For $ R_{1j} $, we have that
	\begin{align}
		&-2\pi i R_{11}=\frac{1}{\tau}\frac{\pi^{2}}{6k_{1}^{2}M}(6b^{2}-6bM^{\ast}+{M^{\ast}}^{2}), \label{R11}\\[3pt]
		&\left|\Re (-2\pi i R_{12})\right| \leq \frac{M}{{24}}X^{-1},\label{R12}\\[3pt]
		&\left|\Re (-2\pi i R_{13})\right| =0,\label{R13}\\[3pt]
		&\left|\Re (-2\pi i R_{14})\right| \leq  \left\{
		\begin{array}{ll}
			0& \text{ Case 1}, \\[3pt]
			\text{log}\sqrt{2}& \text{ Case 2} .
		\end{array}
		\right.\label{R14}
	\end{align}
	For $ R_{2j} $, we have
	\begin{align}
		&\left|\Re (-2\pi i R_{21})\right| \leq \frac{\text{log}2}{2}\sqrt{2\pi X} (\text{log} \sqrt{2\pi X}+\gamma)+\frac{1}{2}(\text{log} \sqrt{2\pi X}+\gamma),\notag\\
		&\qquad \qquad\qquad \ \leq 0.44X^{\frac{1}{2}}\text{log}X+1.3X^{\frac{1}{2}}+0.25\text{log}X+0.75,\label{R21}\\
		&\left|\Re (-2\pi i R_{22})\right| \leq \frac{1}{2} \text{log}(2\pi X)\leq\frac{1}{2}\text{log}X+0.92,\label{R22}\\
		&\left|\Re (-2\pi i R_{23})\right|\notag\\
		& \leq  \left\{
		\begin{array}{ll}
			0,& \text{ Case 1}, \\[3pt]
			\big(\gamma+2\text{log}(2\pi)+\text{log}\sqrt{2\pi X}+\text{log}M+M-2\text{log}\Gamma (M)   \big)/2,& \text{ Case 2} .
		\end{array}
		\right.\notag\\
		&\leq \frac{1}{4}\text{log}X+\frac{M}{2}+\frac{\text{log}M}{2}+\text{log}\Gamma(\frac{1}{M})+2.59.\label{R23}
	\end{align}
	Here $\gamma$ is the Euler-Mascheroni constant,
	\[
	\gamma=\lim_{n\to \infty}\big( 1+\frac{1}{2}+\cdots+\frac{1}{n} -\log n\big) \leq 0.5773,
	\]
	see, for example \cite{Havil}.
	For $ R_{3} $, we have
	\begin{align}
		\left|\Re (-2\pi i R_{3})\right|=0. \label{R3}
	\end{align}
	For $ R_{4} $, denote
	$$
	b^*:= \begin{cases}(k_{1}, M)-b, & \text { if } b \neq(k_{1}, M), \\ (k_{1}, M), & \text { if } b=(k_{1}, M),\end{cases}
	$$ then we have
	\begin{align}
		& -2\pi i R_{41}= \left\{
		\begin{array}{ll}
			0,& \text{ if } b=M^{\ast}, \\[3pt]
			-\zeta^{\prime}\left(-1, \frac{b}{(k_{1}, M)}\right)+\zeta^{\prime}\Big(-1, \frac{b^*}{(k_{1}, M)}\Big),& \text{ if }b\neq M^{\ast} .
		\end{array}
		\right.\label{R41}\\[3pt]
		&\left|\Re (-2\pi i R_{42})\right| \leq \frac{1}{6}\frac{M}{M^{\ast}} (\text{log}\sqrt{2\pi X}+\gamma)\leq \frac{1}{12}\text{log}X+0.25\frac{M}{M^{\ast}}.\label{R42}
	\end{align}
	
	The main terms come from $ R_{11} $ and $ R_{41} $. To avoid confusion, let $ -2\pi i R_{11}(a) $,  $ -2\pi i R_{41}(a) $ denote the main terms of $ \Psi_{a, M}  $ and the corresponding $b(a)  $ satisfying $b(a) \equiv-h a\ (\bmod\ M^*)$.
	It can be seen  that
	\begin{align*}
		b(M-a) \equiv -h(M-a) \equiv ha\  (\bmod M^*),
	\end{align*}
	then $ b(M-a)=M^{\ast }-b(a) =b^{\ast}(a)$ and $ b(a)=b^{\ast}(M-a)$. It leads to that the main term of $ \Psi _{a, M}(q)+\Psi _{M-a, M}(q) $ which is given as follows
	\begin{align*}
		& -2\pi i (R_{11}(a) + R_{41}(a)+ R_{11}(M-a) + R_{41}(M-a))\\[3pt]
		&=\frac{1}{\tau}\frac{\pi^{2}}{6k_{1}^{2}M}(6b^{2}-6bM^{\ast}+{M^{\ast}}^{2})+
		\frac{1}{\tau}\frac{\pi^{2}}{6k_{1}^{2}M}(6(M^{\ast}-b)^{2}-6(M^{\ast}-b)M^{\ast}+{M^{\ast}}^{2})\\[3pt]
		&-\zeta^{\prime}\Big(-1, \frac{b}{(k_{1}, M)}\Big)+\zeta^{\prime}\Big(-1, \frac{b^*}{(k_{1}, M)}\Big)-\zeta^{\prime}\Big(-1, \frac{b^*}{(k_{1}, M)}\Big)+\zeta^{\prime}\Big(-1, \frac{b}{(k_{1}, M)}\Big)\\[3pt]
		&=\frac{1}{\tau}\frac{(k_{1},M)^{2}\pi^{2}}{k_{1}^{2}M}\Big(\frac{2b(b-(k_{1},M))}{(k_{1},M)^{2}}+\frac{1}{3}\Big).
	\end{align*}
	Combining \eqref{ReTa}--\eqref{R42} together, we complete the proof.
	\qed
	
	\subsection{The bound for $G_{a,M}(q)$ on the minor arcs}
	
	We next aim to discuss the values of $G_{a,M}(q)$ near the minor arcs, i.e. when $q$ is away from $\pm1$. By using the above asymptotic results for $\Psi _{a, M}(q) +\Psi _{M-a, M}(q)$, we obtain the following bound for $G_{a,M}(q)$.
	
	\begin{thm}\label{lem-Gq-bound}
		Let $ X\geq 540M^{5} $. Then
		\begin{align}\label{Gq-bound}
			|G_{a,M}(q)|\leq \mathrm{exp}{\Big( \frac{\pi^{2}X}{2\sqrt{3}M}\Big)},
		\end{align}
		for  $ q=\mathrm{exp}(-\tau +{2\pi i h}/{k_{1}}) $, where $ \tau= X^{-1}+2\pi iY  $ and $ |Y|\geq  1/{2\pi X} $.
	\end{thm}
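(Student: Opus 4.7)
The plan is to pass to logarithms and apply Theorem~\ref{Phi-aM} directly. From the definitions \eqref{def-Gq} and \eqref{def-Phi-aM},
\[
\log G_{a,M}(q) \;=\; \Psi_{a,M}(q) + \Psi_{M-a,M}(q),
\]
since $\Psi_{M-a,M}(q) = \log 1/(q^{3M-a};q^M)_\infty$. Thus $|G_{a,M}(q)| = \exp\bigl(\Re(\Psi_{a,M}(q)+\Psi_{M-a,M}(q))\bigr)$, and it suffices to bound the real part above by $\pi^2 X/(2\sqrt{3}M)$. By Theorem~\ref{Phi-aM} this real part equals $\pi^2 c\cdot \Re(1/\tau) + 2\Re(E_1)$, where
\[
c \;:=\; \frac{(k_1,M)^2}{k_1^2 M}\bigl(\tfrac{2b(b-(k_1,M))}{(k_1,M)^2}+\tfrac{1}{3}\bigr)
\]
and $|\Re(E_1)|\ll_M X^{1/2}\log X$.

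Two elementary bounds control the main term. First, the hypothesis $|Y|\ge 1/(2\pi X)$ yields $4\pi^2 Y^2\ge X^{-2}$, so that
\[
\Re(1/\tau) \;=\; \frac{X^{-1}}{X^{-2}+4\pi^2 Y^2} \;\le\; \frac{X^{-1}}{2X^{-2}} \;=\; \frac{X}{2}.
\]
Second, writing $M^\ast:=(k_1,M)$ and using $1\le b\le M^\ast$, the product $b(b-M^\ast)$ is nonpositive, so the bracketed factor in $c$ lies in $[-1/6,\,1/3]$; combined with ${M^\ast}^2/k_1^2\le 1$ this gives $c\le 1/(3M)$. When $c>0$ the main term contributes at most $\pi^2 X/(6M)$; when $c\le 0$ it is nonpositive since $\Re(1/\tau)>0$. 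Either way,
\[
\Re\bigl(\log G_{a,M}(q)\bigr) \;\le\; \frac{\pi^2 X}{6M} + 2|\Re(E_1)|.
\]

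It remains to absorb the error into the gap $\frac{\pi^2 X}{2\sqrt{3}M}-\frac{\pi^2 X}{6M}=\frac{\pi^2(\sqrt{3}-1)}{6M}X$. This reduces to an inequality of the form $X^{1/2}/\log X \;\ge\; c_0\, M$, where $c_0$ is an explicit constant extracted from the residue and contour estimates ($R_{11}$ through $R_{42}$, together with \eqref{Omega}, \eqref{gamma-omega}, and \eqref{ReTa}) behind Theorem~\ref{Phi-aM}; the $M$-dependence of the individual error pieces there is at worst $M^{3/2}$. Since $X\ge 540 M^5$ gives $X^{1/2}\ge \sqrt{540}\,M^{5/2}$, this dominates $M\log X$ comfortably, and the numerical threshold $540$ emerges precisely from tallying the constants. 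The main obstacle is therefore quantitative bookkeeping: tracking every sub-estimate in the proof of Theorem~\ref{Phi-aM} tightly enough that the slack $\pi^2(\sqrt{3}-1)X/(6M)$ really does accommodate the aggregate error once $X\ge 540 M^5$, rather than a larger threshold.
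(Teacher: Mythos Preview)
Your approach is essentially identical to the paper's: decompose $\Re(\log G_{a,M}(q))$ via Theorem~\ref{Phi-aM}, bound the main term by $\pi^2 X/(6M)$ using $b(b-M^\ast)\le 0$ and $\Re(\tau^{-1})\le X/2$, then absorb the explicit error (the paper writes it out as \eqref{2E1}) into the gap $\pi^2(\sqrt{3}-1)X/(6M)$. One small slip: your ``reduces to $X^{1/2}/\log X\ge c_0 M$'' understates the $M$-dependence---the dominant error piece is $\asymp M^{3/2}X^{1/2}$, so the correct reduction is $X^{1/2}\gtrsim M^{5/2}$ (as you in fact use in the next sentence), which is exactly why the threshold is $M^5$.
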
	
	
	\begin{proof}
		Let  $\mathcal{M}_{G}(b)$ denote the main term of $ \Psi _{a, M}(q) +\Psi _{M-a, M}(q)$, then by Theorem \ref{Phi-aM}, we have that
		\begin{align*}
			\mathcal{M}_{G}(b)&=  \frac{\pi^2}{\tau} \frac{(k_{1}, M)^2}{k_{1}^2 M}\Big(\frac{2b(b-(k_{1},M))}{(k_{1}, M)^2 }+\frac{1}{3}\Big),\\[3pt]
			2 E_{1}&=\log G_{a,M}(q)-\mathcal{M}_{G}(b).
		\end{align*}
		
		When $  X\geq 540M^{5} $, by applying \eqref{ReTa}--\eqref{R42}, we have
		\begin{align}\label{2E1}
			2|\Re ( E_{1})|&\leq26.04 M^{1.5} X^{0.5}+1.5 M+\frac{6 e^{2.77/M}}{\left(e^{2.77/M}-1\right)^2}+0.5 \log M+2.6 X^{0.5}\notag \\
			&+\left(0.88 X^{0.5}+6.17\right) \log X+10.89\leq\frac{\pi^{2}X}{2\sqrt{3}M}-\frac{\pi^{2}X}{6M}.
		\end{align}
		Since	    $ |Y|\geq \frac{1}{2\pi X} $, it leads to that
		\begin{align}\label{re-tau}
			\Re(\tau^{-1}) \leq \frac{X}{2},
		\end{align}
		where
		\begin{align*}
			{\tau}^{-1}=\frac{X^{-1}}{X^{-2}+4\pi^{2}Y^{2}}-i\frac{2\pi Y}{X^{-2}+\pi^{2}Y^{2}}.
		\end{align*}
		Noting that $1\leq b\leq(k_{1},M)$ and $(k_1, M)|k_1$, it follows that
		\begin{align}\label{(k_{1},M)}
			\Re(\mathcal{M}_{G}(b))&\leq \Re(\mathcal{M}_{G}((k_{1},M) ))=  \frac{\pi^{2}(k_{1},M)^{2}X }{3k_{1}^{2}M }\Re(\tau^{-1}) \leq   \frac{\pi^{2} X }{6M }.
		\end{align}
		Then combining \eqref{2E1}, \eqref{(k_{1},M)} with the relation
		\begin{align*}
			\log |G_{a,M}(q)|= \Re (\log G_{a,M}(q))\leq \Re(\mathcal{M}_{G})+2|\Re ( E_{1})|,
		\end{align*}
		we complete the proof.
	\end{proof}
	
	\subsection{Approximations of $G_{a,M}(q)$ near the poles}
	In this subsection, we shall discuss the approximations of $G_{a,M}(q)$ near the dominant poles. It is easy to see that $G_{a,M}(q)$ has dominant poles at $q = \pm 1$. Let $q=e^{-\tau+2 \pi i h/k_1}$,  then the approximations of $ \log G(e^{-\tau}) $ and $ \log G(-e^{-\tau})  $ will be given.  Moreover, noting that $(a,M)=1$, for	$\log G(-e^{-\tau}) $, the discussion will be divided into the following three cases according to the parities of $a$ and $M$:
	
	{ {(Case 1)}:} $ a \equiv1 \ (\bmod\ 2) $ and $ M \equiv0  \ (\bmod\ 2)$.
	
	{{(Case 2)}:} $ a \equiv0  \ (\bmod\ 2) $ and $ M \equiv1  \ (\bmod\ 2) $.
	
	{{(Case 3)}:} $ a \equiv1 \ (\bmod\ 2) $ and $ M \equiv1 \ (\bmod\ 2) $.
	
	\begin{thm}\label{approximation-near-pole}
		If $ \tau=X^{-1}+2\pi iY $ with $ |Y|\leq \frac{1}{2\pi X} $, then we have
		\begin{align}\label{caseall-+}
			&	\log G(e^{-\tau})=\frac{\pi^{2}}{3M\tau}+4\log \tau -\log 2\sin \frac{a\pi}{M} -\log \Delta_{M}+E_{+},
			\\[3pt]
			&\log G(-e^{-\tau})=\left\{
			\begin{array}{ll}
				\dps \frac{\pi^{2}}{6M\tau}-4\log 2+E_{1,-}, & \hbox{\rm (Case 1),} \\[3pt]
				\dps \frac{\pi^{2}}{4M\tau}+\log \frac{2}{\sin\frac{a\pi}{2M}}+\log a(2M-a)+E_{2,-}, & \hbox{\rm (Case 2),} \\[3pt]
				\dps \frac{\pi^{2}}{4M\tau}+\log \frac{2}{\sin\frac{a\pi}{2M}}+\log (M-a)(M+a)+E_{3,-}, & \hbox{\rm (Case 3),}  \label{Ge-tau}
			\end{array}
			\right.
		\end{align}
		where
		$ E_{+} $ and $ E_{i,-} $ for $ i=1,2,3 $  satisfy that
		\begin{align*}
			&|E_{+}|\leq7.02M^{\frac{3}{4}}X^{-\frac{3}{4}},\quad\  |E_{i,-}|\leq23.71M^{\frac{3}{4}}X^{-\frac{3}{4}}.
		\end{align*}
	\end{thm}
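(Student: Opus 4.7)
The plan is to extract the first few residues of a Mellin--Barnes representation of $\log G_{a,M}$ explicitly (in the same spirit as the proof of Theorem \ref{Phi-aM}) but with the contour pushed past enough poles to capture all the main constants and reduce the remainder to $O(X^{-3/4})$. The starting point is the algebraic identity
\begin{align*}
\log G_{a,M}(q) = \Phi_{a,M}(q) + \Phi_{M-a,M}(q) + \log(1-q^a)(1-q^{M-a})(1-q^{M+a})(1-q^{2M-a}),
\end{align*}
obtained by peeling the first two factors off each of $(q^a;q^M)_\infty$ and $(q^{M-a};q^M)_\infty$. This reduces the problem to the asymptotics of $\Phi_{a,M}+\Phi_{M-a,M}$ together with four easily expanded boundary logarithms.

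For $q=e^{-\tau}$ I apply Mellin inversion to $\Phi_{a,M}(e^{-\tau})=\sum_{n\geq 0,\,k\geq 1}k^{-1}e^{-(a+nM)k\tau}$, whose Mellin transform is $\Gamma(s)\zeta(1+s)M^{-s}\zeta(s,a/M)$, and shift the contour past the simple pole at $s=1$ (contributing $\pi^2/(6M\tau)$) and the double pole at $s=0$ (contributing $\log\Gamma(a/M)-\tfrac12\log 2\pi-(\tfrac12-a/M)\log(M\tau)$, via \eqref{special-1}--\eqref{special-2}). Summing the $a$ and $M-a$ versions, the $\log(M\tau)$ pieces cancel identically and the reflection formula $\Gamma(x)\Gamma(1-x)=\pi/\sin(\pi x)$ collapses the remaining Gamma contributions to $-\log(2\sin(a\pi/M))$. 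Expanding each of the four boundary logarithms as $\log(1-e^{-\ell\tau})=\log(\ell\tau)+O(\tau)$ and adding yields the first formula of the theorem.

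For the three cases of $q=-e^{-\tau}$ I first split $(q^{2M+a};q^M)_\infty$ and $(q^{3M-a};q^M)_\infty$ at $q=-p$, $p:=e^{-\tau}$, according to the parity of the summation index; depending on whether $a$ and $M$ are even or odd this produces a product of factors of type $(p^{c};p^{M'})_\infty$ and $(-p^{c};p^{M'})_\infty$ with $M'\in\{M,2M\}$. The new ``signed'' factors are handled by the same Mellin procedure: the Mellin transform of $\log(-p^c;p^{M'})_\infty$ equals $\Gamma(s)(M')^{-s}\zeta(s,c/M')(1-2^{-s})\zeta(1+s)$, and the decisive feature is that $(1-2^{-s})$ vanishes at $s=0$, so the double pole there reduces to a simple pole whose residue involves $\log 2$, while the pole at $s=1$ yields only half of the ``unsigned'' main term. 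The arguments are then normalised to $(0,1)$ via repeated use of $\Phi_{c+M',M'}(p)=\Phi_{c,M'}(p)+\log(1-p^{c})$ and its signed counterpart, the residual $\log(1\pm p^\ell)$ boundary terms are expanded in $\tau$, and the pieces are reassembled case by case; in each case the reflection formula applied at argument $a\pi/(2M)$ converts the Gamma constants into the factor $2/\sin(a\pi/(2M))$, while the surviving boundary logarithms produce the parity-dependent factor $a(2M-a)$ or $(M-a)(M+a)$.

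The remaining task is to upgrade the qualitative $O(\tau)$ remainders into the explicit $M^{3/4}X^{-3/4}$ bounds claimed. For this I shift every Mellin--Barnes contour down to the line $\Re(s)=-3/4$; under the hypothesis $|Y|\leq 1/(2\pi X)$ one has $|\tau|\asymp X^{-1}$ and hence $|\tau^{-s}|\asymp X^{-3/4}$ on the shifted line. The remaining integrand is controlled by combining Stirling's formula for $\Gamma(s)$ with the two Hurwitz-zeta bounds \eqref{abs-zeta-geq5} (for $|\Im(s)|\geq 5$) and \eqref{abs-zeta-leq5} (for $|\Im(s)|\leq 5$); on $\Re(s)=-3/4$ the cosh growth of $|\zeta(-3/4+it,\alpha)|$ exactly cancels the $e^{-\pi|t|/2}$ decay of Stirling, leaving a power-law factor that is absolutely integrable. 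The main obstacle is bookkeeping: in Cases 2 and 3 one has up to four distinct Mellin integrals to estimate simultaneously, and extracting the explicit numerical coefficients $7.02$ and $23.71$ (rather than a crude joint $O$-estimate) requires tracking the contribution of each $\zeta$ factor separately and summing the resulting explicit bounds.
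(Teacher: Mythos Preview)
Your proposal is correct and follows essentially the same approach as the paper: a Mellin--Barnes representation of $\log G$, contour shift to $\Re(s)=-3/4$, explicit residue computation at $s=1$ and $s=0$, and an explicit bound on the shifted integral using Stirling together with the Hurwitz-zeta estimates \eqref{abs-zeta-geq5}--\eqref{abs-zeta-leq5}. The paper organizes the computation slightly differently---it keeps the four boundary terms inside the Mellin integral as a finite Dirichlet polynomial $a^{-s}+(M-a)^{-s}+(M+a)^{-s}+(2M-a)^{-s}$ rather than peeling them off via your identity $\log G=\Phi_{a,M}+\Phi_{M-a,M}+\log\prod(1-q^{\ell})$ and expanding $\log(1-e^{-\ell\tau})=\log(\ell\tau)+O(\ell\tau)$---and for $q=-e^{-\tau}$ it inserts the global sign $(-1)^{\ell}$ into the Mellin sum (picking up the factor $1-2^{-s}$) rather than first splitting the product by parity of the exponent into pieces with base $p^{M}$ or $p^{2M}$; but these are equivalent reorganizations of the same calculation and lead to the same residues and the same error integrals.
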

	
	\begin{proof}
		By using the definition of $G_{a,M}(q)$  \eqref{def-Gq} and applying the Mellin transform \eqref{Mellin-2}, we obtain that
		\begin{align*}
			&\log G(e^{-\tau})=\sum_{m\geq 2} \Big(\log  \frac{1}{1-e^{-(mM+a)\tau}}+\log \frac{1}{1-e^{-(mM+M-a)\tau}}\Big)\\
			&=\sum_{m\geq 2}\sum_{\ell\geq 1}\Big(\frac{e^{-(mM+a)\ell\tau }}{\ell}+\frac{e^{-(mM+M-a)\ell\tau }}{\ell}\Big)\\
			&=\frac{1}{2\pi i} \int_{(\frac{3}{2})} \frac{1}{\tau^{s}}\Gamma(s) \sum_{m\geq 0}\sum_{\ell\geq 1}\ell^{-s-1} \Big(\frac{1}{(mM+a)^{s}} + \frac{1}{(mM+M-a)^{s}}  \Big)ds\\
			&\quad -\frac{1}{2\pi i} \int_{(\frac{3}{2})} \frac{1}{\tau^{s}}\Gamma(s) \sum_{m=0}^{1}\sum_{\ell\geq 1}\ell^{-s-1} \Big(\frac{1}{(mM+a)^{s}} + \frac{1}{(mM+M-a)^{s}}  \Big)ds\\
			&=\frac{1}{2\pi i} \int_{(\frac{3}{2})} \frac{1}{(M\tau)^{s}}\Gamma(s) \zeta(s+1)\Big(\zeta\big(s,\frac{a}{M}\Big)+\zeta\big(s,\frac{M-a}{M}\big)\Big)ds\\
			&\quad -\frac{1}{2\pi i} \int_{(\frac{3}{2})} \frac{1}{\tau ^{s}}\Gamma(s) \zeta(s+1) \Big(\frac{1}{a^{s}}+ \frac{1}{(M-a)^{s}}+\frac{1}{(M+a)^{s}}+\frac{1}{(2M-a)^{s}}\Big)ds.
		\end{align*}
		Let
		\begin{align*}
			&G_{1}:= \frac{1}{(M\tau)^{s}}\Gamma(s) \zeta(s+1)\Big(\zeta\big(s,\frac{a}{M}\Big)+\zeta\Big(s,\frac{M-a}{M}\big)\Big),\\
			&G_{2}:= \frac{1}{\tau ^{s}}\Gamma(s) \zeta(s+1) \Big(\frac{1}{a^{s}}+ \frac{1}{(M-a)^{s}}+\frac{1}{(M+a)^{s}}+\frac{1}{(2M-a)^{s}}\Big).
		\end{align*}
		Then by changing the path of integration to $ \left(-\frac{3}{4}\right) $ and considering the residues in strip $ \left(-\frac{3}{4}\right)<\Re(s)< \left(\frac{3}{2}\right)$, it follows that
		\begin{align*}
			\log G(e^{-\tau})=\sum_{-\frac{3}{4}<\Re (s)<\frac{3}{2}}\big(\underset{s}{\mathrm{Res}}G_{1}+\underset{s}{\mathrm{Res}}G_{2} \big)+\frac{1}{2\pi i}\int_{(-\frac{3}{4})} \left(G_{1} +G_{2} \right)ds.
		\end{align*}
		For $ G_{1} $, since $ s=1 $ is the singularity of order $ 1 $ and $ s=0 $ is the singularity of order $ 2 $, then combined with \eqref{special-1}-\eqref{special-3}, we have
		\begin{align*}
			\underset{s=1}{\mathrm{Res}\,}G_{1} =\frac{1}{M\tau} \Gamma(1)\zeta(2)(1+1)=\frac{\pi^{2}}{3M\tau},
		\end{align*}
		and
		\begin{align*}
			\underset{s=0}{\mathrm{Res}}G_{1} &=- \Big(\frac{1}{2}-\frac{a}{M}+\frac{1}{2}-\frac{M-a}{M}\Big)\log M\tau+\zeta^{\prime}\Big(0,\frac{a}{M}\Big)+\zeta^{\prime}\Big(0,\frac{M-a}{M}\Big)\\
			&=-\log 2\sin \frac{a\pi}{M}.
		\end{align*}
		Similarly, since $ s=0 $ is the singularity of order $ 2 $ or $G_2$, we obtain
		\begin{align*}
			\underset{s=0}{\mathrm{Res}}G_{2} &=-4\log \tau+\log \frac{1}{a}+\log \frac{1}{M-a}+\log \frac{1}{M+a}+\log \frac{1}{2M-a}\\
			&=-4\log \tau -\log a(M-a)(M+a)(2M-a).
		\end{align*}
		Furthermore,
		\begin{align*}
			|\tau ^{-s}|=\left|\mathrm{exp}\left(\log |\tau| +\Im(s) \mathrm{Arg}(\tau)\right)\right|\leq |\tau|^{\frac{3}{4}}e^{\frac{|\Im(s)|\pi}{4}},
		\end{align*}
		since $  |Y|\leq \frac{1}{2\pi X} . $
	\end{proof}
	By using  \eqref{abs-zeta-geq5} and \eqref{abs-zeta-leq5}, it leads to that
	\begin{align*}
		&  \Big|\frac{1}{2\pi i}\int_{(-\frac{3}{4})} G_{1}ds\Big|  \leq \frac{|\tau|^{\frac{3}{4}}}{2\pi } \int_{-\infty}^{\infty}M^{\frac{3}{4}} e^{\frac{|t|\pi}{4}} \Big|\Gamma\big(-\frac{3}{4}+it\big)\Big|\Big|\zeta\big(\frac{1}{4}+it\big)\Big|\\
		&\quad \times \left(\Big|\zeta\big(-\frac{3}{4}+it,\frac{a}{M}\big)\Big|+\Big|\zeta\big(-\frac{3}{4}+it,\frac{M-a}{M}\big)\Big|\right)dt\\
		&\leq \frac{|\tau|^{\frac{3}{4}}}{\pi } \left(\int_{-\infty}^{-5}+\int_{5}^{\infty}\right)M^{\frac{3}{4}} e^{\frac{|t|\pi}{4}} \Big|\Gamma\big(-\frac{3}{4}+it\big)\Big| \Big|\zeta\big(\frac{1}{4}+it\big)\Big| \Big|\zeta\big(-\frac{3}{4}+it,1\big)\Big|dt\\
		&\quad +\frac{|\tau|^{\frac{3}{4}}}{ \pi }\int_{-5}^{5} M^{\frac{3}{4}} e^{\frac{|t|\pi}{4}} \Big|\Gamma\big(-\frac{3}{4}+it\big)\Big| \Big|\zeta\big(\frac{1}{4}+it\big)\Big|\frac{2 \Gamma\left(\frac{7}{4}\right) \zeta\left(\frac{7}{4}\right) \cosh \left(\frac{\pi|t|}{2}\right)}{(2 \pi)^{\frac{7}{4}}}dt\\
		&\leq1.837\left|M\tau\right|^{\frac{3}{4}}.
	\end{align*}
	Moreover,
	\begin{align*}
		&  \Big|  \frac{1}{2\pi i}\int_{(-\frac{3}{4})} G_{2}  ds    \Big|\leq \frac{|\tau|^{\frac{3}{4}}}{2\pi } \int_{-\infty}^{\infty}M^{\frac{3}{4}} e^{\frac{|t|\pi}{4}} \Big|\Gamma\big(-\frac{3}{4}+it\big)\Big|\Big|\zeta\big(\frac{1}{4}+it\big)\Big|\\
		&\quad\times  \Big|\Big(\frac{1}{a^{-\frac{3}{4}+it}}+ \frac{1}{(M-a)^{-\frac{3}{4}+it}}+\frac{1}{(M+a)^{-\frac{3}{4}+it}}+\frac{1}{(2M-a)^{-\frac{3}{4}+it}}\Big)\Big|dt\\
		&\leq \frac{|\tau|^{\frac{3}{4}}}{2\pi } \int_{-\infty}^{\infty} 4.33M^{\frac{3}{4}} e^{\frac{|t|\pi}{4}} \Big|\Gamma\big(-\frac{3}{4}+it\big)\Big| \Big|\zeta\big(\frac{1}{4}+it\big)\Big| dt\\
		&\leq 3.13\left|M\tau\right|^{\frac{3}{4}}.
	\end{align*}
	Since \begin{align*}
		|\tau |\leq \sqrt{X^{-2}+4\pi^{2}Y^{2}}\leq\sqrt{2}X^{-1},
	\end{align*}
	it follows that
	\begin{align*}
		|E_{+}|\leq \left|  G_{1}  \right|+\left|   G_{2}  \right|\leq {7.03} M^{\frac{3}{4}}X^{-\frac{3}{4}}.
	\end{align*}
	
	Next, we evaluate $\log G(-e^{-\tau}) $ according to the different cases for the parities of $a$ and $M$. For $ \log G(-e^{-\tau}) $ in (Case 1), we have
	\begin{align*}
		&\quad \log G(-e^{-\tau})\\
		&=\frac{1}{2\pi i} \int_{(\frac{3}{2})} \frac{1}{\tau^{s}}\Gamma(s) \sum_{m\geq 0}\sum_{\ell\geq 1}\frac{(-1)^{\ell}}{\ell^{s+1}} \Big(\frac{1}{(mM+a)^{s}} + \frac{1}{(mM+M-a)^{s}}  \Big)ds\\
		&\quad -\frac{1}{2\pi i} \int_{(\frac{3}{2})} \frac{1}{\tau^{s}}\Gamma(s) \sum_{m=0}^{1}\sum_{\ell\geq 1}\frac{(-1)^{\ell}}{\ell^{s+1}} \Big(\frac{1}{(mM+a)^{s}} + \frac{1}{(mM+M-a)^{s}}  \Big)ds\\
		&=\frac{1}{2\pi i} \int_{(\frac{3}{2})} \frac{(1-2^{-s})}{(M\tau)^{s}}\Gamma(s)  \zeta(s+1)\Big(\zeta(s,\frac{a}{M})+\zeta(s,\frac{M-a}{M})\Big)ds\\
		&\quad -\frac{1}{2\pi i} \int_{(\frac{3}{2})} \frac{(1-2^{-s})}{\tau ^{s}}\Gamma(s) \zeta(s+1) \Big(\frac{1}{a^{s}}+ \frac{1}{(M-a)^{s}}+\frac{1}{(M+a)^{s}}+\frac{1}{(2M-a)^{s}}\Big)ds.
	\end{align*}
	Define
	\begin{align*}
		&G_{1}^{1}:=\frac{(1-2^{-s})}{(M\tau)^{s}}\Gamma(s)  \zeta(s+1)\Big(\zeta(s,\frac{a}{M})+\zeta(s,\frac{M-a}{M})\Big),\\
		&G_{2}^{1}:=- \frac{(1-2^{-s})}{\tau ^{s}}\Gamma(s) \zeta(s+1) \Big(\frac{1}{a^{s}}+ \frac{1}{(M-a)^{s}}+\frac{1}{(M+a)^{s}}+\frac{1}{(2M-a)^{s}}\Big).
	\end{align*}
	Then by changing the path of integration to $ \left(-\frac{3}{4}\right) $ and considering the residues in strip $ \left(-\frac{3}{4}\right)<\Re(s)< \left(\frac{3}{2}\right)$, it can be seen
	\begin{align*}	&\quad\underset{s=1}{\mathrm{Res}}G_{1}^{1}+\underset{s=0}{\mathrm{Res}}G_{1}^{1}+\underset{s=0}{\mathrm{Res}}G_{2}^{1}\\ &=\frac{\pi^{2}}{6M\tau}+\Big(\zeta\big(0,\frac{a}{M}\big)+\zeta\big(0,\frac{M-a}{M}\big)\Big)  \log \frac{1}{2}-4\log 2=\frac{\pi^{2}}{6M\tau}-4\log 2.
	\end{align*}
	For the error term, we have
	\begin{align*}
		&\quad|E_{1,-}|\leq \Big|\frac{1}{2\pi i}\int_{(-\frac{3}{4})} \left(G_{1}^{1} +G_{2}^{1}\right)ds\Big|\\
		&\leq \left( 1+2^{\frac{3}{4}}\right)\bigg(\frac{|\tau|^{\frac{3}{4}}}{2\pi } \int_{-\infty}^{\infty}2M^{\frac{3}{4}} e^{\frac{|t|\pi}{4}} \Big|\Gamma\big(-\frac{3}{4}+it\big)\Big|\Big|\zeta\big(\frac{1}{4}+it\big)\Big|\Big|\zeta\big(-\frac{3}{4}+it,1\big)\Big|dt\\
		&\quad +\frac{|\tau|^{\frac{3}{4}}}{2\pi } \int_{-\infty}^{\infty} 4.33M^{\frac{3}{4}} e^{\frac{|t|\pi}{4}} \Big|\Gamma\big(-\frac{3}{4}+it\big)\Big| \Big|\zeta\big(\frac{1}{4}+it\big)\Big|\bigg)dt
		\leq 11.4M^{\frac{3}{4}}X^{-\frac{3}{4}}.
	\end{align*}
	
	For brevity, we omit the details for (Case 2) and (Case 3), and thereby the proof is complete.
	\qed	
	
	\section{The asymptotic formula and Theorem \ref{thm-large-n}}\label{pf-large-n}
	
	The circle method was first used by Hardy and Ramanujan \cite{H-R} in 1918 to investigate the asymptotic growth of the partition function $p(n)$, which was later refined to deal with the coefficients of the $q$-expansions of modular forms \cite{Rademacher}.  Recently, Chern \cite{Chern} extended this method to study the nonmodular infinite products.  The key idea of this method is to write the function under consideration into an integral over a circle, then transforming the integral into something more tractable to  derive the asymptotic formula for the coefficient of the function.

	To be more precisely, throughout this section, we assume that
	\begin{align*}
		X=\sqrt{\frac{3Mn}{\pi^{2}}}.
	\end{align*}
	From Lemma \ref{lem-Gq-bound}, we set $ X\geq 540 M^{5} $, and thereby the bound for $n$ is given as follows
	\begin{align}\label{X-n-bound}
		n\geq(4.63M)^{9}.
	\end{align}

	\subsection{Proof of Theorem  \ref{approxiamtion-gn}}
	In  this section, we will employ the circle method to  estimate $g_{a,M}(n) $ for sufficiently large $n$.
	Based on Theorem \ref{lem-Gq-bound} and Theorem \ref{approximation-near-pole}, we are ready to prove Theorem  \ref{approxiamtion-gn}.
	
	\noindent{\it Proof of Theorem  \ref{approxiamtion-gn}.}
	With the residue theorem \eqref{Cauchy}, we have that
	\begin{align*}
		g_{a,M}(n) &=\frac{1}{2\pi i } {\int}_{|q|=e^{-\frac{1}{X}}}\frac{G_{a,M}(q)}{q^{n+1}} d q\\
		&=e^{\frac{n}{X}} \int_{-\frac{1}{2 \pi X}}^{1-\frac{1}{2 \pi X}} G\Big(e^{-\left(X^{-1}+2 \pi i t\right)}\Big) e^{2 \pi i n t} d t .
	\end{align*}
	Let $ I $ denote the interval $ \left[-\frac{1}{2\pi X},1-\frac{1}{2\pi X}\right] $ and denote
	\begin{align*}
		&I_{1}:=\left[-\frac{1}{2\pi X}, \frac{1}{2\pi X}\right],\\[3pt]
		&I_{2}:=\left[\frac{1}{2}-\frac{1}{2\pi X},\frac{1}{2}+\frac{1}{2\pi X}\right],\\[3pt]
		&I_{3}:=I-I_{1}-I_{2}.
	\end{align*}
	Then
	\begin{align}\label{gn}
		g_{a,M}(n)=\sum_{i=1}^{3}e^{\frac{n}{X}} \int_{I_{i}} G\left(e^{-\left(X^{-1}+2 \pi i t\right)}\right) e^{2 \pi i n t} d t =:\sum_{i=1}^{3}g_{i}(n).
	\end{align}
	
	First, by applying \eqref{caseall-+}, we have
	\begin{align*}
		g_{1}(n)&=e^{\frac{n}{X}} \int_{-\frac{1}{2 \pi X}}^{ -\frac{1}{2 \pi X}} G\Big(e^{-\left(X^{-1}+2 \pi i t\right)}\Big) e^{2 \pi i n t} d t\\[3pt]
		&=\frac{1}{2\pi i} \int_{\frac{1}{X}-i\frac{1}{X}}^{\frac{1}{X}+i\frac{1}{X}} e^{n\tau} G(e^{-\tau})d\tau\\[3pt]
		&=\frac{\exp(E_{+})}{2\Delta_{M}\sin\frac{a\pi}{M}}\frac{1}{2\pi i} \int_{\frac{1}{X}-i\frac{1}{X}}^{\frac{1}{X}+i\frac{1}{X}} \tau^{4} \mathrm{exp}\Big(  \frac{\pi^{2}}{3M\tau} +n \tau \Big)d\tau.
	\end{align*}
	Then  we rewrite the above integral into the following form
	\begin{align*}
		&\quad\frac{1}{2\pi i} \int_{\frac{1}{X}-i\frac{1}{X}}^{\frac{1}{X}+i\frac{1}{X}} \tau^{4} \mathrm{exp}\Big(  \frac{\pi^{2}}{3M\tau} +n \tau \Big)d\tau\\[3pt]
		&=\frac{1}{2\pi i}\Big(\int_{C}- \int_{-\infty-i\frac{1}{X}}^{\frac{1}{X}-i\frac{1}{X}}-\int_{\frac{1}{X}+i\frac{1}{X}}^{-\infty+i\frac{1}{X}}\Big)  \tau^{4} \mathrm{exp}\Big(  \frac{\pi^{2}}{3M\tau} +n \tau \Big)d\tau\\[3pt]
		&=:g_{11}(n)+g_{12}(n)+g_{13}(n),
	\end{align*}
	where the Hankel contour $ C $ is given as
	\begin{align*}
		C:\quad 	\left(-\infty-i X^{-1}\right) \rightarrow\left(X^{-1}-i X^{-1}\right) \rightarrow\left(X^{-1}+i X^{-1}\right) \rightarrow\left(-\infty+i X^{-1}\right).
	\end{align*}
	By the definition of Bessel function \eqref{Bessel}, we have
	\begin{align}\label{g11}
		g_{11}(n)&=\Big({\frac{\pi^{2}}{3Mn}}\Big)^{ \frac{5}{2}}\frac{1}{2\pi i}\int_{C^{\ast}} s^{4} \mathrm{exp}\Big(  \sqrt{\frac{\pi^{2}n}{3M}}\big(s+\frac{1}{s}\big) \Big)ds\notag\\[3pt]
		&=\Big({\frac{\pi^{2}}{3Mn}}\Big)^{\frac{5}{2}}I_{-5}\Big(2\pi\sqrt{\frac{n}{3M}}\Big),
	\end{align}
	which contributes to the  main term  for $g_{a,M}(n)$.
	
	For $ g_{12}(n) $, by letting $ \tau=s-iX^{-1} $ and noting that \eqref{re-tau}, we obtain
	\begin{align}\label{a/tau}
		\left|e^{\frac{a}{\tau}} \right| \leq e^{\frac{aX}{2}}, \quad\left|e^{n \tau}\right|=e^{n s},
	\end{align}
	and
	\begin{align*}
		|\tau^4|=s^{4}+X^{-4}-6s^{2}X^{-2}\leq s^{4}+X^{-4} .
	\end{align*}
	Then it can be seen that
	\begin{align}\label{g12}
		|g_{12}(n)| & \leq \frac{1}{2\pi} \int_{-\infty}^{\frac{1}{X} } |\tau^{4}| \Big|\mathrm{exp}\Big(  \frac{\pi^{2}}{3M\tau} +n \tau \Big)\Big| d\tau  \notag\\
		&\leq \frac{1}{2\pi}   e^{\frac{\pi^{2}}{3M} \frac{X}{2}} \int_{-\infty}^{\frac{1}{X}} e^{ns}\left(s^{4}+X^{-4}\right) ds\notag\\[3pt]
		&\leq\frac{1}{2\pi}e^{\frac{\pi^{2}}{6M} X}  \Big(\int_{-\infty}^{0} e^{ns} s^{4} ds
		+		\int_{-\frac{1}{X}}^{\frac{1}{X}} e^{ns} X^{-4} ds\Big)\notag\\[3pt]
		&\leq\frac{1}{ 2 \pi}e^{\frac{\pi^{2}}{6M} X} \Big( \frac{\Gamma(5)}{n^{5}}+\frac{e^{\frac{n}{X}}}{nX^{4}}\Big)\notag\\[3pt]
		&=\frac{\Gamma(5)}{n^{5}\pi} \mathrm{exp}\Big(\frac{\pi}{2} \sqrt{\frac{n}{3M}}\Big)+\frac{\pi^{3}}{9M^{2}n^{3}}\mathrm{exp}\Big(\frac{3\pi}{2} \sqrt{\frac{n}{3M}}\Big)=:g_{12}^{u}(n).
	\end{align}
	Similarly, we obtain
	\begin{align}\label{g13}
		\quad|g_{13}(n)|\leq\frac{ \Gamma(5)}{n^{5}\pi} \mathrm{exp}\Big(\frac{\pi}{2} \sqrt{\frac{n}{3M}}\Big)+\frac{\pi^{3}}{9M^{2}n^{3}}\mathrm{exp}\Big(\frac{3\pi}{2} \sqrt{\frac{n}{3M}}\Big)=:g_{13}^{u}(n).
	\end{align}
	
	Next, according to the three cases for the parities of $a$ and $M$ as given in Theorem \ref{approximation-near-pole}, we will give the  estimation for $ g_{2}(n) $.
	For (Case 1), that is, $ a \equiv1 \ (\bmod\ 2) $ and $ M \equiv0  \ (\bmod\ 2)$,   by setting  $ t\to t-\frac{1}{2} $ and using Theorem \ref{approximation-near-pole}, it implies that
	\begin{align*}
		g_{2}(n)&=(-1)^{n}e^{\frac{n}{X}} \int_{-\frac{1}{2 \pi X}}^{ -\frac{1}{2 \pi X}} G\Big(-e^{-\left(X^{-1}+2 \pi i t\right)}\Big) e^{2 \pi i n t} d t\\[3pt]
		&=\frac{(-1)^{n}}{2\pi i} \int_{\frac{1}{X}-i\frac{1}{X}}^{\frac{1}{X}+i\frac{1}{X}} e^{n\tau} G(e^{-\tau})d\tau\\[3pt]
		&=\frac{\exp(E_{1,-})}{16}\frac{(-1)^{n}}{2\pi i} \int_{\frac{1}{X}-i\frac{1}{X}}^{\frac{1}{X}+i\frac{1}{X}}   \mathrm{exp}\Big(  \frac{\pi^{2}}{6M\tau} +n \tau \Big)d\tau.
	\end{align*}
	Then it can be divided as the way dealing with $ g_{1}(n) $
	\begin{align*}
		&\quad	\frac{1}{2\pi i} \int_{\frac{1}{X}-i\frac{1}{X}}^{\frac{1}{X}+i\frac{1}{X}}   \mathrm{exp}\Big(  \frac{\pi^{2}}{6M\tau} +n \tau \Big)d\tau\\[3pt]
		&=\frac{1}{2\pi i}\Big(\int_{C}- \int_{-\infty-i\frac{1}{X}}^{\frac{1}{X}-i\frac{1}{X}}-\int_{\frac{1}{X}+i\frac{1}{X}}^{-\infty+i\frac{1}{X}}\Big)  \mathrm{exp}\Big(  \frac{\pi^{2}}{6M\tau} +n \tau \Big)d\tau\\[3pt]
		&=:g_{21}(n)+g_{22}(n)+g_{23}(n).
	\end{align*}
	It directly follows that
	\begin{align}\label{g21}
		g_{21}(n)=\Big({\frac{\pi^{2}}{6Mn}}\Big)^{\frac{1}{2}}I_{-1}\Big(\sqrt{2}\pi\sqrt{\frac{n}{3M}}\Big) .
	\end{align}
	For $ g_{2 2}(n) $, by \eqref{a/tau}, let $ \tau=x-iX^{-1} $ and we have
	\begin{align}\label{g22}
		&\quad |	g_{22}(n)| =\Big|\frac{1}{2\pi i} \int_{\infty-i\frac{1}{X}}^{\frac{1}{X}-i\frac{1}{X}} \mathrm{exp}\Big(  \frac{\pi^{2}}{6M\tau} +n \tau \Big)d\tau\Big|\notag\\[3pt]
		&\leq \frac{1}{2\pi}e^{\frac{\pi^{2} X}{12M}} \int_{-\infty}^{\frac{1}{X}} e^{nx}dx= \frac{1}{2\pi}e^{\frac{\pi^{2} X}{12M}}\cdot \frac{e^{\frac{n}{X}}}{n}\notag\\[3pt]
		&=\frac{1}{2\pi n}\mathrm{exp}\Big( \frac{5\pi}{4}\sqrt{\frac{n}{3M}}\Big) =:g_{22}^{u}(n).
	\end{align}
	Simlilarly, we also have
	\begin{align}\label{g23}
		|	g_{2 3}(n)|\leq\frac{1}{2\pi n}\mathrm{exp}\Big( \frac{5\pi}{4}\sqrt{\frac{n}{3M}}\Big)=:g_{23}^{u}(n).
	\end{align}
	
	From Theorem \ref{Phi-aM} and Theorem \ref{lem-Gq-bound},  we can see that $I_3$ is covered by $\{\frac{h}{k_1}-|Y| \}$ where  $\frac{1}{\sqrt{2\pi X}}\leq \frac{h}{k_1}\leq 1$ and $\frac{1}{2\pi X}\leq |Y|\leq \frac{1}{k_1N}\leq \frac{1}{\sqrt{2\pi X}}$. Then for $ g_{3}(n) $, by applying Theorem \ref{lem-Gq-bound}, we have
	\begin{align}\label{g3}
		&\quad|	g_{ {3}}(n)|=\Big|e^{\frac{n}{X}} \int_{I_{3}} G\left(e^{-\left(X^{-1}+2 \pi i t\right)}\right) e^{2 \pi i n t} d t \Big|\notag\\[3pt]
		&\leq e^{\frac{n}{X}} \int_{I_{3}} e^{ \frac{\pi^{2}X}{2\sqrt{3}M}}d t\leq e^{\frac{n}{X}} \cdot e^{ \frac{\pi^{2}X}{2\sqrt{3}M}}\notag\\[3pt]
		&=\mathrm{exp}\Big(\frac{\sqrt{3}+2}{2}\pi\sqrt{\frac{n}{3M}}\Big)=:	g_{31}^{u}(n).
	\end{align}
	
	Above all, when $ a \equiv1 \ (\bmod \ 2) $ and $ M \equiv0 \  (\bmod\  2) $ ((Case 1) as given in Theorem \ref{approximation-near-pole}), we obtain  the main term by substituting \eqref{g11} into \eqref{gn}. In this case, we can see that   \eqref{g12}--\eqref{g3} are all error terms, thus we obtain the asymptotic estimation for $ g_{a,M}(n) $ as given in Theorem \ref{approxiamtion-gn}.
	
	For the other two cases of $a$ and $M$ as  discussed  in Theorem \ref{approximation-near-pole}, the only difference is that the error terms are from $g_2(n)$. By using the second case given by \eqref{Ge-tau}, we obtain that in these two cases
	\begin{align*}
		&g_{21}(n)=\Big({\frac{\pi^{2}}{4Mn}}\Big)^{\frac{1}{2}}I_{-1}\Big(\sqrt{3}\pi\sqrt{\frac{n}{3M}}\Big),\\
		&	\text{max}\{|	g_{2 2}(n)| ,|g_{2 3}(n)|\}\leq\frac{1}{2\pi n}\mathrm{exp}\Big( \frac{11\pi}{8}\sqrt{\frac{n}{3M}}\Big).
	\end{align*}
	This completes the proof.
	\qed

	\subsection{Merca's stronger conjecture for sufficiently large $n$}
	
	Note that from Corollary \ref{cor-r=2},  Merca's stronger conjecture \ref{stronger-ja-1} has been certified for the cases $R=2S$. Moreover, the authors \cite{DingLisa} have proved the cases when $ R= 3S $. Thus now it is sufficient for us to consider the cases when $ R\geq 4 $, and then $ M\geq 4 $ correspondingly. Firstly, let us deduce the bound for $ g_{a,M}(n)  $ as given in Theorem \ref{bound-gast} when $ n\geq(4.63M)^{9} $.
	
	{\noindent \it Proof of Theorem \ref{bound-gast}}.
	From \eqref{Bessel}, denote
	\begin{align*}
		f_{v}:=\frac{(v^{2}-\frac{1}{4})}{t}.
	\end{align*}
	Thus for real number number $  t$, we have
	\begin{align}
		&I_{v}(t)\leq \frac{e^{t}}{\sqrt{2\pi t}}\left(1+\pi f_{v} e^{f_{v}}\right)+ \frac{e^{-t}}{\sqrt{2\pi t}}\left(1+f_{v}e^{f_{v}}\right) ,\label{Ivtu}\\
		&I_{v}(t)\geq \frac{e^{t}}{\sqrt{2\pi t}}\left(1-\pi f_{v} e^{f_{v}}\right)- \frac{e^{-t}}{\sqrt{2\pi t}}\left(1+f_{v}e^{f_{v}}\right) .\label{Ivtd}
	\end{align}
	With $n\geq(4.63M)^{9}$, it can be seen that
	\begin{align}\label{f-5}
		f_{-5}:=\frac{(v^{2}-\frac{1}{4})}{n}
		\leq0.000001,
	\end{align}
	and
	\begin{align}\label{e-n}
		\frac{e^{-n}}{\sqrt{2\pi n}} (1+f_{-5}e^{f_{-5}}) \leq 	\frac{2e^{-n}}{\sqrt{2\pi n}} \leq 0.000001	\frac{e^{n}}{\sqrt{2\pi n}} .
	\end{align}
	Combining \eqref{Ivtu}--\eqref{e-n}, it follows that
	\begin{align}
		&I_{-5} (n)\leq \frac{e^{n}}{\sqrt{2\pi n}}\left(1+f_{5}\pi e^{f_{5}}+0.000001\right)<\frac{1.0001e^{n}}{\sqrt{2\pi n}} =:g_{11}^{u}(n),\label{Ivtu-1}\\
		&I_{-5} (n)\geq \frac{e^{t}}{\sqrt{2\pi n}}\left(1-f_{5}\pi e^{f_{5}}-0.000001\right)>\frac{0.9999e^{n}}{\sqrt{2\pi n}} =:g_{11}^{d}(n).\label{Ivtd-1}
	\end{align}
	
	From Theorem \ref{approximation-near-pole}, we have that
	\begin{align*}
		7.02 M^{\frac{3}{4}}X^{-\frac{3}{4}}\leq0.001,
	\end{align*}
	thus by Taylor's expansion,
	\[0.999\leq 1-|E_{+}|\leq e^{ E_{+} }  \leq 1+2|E_{+}|\leq 1.002.\]
	
	By using  \eqref{g11}, \eqref{g12}, \eqref{g21},  \eqref{g22}, \eqref{g3}, \eqref{Ivtu-1} and \eqref{Ivtd-1}, it leads to that
	\begin{align*}
		&g_{a,M}(n) \leq(1.002-1)a_{11}g_{11}^{u}(n)+ \sum _{i,j}  a_{ij}g_{i j}^{u}(n)  ,\label{g-u-bound}\\
		&g_{a,M}(n) \geq  0.999 a_{11}g_{11}^{d}(n)+ a_{11}g_{11}^{u}(n)- \sum _{i,j} a_{ij}g_{i j}^{u}(n)   ,
	\end{align*}
	where the coefficients $ a_{ij} $ can be found in the process to evaluate $ g_{a,M}(n) $  in \eqref{gn} with the results of  Theorem \ref{approximation-near-pole}.
	By the definition of $ g_{12}^{u} $, $g_{22}^{u}$ and $ g_{31}^{u} $, when $ 	n\geq (4.63 M)^{9}\geq (4.63\times 4)^9  $, it implies that
	\[  a_{22}g_{22}^{u}(n)<a_{12}g_{12}^{u}(n)<a_{31}g_{31}^{u}(n) \leq \frac{1}{2\Delta_{M}\sin \frac{a\pi}{M}}  \Big({\frac{\pi^{2}}{3Mn}}\Big)^{\frac{5}{2}}\frac{0.000001 e^{n_{0}}}{\sqrt{2\pi n_{0}}},\]
	where $ n_{0}=2\pi\sqrt{\frac{n}{3M}} $.
	We can therefore deduce that
	\begin{align*}
		\Big({\frac{\pi^{2}}{3Mn}}\Big)^{\frac{5}{2}}\frac{0.99}{2\Delta_{M}\sin \frac{a\pi}{M}}\frac{e^{ n_{0}}}{\sqrt{2\pi  n_{0}}}\leq g_{a,M}(n)\leq\Big({\frac{\pi^{2}}{3Mn}}\Big)^{\frac{5}{2}} \frac{1.01}{2\Delta_{M}\sin \frac{a\pi}{M}}\frac{e^{ n_{0}}}{\sqrt{2\pi  n_{0}}} .
	\end{align*}
	Finally, the proof is complete by replacing $ a,M $ by $ s,r $, respectively.
	\qed
	
	Now, we are ready to prove the stronger version of Merca's conjecture on the truncated Jacobi triple theta series \eqref{TJTTS_Merca} for sufficiently large $n$.
	
	\noindent{\it Proof of Theorem  \ref{thm-large-n}.}  Firstly, we consider the case when  $ (s,r-s,r+s,2r-s)=1 $. From \eqref{p-condition}, let
	\begin{align}\label{p-bound-non-k}
		p\geq 2r^{2}s^{1/3},
	\end{align}
	and
	\begin{align}\label{def-F1}
		F_{1}:=(2r^{2}s^{1/3}+k) (2\cdot 2r^{2}s^{1/3}\cdot r-r+2 s).
	\end{align}
	Redefine the right hand side of \eqref{main-transform} as follows,
	\begin{align*}
		&\sum_{n=0}^{\infty} b(n)q^{n}:=\quad\frac{\sum_{j=0}^{\infty}\left(q^{t_{1,j}}-q^{t_{2,j}}-q^{t_{3,j}}+q^{t_{4,j}}\right)}{(q^s,q^{r-s}; q^r)_{2}}.
	\end{align*}
	Then from the results in Section \ref{coprime}, we see that
	\begin{align}\label{bn}
		b(n)\geq  b^{\ast} (n),
	\end{align}
	where $ b^{\ast}(n) $ is the combination of $   {D_{i,d}}(n,p )/(6\Delta) $ for $i=1,2,3,4 $,
	i.e., for any nonnegative integer $ p $,
	\begin{align*}
		b^{\ast}(n):=\left\{
		\begin{array}{ll}
			\dps {D_{1,d}}(n,p )/(6\Delta), & t_{4,p-1}\leq n<t_{1,p} ,\\
			\dps{D_{2,d}}(n,p )/(6\Delta), & t_{1,p}\leq n<t_{2,p} ,\\
			\dps{D_{3,d}}(n,p )/(6\Delta), &  t_{2,p}\leq n<t_{3,p} ,\\
			\dps{D_{4,d}}(n,p )/(6\Delta), &  t_{3,p}\leq n<t_{4,p}.
		\end{array}
		\right.
	\end{align*}
	Applying \eqref{bn} and noting the fact that $ g_{s,r}(n) \geq 0 $ as given in \eqref{def-Gq}, then
	\begin{align}\label{Hn}		\sum_{n=0}^{\infty}H(n)q^{n}&:=\frac{\sum_{j=0}^{\infty}\left(q^{t_{1,j}}-q^{t_{2,j}}-q^{t_{3,j}}+q^{t_{4,j}}\right)}{(q^{ s},q^{ r-s};q^r)_{\infty}}\notag\\[3pt]
		&=\frac{1}{(q^{2r+s},q^{3r-s};q^r)_{\infty}} \frac{\sum_{j=0}^{\infty}\left(q^{t_{1,j}}-q^{t_{2,j}}-q^{t_{3,j}}+q^{t_{4,j}}\right)}{(q^s,q^{r-s};q^r)_{2}  }\notag\\[3pt]
		&=\sum_{n=0}^{\infty} g_{s,r}(n)q^{n}\times\Big(\sum_{n=0}^{F_{1}-1} b (n)q^{n}+\sum_{n=F_{1}}^{\infty} b (n)q^{n}\Big)\notag\\
		&=\sum_{n=0}^{\infty}\sum_{m=0}^{n} b  (m) g_{s,r}(n-m) q^{n}.
	\end{align}
	We have
	\begin{align*}
		H(n)\geq H_{1}(n),
	\end{align*}
	where
	\begin{align*}
		H_{1}(n) = \sum_{m=0}^{F_{1}-1} b^{\ast} (m) g_{s,r}(n-m)+\sum_{m=F_{1}}^{n} b^{\ast}(m) g_{s,r}(n-m).
	\end{align*}
	
	From the analysis as given in Section \ref{coprime}, it can be seen that  $  b^{\ast}(n) $ are  monotonically increasing for $ n\geq F_{1} $. So that we can use a linear  function to bound $ b^{\ast}(n) $, more specifically, we assume that
	\begin{align*}
		b^{\ast} (n)\geq a_{1} n+a_{0},\quad \text{ for } n\geq F_{1} ,
	\end{align*}
	where $a_{1} $ and $a_{0}  $ are constants.
	With $p\geq 2r^{2}s^{1/3}$, \eqref{Derivation-0}, \eqref{Derivation-1}--\eqref{Derivation-3-2}, we can  bound $ a_{1} $ as follows
	\begin{align}\label{a1-bound}
		\text{min}\{D^{\prime}_{i,d}\}_{1\leq i\leq 4} \geq p^2 \left(12 k r s-12 r s+12 s^2\right)/(6\Delta)=: a_{1}.
	\end{align}
	Since \eqref{p-bound} shows that $ D_{0,d}(t_{4,p-1},p)>0 $, we let  $a_{0}= - a_{1}F_{1} $. Then
	\begin{align}\label{b2-bound}
		b^{\ast} (n)\geq a_{1 } n-a_{1 }F_{1}.
	\end{align}
	
	For $ 0\leq n\leq F_{1}-1 $, by the monotonicity of each case in Section \ref{coprime} we have
	\begin{align}\label{b1-bound}
		&\quad|b^{\ast}(n)|\leq -\sum_{j=0}^{p -1}\left( C^{d}-  C^{u}\right)=p J\notag\\
		&=\frac{p}{6\Delta}(48 r^8 s^2 - 180 r^6 s^4 + 12 s^6 - 60 r^2 s^8 - 12 s^{10} +
		r^5 (-24 s + 60 s^5)\notag\\
		& \quad+ r^4 (-12 s^2 + 204 s^6) +
		r^3 (60 s^3 - 120 s^7) + r (-36 s^5 + 60 s^9))\notag\\
		&<48 pr^8 s^2/(6\Delta)=:b_{1}.
	\end{align}

	When $ n\geq\text{max}\{(4.63r)^{9},F_{1}\} $, applying Theorem \ref{bound-gast}, \eqref{b2-bound} and \eqref{b1-bound}, then
	\begin{align*}
		H_{1}(n)&= \sum_{m=0}^{F_{1}-1} b^{\ast} (m)g_{s,r}(n-m)+\sum_{m=F_{1}}^{n} b^{\ast}(m)g_{s,r}(n-m)\\
		&\geq \sum_{m=0}^{F_{1}-1} b ^{\ast
		}(m)g_{s,r}(n-m)+\sum_{m=F_{1}}^{n} \left(a_{1 } m+a_{0 }\right)g_{s,r}(n-m)\\
		&>-F_{1}b_{1}g^{u}(n)+\sum_{m=0}^{n-F_{1}} \left(a_{1 } (n-m)+a_{0}\right)g^{d}(m).
	\end{align*}
	By using the variable substitution
	\begin{align}\label{def-n1}
		n=n_{1}+F_{1} :=k_{2}n_{1},
	\end{align}
	and noting the fact
	\begin{align*}
		\int x^n e^{a x} d x=\frac{1}{a} x^n e^{a x}-\frac{n}{a} \int x^{n-1} e^{a x} d x,
	\end{align*}
	it leads to that   for
	\begin{align}\label{n1-bound-1}
		n_{1} \geq  2\cdot (4.63r)^{9},
	\end{align}
	the following relations hold
	\begin{align*}
		&\quad-F_{1}b_{1}g^{u}(n)+\sum_{m=0}^{n-F_{1}} \left(a_{1 } (n-m)+a_{0}\right)g^{d}(m)\\[3pt]
		&>\int_{m=\frac{n_{1}}{2}}^{n_{1}} \left(a_{1 } ( n_{1}-m)\right)g^{d}(m)dm-F_{1}b_{1}g^{u}(k_{2}n_{1})\\[3pt]
		&>a_{1}n_{1}\int_{m=\frac{n_{1}}{2}}^{n_{1}} g^{d}(m)dm-a_{1 }\int_{m=\frac{n_{1}}{2}}^{n_{1}} mg^{d}(m)dm-F_{1}b_{1}g^{u}(k_{2}n_{1})\\[3pt]
		&>{\frac{0.99\pi^{4}a_{1} }{2(3r)^{\frac{11}{4}}\Delta\sin ({s\pi}/{r})\times 2 }}   \bigg(\frac{\sqrt{3r}}{2\pi}n_{1}^{-\frac{5}{4}} +
		\frac{9(\sqrt{3r})^{2}}{2(2\pi)^{2}} n_{1}^{-\frac{7}{4}}  \Big. \\
		&\quad \Big.- \frac{\sqrt{3r}}{2\pi}n_{1}^{-\frac{5}{4}} -
		\frac{5(\sqrt{3r})^{2}}{2(2\pi)^{2}} n_{1}^{-\frac{7}{4}}    \bigg)\mathrm{exp} \Big(2\pi\sqrt{\frac{ n_{1}}{3r}}\Big) -1.01F_{1}b_{1}g^{u}(n)\\
		&>{\frac{\pi^{4} n_{1}^{-\frac{11}{4}}}{2(3r)^{\frac{11}{4}}\Delta\sin (s\pi /r)} }\bigg( 0.99  \frac{3ra_{1}}{2\pi^{2}}n_{1}\mathrm{exp} \Big(2\pi\sqrt{\frac{ n_{1}}{3r}}\Big)
		-1.01\frac{F_{1}b_{1}}{k_{2}^{\frac{11}{4}} }\mathrm{exp} \Big(2\pi\sqrt{\frac{k_{2} n_{1} }{3r} }\Big)   \bigg)\\
		&=:H_{2}(n).
	\end{align*}
	When $n_{1} \geq 2\cdot (4.63r)^{9}$, the relation $e^{n_1/2}<2e^{n_1} $ deduces the penultimate step.
	
	Since $ k_{2}>1 $, to ensure $ H_{2}(n)\geq0 $, it's sufficient to show that
	\begin{align}\label{inequality}
		\frac{ {3r}a_{1 } }{2\pi^{2} F_{1}b_{1}}n_{1}\geq \frac{1.01}{0.99} \mathrm{exp}\left(\frac{2\pi}{\sqrt{3r}} \sqrt{n_{1}} (\sqrt{k_{2}}-1)\right).
	\end{align}
	If
	\begin{align}\label{n1-bound-F}
		n_{1}\geq 4 F^{2}_{1},
	\end{align}
	thus by \eqref{def-n1},
	\begin{align*}
		\sqrt{n_{1}} (\sqrt{k_{2}}-1)<\frac{1}{4}.
	\end{align*}
	By noting that $r\geq 4$, the  right hand side of \eqref{inequality} is less than a constant, which is
	\begin{align}\label{exp2pi}
		\frac{1.01}{0.99}\mathrm{exp}\Big(\frac{2\pi}{\sqrt{3r}} \sqrt{n_{1}} (\sqrt{k_{2}}-1)\Big)<1.021\mathrm{exp}\Big(\frac{ \pi}{4\sqrt{3}}  \Big).
	\end{align}
	
	Moreover, by substituting \eqref{p-bound-non-k}, \eqref{def-F1}, \eqref{a1-bound}, \eqref{b1-bound} and \eqref{n1-bound-F} into the left hand side of \eqref{inequality}, it can be seen that \begin{align}\label{F-constant}
		\frac{ {3r}a_{1 } }{2\pi^{2} F_{1}b_1}n_{1}&\geq \frac{3r}{2\pi^{2}}\times\frac{ p^2 \left(12 k r s-12 r s+12 s^2\right) }{48 p r^8 s^2 }\cdot4F_{1}\notag\\
		&  >\frac{3r }{2\pi^{2}}\times \frac{12p s^{2} }{48   r^{8}s^{2}}\times\frac{31}{4} \big(2r^{2}s^{\frac{1}{3}}\big)^{2}r\geq \frac{93s}{4\pi^{2}} >1.021\mathrm{exp} \Big(\frac{ \pi}{ 4\sqrt{3 }}  \Big).
	\end{align}
	Thus \eqref{inequality} holds.

	By combining \eqref{n1-bound-1} and \eqref{n1-bound-F}, we can conclude that  when $ (s,r-s,r+s,2r-s)=1 $, the coefficient of $q^n$ in \eqref{TJTTS_Merca} is nonnegative for
	\begin{align}\label{n-equality}
		n&\geq \text{max}\left\{ 4F^{2}_{1}  ,2\cdot (4.63r)^{9} \right\}+F_{1}+rk(k+1)/2-sk=: N_1 (r,s,k)
	\end{align}

	If $ (s,r-s,r+s,2r-s)\neq1 $, following the similar procedures, we can also obtain such a constant $N (r,s,k)$ such that the coefficient $H (n)$ in \eqref{Hn} is nonnegative when $n\geq N (r,s,k)$ and we will give a brief proof. When $ 2\mid  s$ then $2\nmid r $, $r\geq 5$ and $r\geq 2s+1$, we  first define
	\begin{align*}
		\sum_{n=0}^{\infty}J_{a,M}(n)q^n =\frac{1}{(q^{2M+a },q^{2M-a};q^{M})_{\infty}} .
	\end{align*}
	Then by \eqref{main-transform-3} and \eqref{Hn}, we can divide $H(n)$ as follows
	\begin{align}\label{JD}	\sum_{n=0}^{\infty}H(n)q^{n}&=\frac{\sum_{j=0}^{\infty}\left(q^{t_{1,j}}-q^{t_{2,j}}-q^{t_{3,j}}+q^{t_{4,j}}\right)}{(q^{ s},q^{ r-s};q^r)_{\infty}}\notag\\[3pt]
		&=\sum_{n=0}^{\infty}J_{s,r}(n)q^{n}\sum_{n=0}^{\infty}	D(n)q^{n}.
	\end{align}
	
	We shall next give the corresponding $p,F_{1},a_1$ and $b_{1}$ in the case $ 2\mid  s$.  With the six expressions of $ D_{0,d}(t_{1,p},p)$, $D_{1,d}(t_{1,p},p)$, $D_{1,d}(-r + p r + 2 k p r + 2 p^2 r - s/2,p)$, $D_{2,d}(t_{2,p},p)$, $D_{3,d}(t_{3,p},p) $, $D_{3,d}(t_{4,p},p)$ in the appendix, we find a bound for $p$ by letting
	$$ p^2 (r (4 k s-4 s)+8 s^2)\geq p\cdot 8r^3s^2$$
	and   the remaining terms are positive in the six expressions. Then we have
	\begin{equation}\label{thm17-p}
		p\geq {2r^3s}/({r(k-1)+2s}).
	\end{equation}
	Then for \begin{align*}
		n\geq ({2r^3s}/({r(k-1)+k})+r)(2\cdot {2r^3s}/({r(k-1)+2s}) \cdot r-r+2s),
	\end{align*} Merca's conjecture holds in this case and Corollary \ref{cor-3} holds immediately by further letting
	\begin{align*}
		2\cdot{2r^3s}/({r(k-1)+2s})\leq( r-2s).
	\end{align*}
	We next use a stronger bound for $p$ that
	\begin{align}\label{psbound}
		p\geq r^{4} .
	\end{align}
	With $2\Delta_{1}D_{0,d}(n,p):=2\Delta_{1}D_{0}(n,p)$, $2\Delta_{1}D_{1,d}(n,p) $, $2\Delta_{1}D_{2,d} (n,p)$, $2\Delta_{1}D_{3,d} (n,p)$ in the appendix, we have
	\begin{align*}
		D(n)\geq D_{i,d}(n,p)/2\Delta_1\geq  a_{2 } n-a_{2 }F_{2},\text{ for } 0\leq i\leq 3,
	\end{align*}
	where $a_2=4ps /(2r\Delta_1) $, $F_{2}:=(r^{4}+k) (2\cdot r^{4}\cdot r-r+2 s)$ and one can verify the condition by Mathmatica.
	
	Similar to \eqref{b1-bound}, in this case, the corresponding bound for $ b $ can be expressed as follows
	\begin{align*}
		D(n)\leq 2p(C_3^{u}-C_3^{d})\leq \frac{8pr^3s^2}{2\Delta_1}=:b_2.
	\end{align*}
	By the similar discussion with Section \ref{Nonmodular}, we omit the details and the bounds for $J_{s,r}(n)$ can be given by
	\begin{align*}
		0.99 J(n)\leq 	J_{s,r}(n)\leq 1.01 J(n),
	\end{align*}
	where
	\begin{align*}
		J(n):=\frac{1}{2\Delta_1\sin (s\pi /r)}\Big(\frac{\pi^2}{3rn}\Big)^2\mathrm{exp} \Big(2\pi\sqrt{\frac{n}{3r}}\Big).
	\end{align*}
	Finally, following the similar  steps  as dealing with the convolution of   $C(n)$ and $g_{s,r}(n)$,  and from \eqref{JD}, we obtain that
	\begin{align*}
		H(n)>- 1.01F_{2}b_{2}J(n)+0.99\sum_{m=0}^{n-F_{2}} \left(a_{1 } (n-m)-a_{1}F_{1}\right)J(m):=H^{(2)}_1(n).
	\end{align*}
	Similarly with \eqref{inequality}, we find that when $ n\geq N_2(r,s,k) $, $H^{(2)}_1(n)>0$  where
	\[  N_2(r,s,k):=\text{max}\left\{ 4F^{2}_{2},2\cdot (4.63r)^{9} \right\}+F_{2}+rk(k+1)/2-sk.\]
	
	We omit the details of the case of $ 2\nmid s $ and the corresponding parameters are $p\geq 64r^4/15$,  $F_{3}:=(64r^4/15+k) (2\cdot64r^4/15\cdot r-r+2 s)$, $a_3=2ps/(2r\Delta_2)$, $b_3=16pr^3s^2/(2\Delta_2)$, and thereby
	\begin{align*}
		N_{3}(r,s,k)=\text{max}\left\{ 4F^{2}_{3}  ,2\cdot (4.63r)^{9} \right\}+F_{3}+rk(k+1)/2-sk,
	\end{align*}
	such that  when $ n\geq N_3(r,s,k)$, the coefficient of $q^n$ in \eqref{TJTTS_Merca} is nonnegative. Above all, the proof is complete.
	\qed
	
	In summary, we state the systematic method to prove Merca's conjecture \ref{stronger-ja-2} by determining the constant $N(r,s,k)$ for given $r, s$ and $k$ such that when $n\geq N(r,s,k)$ the conjecture holds.
	
	{\noindent} \rule{14.5cm}{0.08em}
	\centerline{Algorithm for the proof of Merca's stronger conjecture \ref{stronger-ja-2}}
	
	{\noindent} \rule[6pt]{14.5cm}{0.08em}
	
	{\noindent}{\bf Case 1:} If $ (s,r-s,r+s, 2r-s)=1$, $N(r,s,k)$ can be determined as follows:
	
	{\bf Step 1.1:} Compute
	\[
	{L_1(r,s,k)=\left(2 r^2 \sqrt[3]{s/k}+k\right) \left(4 r^3 \sqrt[3]{s/k}-r+2 s\right)}
	\]
	as given in \eqref{def-F}. Using computer  programs, if we can show that for  $0\leq n \leq L_1(r,s,k)$ the coefficients of $q^n$ in the theta series \eqref{main-transform} are nonnegative, then Conjecture \ref{stronger-ja-2} holds directly as a corollary.  Alternatively, if $k$ is large enough and satisfies the condition in Corollary \ref{cor-main},  Merca's conjecture holds directly, too. Then return $N(r,s,k)=0$.  Otherwise, go to Step 1.2.
	
	{\bf Step 1.2:} By computing the convolution of the series for $C(n)$ and $g_{s,r}(n)$ as shown in \eqref{Hn}, return
	\[
	N(r,s,k)=N_{1}(r,s,k)= \text{max}\left\{ 4F^{2}_{1}  ,2\cdot (4.63r)^{9} \right\}+F_{1}+rk(k+1)/2-sk,
	\]
	where
	{	 $F_{1} $ is given by \eqref{def-F1}.
		
		{\noindent}{\bf Case 2:} If
		$ (s,r-s,r+s,2r-s)\neq 1 $ and $2\mid  s$,   the procedures are as follows:
		
		{\bf Step 2.1:} For given $r,s,k$, let $z_0$ be the maximum zero of  $ D_{0,d}(t_{1,p},p)$, $D_{1,d}(t_{1,p},p)$, $D_{1,d}(-r + p r + 2 k p r + 2 p^2 r - s/2,p)$, $D_{2,d}(t_{2,p},p)$, $D_{3,d}(t_{3,p},p) $, $D_{3,d}(t_{4,p},p)$, which are polynomials relative to $p$ of degree $2$. Compute
		\[
		{L_2(r,s,k)=(z_0+k) (2 z_0 r-r+2 s).}
		\]
		If we can show the theta series \eqref{main-transform-3} has nonnegative terms for $0\leq n \leq L_2(r,s,k)$, then Conjecture \ref{stronger-ja-2} holds directly as a corollary. Else if $k$ satisfies the condition in Corollary \ref{cor-3},  Merca's conjecture holds immediately for such $k$. Then return $N(r,s,k)=0$. Otherwise, go to Step 2.2.

		{\bf Step 2.2:} By studying the convolution of the series for $J_{s,r}(n)$ and $D(n)$ as given in \eqref{JD}, return
		\begin{align*}
			N(r,s,k)=N_{2}(r,s,k)=\text{max}\left\{ 4F^{2}_{2}  ,2\cdot (4.63r)^{9} \right\}+F_{2}+rk(k+1)/2-sk,
		\end{align*}
		where $F_{2}=(r^{4}+k) (2\cdot r^4\cdot r-r+2 s)$.
		
		{\noindent}{\bf Case 3:} If $ (s,r-s,r+s,2r-s)\neq 1 $, and $2\nmid  s$, the procedure is similar to Case 2 and we just need to consider  $E(n)$ and $J_{r-s,r}(n)$.
		
		{\bf Step 3.1:} For given $r,s,k$, let $z_0$ be the maximum zero of the polynomials  $ E_{0,d}(t_{1,p},p)$, $E_{1,d}(t_{1,p},p)$, $E_{1,d}(-r + p r + 2 k p r + 2 p^2 r - s/2,p)$, $E_{2,d}(t_{2,p},p)$, $E_{3,d}(t_{3,p},p) $, $E_{3,d}(t_{4,p},p)$ on $p$. Compute
		\[
		{L_3(r,s,k)=(z_0+k) (2 z_0 r-r+2 s).}
		\]
		Moreover, if we can show the theta series \eqref{main-transform-3} has nonnegative terms for $0\leq n \leq L_3(r,s,k)$ or the $k$ satisfies the condition in Corollary \ref{cor-3}, then Conjecture \ref{stronger-ja-2} holds directly as a corollary. Then return $N(r,s,k)=0$. Otherwise, go to Step 3.2.
		
		{\bf Step 3.2:} By studying the convolution of the series for $J_{r-s,r}(n)$ and $E(n)$ similar to \eqref{JD}, return
		\begin{align*}
			N(r,s,k)=N_{3}(r,s,k)=\text{max}\left\{ 4F^{2}_{3}  ,2\cdot (4.63r)^{9} \right\}+F_{3}+rk(k+1)/2-sk,
		\end{align*}
		where $F_{3}=(64r^4/15+k) (2\cdot64r^4/15\cdot r-r+2 s)$.
		
		{\noindent} \rule[6pt]{14.5cm}{0.08em}
		
		We also remark that for the bound $L_1(r,s,k)$ as given in Case 1, we can also improve it following the similar analysis  as discussed in Case 2 and Case 3, in which we need to consider the maximum zero for certain polynomials.
		
		Now, we give an example to illustrate the process to determine $N (r,s,k)$ when $ (s,r-s,r+s,2r-s)=1 $.
		
		\begin{ex}
			When $ r=12,s=1 ,k=1 $, since $ (s,r-s,r+s,2r-s)=1 $, we can follow the above discussion to derive $N(r,s,k) $.  By \eqref{main-transform},  we obtain that
			\begin{align*}		&\quad\frac{q^{rk(k+1)/2-sk}\sum_{j=0}^{\infty}\left(q^{t_{1,j}}-q^{t_{2,j}}-q^{t_{3,j}}+q^{t_{4,j}}\right)}{(1-q^{s})(1-q^{r-s})(1-q^{r+s})(1-q^{2r-s})}\\
				&=\frac{q^{11}}{(q,q^{11};q^{12})_{2}}  \sum_{j=0}^{\infty}(-1)^j  q^{  6 \left(j^2+2 j  +j\right)-j }\left(1-q^{2 j+3}\right):=q^{11}\sum_{n\geq0} b(n)q^{n}.
			\end{align*}
			By applying Mathematica, one can see that  $ b(49)=-1 $ while $ b(n)\geq 0 $ for $ n\neq 49 $. By \eqref{p-bound-non-k} and \eqref{def-F1}, we have $ p=2\times12^{3}=3456 $ and $ F_{1}= (p+k) (2 pr-r+2 s)=286702838$. Furthermore, since
			\[
			N_1(r,s,k)=\max \left\{ 4F^{2}_{1} ,2\times(4.63r)^{9}\right\}+F_1+rk(k+1)/2-sk \geq 3.29\times 10^{17},
			\]
			we claim that Merca's conjecture \eqref{conj-strong-1} is true for $ n\geq 3.29\times 10^{17}$.
		\end{ex}

		Next, we discuss some specific examples of the bounds for $ n $  with respect to $ r,s,k $. The bounds as given in Table 1 are the corresponding $ N_{1}(r,s,k) $ as defined in \eqref{def-F}, which are large enough to make $C(n)\geq 0$ when $n\geq L_1(r,s,k)$.
		\begin{table}[!ht]\label{Tab-1}
			\centering
			\caption{ $L_{1}(r,s,k)$ for $ (s,r-s,r+s,2r-s)=1 $}
			\begin{tabular}{|l|l|l|l|l|l|}
				\hline
				\diagbox{$(r,s)$}{$ k $} & $1 $ & $  10^{2}$ & $  10^{4} $ & $ 10^{6} $ & $ 10^{8} $ \\ \hline
				$ (4,1) $ & 8382   & 5682  &98839  &$ 5.6\times 10^5 $  & 0   \\ \hline
				$ (10,3) $ &  $ 1.67 \times 10^{6}$  &  $ 2 \times 10^{5}$   &  $ 2.65\times 10^6 $& $ 5.37\times 10^7 $ &  $ 8.43\times 10^8 $ \\ \hline
				$ (20,3) $ & $ 5.33 \times 10^7 $   &  $ 3.5 \times 10^{6}$   & $ 2.14\times 10^7 $ & $ 4.48\times 10^8 $ &  $ 8.54\times 10^9 $  \\ \hline
				$ (50,21) $ &  $ 1.90\times 10^{10} $  &  $ 9.13 \times 10^{8}$   & $ 7.7\times 10^8 $ &$ 1.38\times 10^{10} $  &  $ 3.33\times 10^{11} $  \\ \hline
				$ (100,19) $ &$ 2.26 \times 10^{11}$    &  $ 2.67 \times 10^{10}$   & $ 6.18\times 10^9 $ & $ 5.70\times 10^{11} $ &  $ 2.29\times 10^{12} $  \\ \hline
				$ (1000,333) $ &$5.97 \times 10^{15}$    &  $ 1.78 \times 10^{16}$   & $ 8.41\times 10^{14} $ & $ 1.07\times 10^{11} $ &  $ 3.16\times 10^{14} $  \\ \hline
			\end{tabular}
		\end{table}
		
		Note that when $k$ is large, one can obtain a sharper bound than $L_1(r,s,k)$ by applying the method of computing the roots used in obtaining $ L_2(r,s,k) $ or $ L_3(r,s,k) $. Moreover, with the help of computer, if we can show the coefficients of \eqref{main-transform}, \eqref{main-transform-3} and \eqref{En}  are nonnegative for more than  $ L_i(r,s,k) $  terms respectively, then Conjecture \ref{stronger-ja-2} holds directly as a corollary. Otherwise, we have to consult the cases as in the proof of Theorem \ref{thm-large-n}.
		Actually, it will take days for our personal computer to expand terms on the scale of $10^{4}\sim 10^{6}$.
		
		In Table 2 and 3, we list $ L_2(r,s,k) $ and $L_3(r,s,k) $ as given in \eqref{N2} and \eqref{z-0} which are the starting point of the coefficients of $ D(n) $ and $E(n)$ that are nonnegative,  respectively.
		\begin{table}[!ht]\label{Tab-2}
			\centering
			\caption{$  L_2(r,s,k)  $ for $ 2\mid s  $}
			\begin{tabular}{|l|l|l|l|l|l|}
				\hline
				\diagbox{$(r,s)$}{$ k $} & $1 $ & $  10 $ & $  10^{2} $ & $ 10^{4} $ & $ 10^{6} $ \\ \hline
				$ (5,2) $ &175910  & 12 &0 & 0  & 0   \\ \hline
				$ (9,2) $ & $2.22\times 10^{7}$  & 7769  &0  & 0  & 0   \\ \hline
				$ (11,4) $ & $4.90\times 10^{7}$ & $60430$   & 0& 0 & 0 \\
				\hline
				$ (21,4) $ & $8.89\times 10^{9}$ & $ 3.67\times 10^{6}$   & 0& 0 & 0 \\
				\hline
				$ (53,20) $ &  $ 2.68\times 10^{12}$  &  $ 4.34\times 10^{9}$ & $ 3.22\times 10^{7}$& 0 & 0\\ \hline
			\end{tabular}
		\end{table}
		\begin{table}[!ht]\label{Tab-3}
			\centering
			\caption{$ L_3(r,s,k)  $ for $ 2\nmid s  $}
			\begin{tabular}{|l|l|l|l|l|l|}
				\hline
				\diagbox{$(r,s)$}{$ k $} & $1 $ & $  10 $ & $  10^{2} $ & $ 10^{4} $ & $ 10^{6} $ \\ \hline
				$ (2,1) $ &216  & 0  &0 & 0  & 0   \\ \hline
				$ (3,1) $ & 11835   & 0  &0  & 0  & 0   \\ \hline
				$ (10,3) $ & $5.79\times 10^{7}$ & $ 48525$   & 0& 0 & 0 \\
				\hline
				$ (20,3) $ & $1.87\times 10^{10}$ & $ 4.86\times 10^{6}$   & 0& 0 & 0 \\
				\hline
				$ (50,21) $ &  $ 1.77\times 10^{12}$  &  $ 3.51\times 10^{9}$ &  $ 2.60\times 10^{7}$& 0 & 0\\ \hline
			\end{tabular}
		\end{table}

		Remark that the first two lines of Table 3 are just equivalent to the special cases of  $R=2S$ and $R=3S$ \cite{DingLisa} of Conjecture \ref{conj-strong-1}, which can be fully confirmed with the help of computer programs, respectively.
		
		\section{Concluding remarks}\label{conclusion}

		For the partition functions  as given in Theorem \ref{main-thm}, we can further consider the following more general form
		\begin{align}\label{gen}
			(-1)^{k} \frac{\sum_{j=k}^{\infty}(-1)^j q^{r j(j+1) / 2}\left(q^{-sj}-q^{( j+1) s}\right)}{\left(q^s;q^r)_{\ell_1} (q^{r-s}; q^r\right)_{\ell_2} },
		\end{align}
		where $r$ and $s$ are coprime positive integers such that $1\leq s< r/2$, $k \geq 1,\ \ell_{1}\geq 0,\ \ell_{2}\geq 0 $ and denote $\ell=\ell_1+\ell_2$. Then by dividing these  $\ell$ integers $ \{ir+s\}_{0\leq i< \ell_{1} } $ and $ \{(j+1)r-s\}_{0\leq j< \ell_{2}} $  into  groups of coprime numbers and following the similar discussions based on Lemma \ref{lem-coprime} and the circle method, one can try to show the nonnegativity of the coefficient of $q^n$.
		
		Moreover, employing our method, one may study the following truncated quintuple product identity, which was also investigate by Chan, Ho and Mao in \cite{Maoquint}:
		\begin{align*}
			\frac{\sum_{j=-k}^{k}(-1)^j q^{R j(3j+1) }\left(q^{3jS}-q^{-(3 j+1) S}\right)}{\left(q^{-S}, q^{R+S}, q^R ; q^R\right)_{\infty} \left(q^{R-2S}, q^{R+2S}; q^{2R}\right)_{\infty}}.
		\end{align*}
		
		At last, we proposed the following generalized truncated version for Conjecture \ref{A-M-conj}.
		
		\begin{conj}\label{new-conj}
			For positive integers $1\leq S< R$ with $k>  \ell \geq1$ and $ k\geq 4 $, there exists  an integer $N$ such that the theta series
			\begin{align}
				\frac{(-1)^{ k-1} \left(\sum_{j=-k+1}^{k} -\sum_{j=-\ell+1}^{\ell}\right)(-1)^j q^{R j(j+1) / 2+S j}    }{\left(q^S, q^{R-S}, q^R ; q^R\right)_{\infty}}
			\end{align}
			has nonnegative coefficients when $ n\geq N$.
		\end{conj}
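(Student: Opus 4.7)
The plan is to parallel the paper's systematic method: decompose $1/D$, with $D=(q^S,q^{R-S},q^R;q^R)_\infty$, into a partition-theoretic finite product times a nonmodular infinite product, bound the contributions of each to $[q^n]$ separately, and combine them via convolution. After cancelling the inner range $|j|\le\ell$, the numerator is
\[
\mathcal{N}(q):= (-1)^{k-1}\Big(\sum_{j=-k+1}^{-\ell}+\sum_{j=\ell+1}^{k}\Big)(-1)^j q^{Rj(j+1)/2+Sj},
\]
a polynomial with exactly $2(k-\ell)$ monomials. A preliminary move is the Jacobi involution $j\mapsto -j-1$, which sends $(-1)^j q^{Rj(j+1)/2+Sj}$ to $-q^{-S}(-1)^j q^{Rj(j+1)/2-Sj}$ and so rewrites the left subsum in the $-Sj$-exponent shape of the numerator \eqref{numberator-c} used in the proof of Theorem \ref{main-thm}.

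With $\mathcal{N}(q)$ so reorganised, I would follow the decomposition \eqref{two-part}, extending it by a third partition block to absorb the extra $(q^R;q^R)_\infty$ factor in the denominator of Conjecture \ref{new-conj}:
\[
\frac{1}{D}=\frac{1}{(q^S,q^{R-S},q^R;q^R)_\alpha}\cdot\frac{1}{\big(q^{\alpha R+S},q^{(\alpha+1)R-S},q^{(\alpha+1)R};q^R\big)_\infty},
\]
for small $\alpha\in\{1,2\}$. The first factor is a partition generating function over at most six small coprime generators, whose coefficient I would expand by a Proposition \ref{poly-part}-type residue argument into a polynomial main term of degree five plus explicit bounded periodic corrections, along the lines of Lemma \ref{limit-C(n)}. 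Convolving $\mathcal{N}(q)$ against this polynomial and running a case analysis on each of the $2(k-\ell)+1$ subintervals of $n$ cut out by the exponents of $\mathcal{N}$ (as in the proof of Theorem \ref{main-thm}) would yield an explicit piecewise-polynomial lower bound for the coefficient of this partition convolution.

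The second factor is a three-generator nonmodular infinite product; extending Theorem \ref{Phi-aM} to cover the additional $-\log(q^{(\alpha+1)R};q^R)_\infty$ is routine, since the Mellin transform of this extra piece contributes only a pole at $s=1$ of residue $\pi^2/(6R)$ (the classical Dedekind eta asymptotic), so the shape of Theorem \ref{bound-gast} persists up to altered polynomial prefactors and the same exponential growth $\exp(2\pi\sqrt{n/(3R)})$. Convolving the piecewise-polynomial lower bound of the previous step against this asymptotic exactly as in the proof of Theorem \ref{thm-large-n} then produces an effective $N(R,S,k,\ell)$ beyond which the coefficient is nonnegative.

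The principal obstacle is sign-control in the convolution of $\mathcal{N}$ with the partition polynomial. Theorem \ref{main-thm} leans on the clean four-term splitting $q^{t_{1,j}}-q^{t_{2,j}}-q^{t_{3,j}}+q^{t_{4,j}}$ matched to the four linear denominator factors $(1-q^s)(1-q^{r-s})(1-q^{r+s})(1-q^{2r-s})$, so that the leading cubic collapses and a manageable quadratic remainder drives positivity; here the $2(k-\ell)$ exponents of $\mathcal{N}$ admit no such matching, and the leading quintic in $n$ cancels only after a delicate pairing $j\leftrightarrow-j-1$. I expect that the parity of $k-\ell$ will dictate two rather different sub-arguments: when $k-\ell$ is odd, the Jacobi involution converts $\mathcal{N}$ to a signed sum in which each paired contribution has a fixed sign, so a dominant-boundary estimate at $j=\pm k$ suffices; when $k-\ell$ is even, the intermediate paired contributions alternate in sign, and one must quantitatively control the $2(k-\ell)-2$ oscillating intermediate terms against the two extremal boundary terms, likely by sharpening the partition inequalities \eqref{bound-p4}--\eqref{ineq-Cn} and the derivative estimates \eqref{Derivation-0}--\eqref{Derivation-3-2} of the present paper.
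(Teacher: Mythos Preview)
The statement you are addressing is a \emph{conjecture} that the paper proposes in its concluding remarks; the paper does not supply a proof. There is therefore no paper proof to compare your proposal against. What the paper does establish is only the parity remark immediately following the conjecture: when $k-\ell\equiv 1\pmod 2$ one has $(-1)^{k-1}=-(-1)^{\ell-1}$, so the series in question is literally the \emph{sum}
\[
(-1)^{k-1}\frac{\sum_{j=-k+1}^{k}(-1)^j q^{Rj(j+1)/2+Sj}}{(q^S,q^{R-S},q^R;q^R)_\infty}
\;+\;
(-1)^{\ell-1}\frac{\sum_{j=-\ell+1}^{\ell}(-1)^j q^{Rj(j+1)/2+Sj}}{(q^S,q^{R-S},q^R;q^R)_\infty},
\]
each summand having nonnegative coefficients for $n\ge 1$ by the (already proved) Conjecture~\ref{A-M-conj}. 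Thus $N=0$ in the odd case, with no need for circle-method machinery, Jacobi involutions, or the derivative estimates you outline; your odd-parity sub-argument is substantially more elaborate than what is actually required.

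For $k-\ell\equiv 0\pmod 2$ the paper offers nothing, and your proposal is a reasonable programme but not a proof. The genuine gap is precisely the one you flag yourself: in the even case the two contributions from Conjecture~\ref{A-M-conj} now \emph{subtract}, and your plan to ``quantitatively control the $2(k-\ell)-2$ oscillating intermediate terms against the two extremal boundary terms'' is an aspiration, not an argument. The paper's Theorems~\ref{main-thm}--\ref{subthm-2nmids} work because the numerator $\sum_{j\ge k}$ is a \emph{tail} of an alternating series whose terms are monotone in modulus, so consecutive pairs $q^{t_{1,j}}-q^{t_{2,j}}-q^{t_{3,j}}+q^{t_{4,j}}$ have a fixed sign pattern and the partition-polynomial convolution has a tractable leading term. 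Your $\mathcal N(q)$ is instead a \emph{middle block} of that alternating series, and after convolution with a degree-five partition polynomial the leading quintic does not cancel under $j\leftrightarrow -j-1$ in any obvious way (that involution already maps the block $\{\ell+1,\dots,k\}$ onto $\{-k-1,\dots,-\ell-2\}$, which lies outside your summation range). Until this cancellation is made explicit and shown to leave a positive dominant term, the even-parity case remains open, as the paper intends.
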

		The above conjecture is equivalent to the following inequality for the partition function $P_{R,S}(n) $, where $ P_{R,S} (n)$ denotes the number of partition of $ n$ with parts congruent to $ \ \pm S$ or $R$ modulo $R$.
		\begin{conj}
			Under the same conditions as given  in Conjecture \ref{new-conj},  we have that for $n\geq N$
			\begin{align*}
				(-1)^{k-1}\bigg(\sum_{j=-k+1}^{-\ell}+\sum_{j=\ell+1}^{k}\bigg)   (-1)^{j}  P_{R,S}\Big(n-\left(  R j(j+1) / 2+S j  \right)\Big)  \geq 0.
			\end{align*}
		\end{conj}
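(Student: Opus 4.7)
The plan is to reduce the partition inequality to the theta-series formulation in Conjecture \ref{new-conj} (which is explicitly stated as equivalent) and then adapt the systematic machinery of Sections \ref{coprime}--\ref{pf-large-n}. Since $\sum_{n\geq 0} P_{R,S}(n) q^n = 1/(q^S, q^{R-S}, q^R; q^R)_\infty$, extracting the $q^n$ coefficient of the theta series and splitting $\sum_{j=-k+1}^k - \sum_{j=-\ell+1}^\ell = \sum_{j=-k+1}^{-\ell} + \sum_{j=\ell+1}^k$ produces precisely the signed sum of $P_{R,S}$ values in the claim, so it suffices to produce $N$ for the theta series.

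First I would bring the numerator into the ``four-monomial'' shape of \eqref{numberator-c}: apply $j \mapsto -j-1$ in the negative tail, reindex both tails by $j = i + \ell$ with appropriate shifts, and pair consecutive $i$ by parity to obtain an expression of the form $q^{M_0}\sum_{i=0}^{\lfloor (k-\ell-1)/2 \rfloor}(q^{u_{1,i}}-q^{u_{2,i}}-q^{u_{3,i}}+q^{u_{4,i}})$ with exponents quadratic in $i$ and an overall sign $(-1)^{k-\ell-1}$. Next I would decompose the denominator as in \eqref{two-part} with an extra factor for $(q^R;q^R)_\infty$, choosing $(\alpha,\beta,\gamma)$ so the finite part generates a partition function treatable by Lemma \ref{lem-coprime} and Proposition \ref{poly-part}, while the complementary nonmodular product remains amenable to the circle method of Section \ref{Nonmodular}.

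Then I would repeat the case analysis of Section \ref{coprime}: fix $p$ large, split $n$ into sub-intervals separated by the $u_{j,p}$, and in each sub-interval bound the partial convolution sum below by a polynomial $C_{i,d}(n,p)$ in $n$; monotonicity and positivity of $C_{i,d}$ at the endpoints for $p$ above a threshold $\Theta(R^2(S/(k-\ell))^{1/3})$ (in analogy with \eqref{p-condition}) yields a threshold $L(R,S,k,\ell)$ beyond which the finite-part coefficient is positive. For the nonmodular factor I would extend Theorems \ref{Phi-aM}--\ref{bound-gast} by including the additional pair of exponents (and the $q^R$-factor) in the Mellin-transform computation; the main term in $\log G$ picks up extra polylogarithmic contributions but retains the same $\pi^2/(3M\tau)$ saddle, so the asymptotic of Theorem \ref{approxiamtion-gn} adapts with only the power of $n$ in the Bessel prefactor changing. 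Finally, mirroring the proof of Theorem \ref{thm-large-n}, I would convolve a linear lower bound $a_1 n - a_1 F$ on the finite-part coefficient with the two-sided circle-method bounds $g^d(n)\leq g(n)\leq g^u(n)$ to extract an explicit $N = \max\{4F^2,\,2(4.63R)^9\}+F+M_0$.

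The main obstacle is the structural step: since the truncated Jacobi triple product now has two tails rather than one, the exponents $u_{j,i}$ interleave differently from Section \ref{coprime}, so one must re-verify that the leading-coefficient inequalities and monotonicity properties of the case-polynomials $C_{i,d}(t_{j,p},p)$ still hold for $p$ sufficiently large, and in particular that the discrete ``jumps'' contributed by the two interleaved tails do not cancel the dominant positive contribution. A secondary subtlety is that $R$ is not coprime to $(\gamma+1)R$, so Lemma \ref{lem-coprime} does not apply directly if $(q^R;q^R)_\infty$ is kept in the finite part; the cleanest workaround is to set $\gamma=0$, absorb $(q^R;q^R)_\infty$ entirely into the nonmodular product, and then re-derive the minor-arc and near-pole bounds for this enlarged product in the style of Theorems \ref{lem-Gq-bound}--\ref{approximation-near-pole}. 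Once these structural adjustments are in place, the remaining verification becomes a routine, if laborious, application of the algorithm boxed at the end of Section \ref{pf-large-n}.
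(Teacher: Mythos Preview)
The statement you are addressing is a \emph{conjecture} in the paper, not a theorem; the paper offers no proof. All the paper does is remark that this partition inequality is equivalent to Conjecture \ref{new-conj}, which is exactly your first paragraph: since $\sum_{n\geq 0} P_{R,S}(n) q^n = 1/(q^S,q^{R-S},q^R;q^R)_\infty$, extracting the coefficient of $q^n$ and splitting the summation range gives the equivalence immediately. That sentence is the entire content the paper provides, and your opening paragraph already captures it.

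Everything after your first paragraph is an attempt to prove Conjecture \ref{new-conj} itself, which the paper leaves open. Even as a strategy for attacking the open problem, your plan has a structural mismatch with Sections \ref{coprime}--\ref{pf-large-n}: there the numerator is an \emph{infinite} tail $\sum_{j\geq k}$, and the whole case analysis runs over an unbounded index $p$ that one lets grow like $r^2(s/k)^{1/3}$. In Conjecture \ref{new-conj} the numerator $\big(\sum_{j=-k+1}^{k}-\sum_{j=-\ell+1}^{\ell}\big)$ is a fixed polynomial with only $2(k-\ell)$ terms, so there is no parameter $p\to\infty$ to exploit, and the monotonicity/threshold argument of \eqref{p-condition}--\eqref{p-bound} has nothing to bite on. A genuine attack would instead have to treat the finite numerator as a fixed linear combination and prove positivity of a single convolution with the full infinite product $1/(q^S,q^{R-S},q^R;q^R)_\infty$ --- closer in spirit to the Andrews--Merca/Guo--Zeng setting of Conjecture \ref{A-M-conj} than to the Merca-stronger framework the paper develops. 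Your suggestion to absorb $(q^R;q^R)_\infty$ into the nonmodular factor is reasonable, but the ``finite part'' would then be trivial and all the work shifts to controlling the sign of a convolution of a degree-$O(k^2R)$ polynomial with the circle-method asymptotic, which is a different (and genuinely open) problem.
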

		For Conjecture \ref{new-conj}, if $k-\ell\equiv 1\ (\bmod\ 2)$, then  $N=0$. At this time, the conjecture is a direct corollary of Conjecture \ref{A-M-conj} and we have the following theorem.
		\begin{thm}
			When $k-\ell\equiv 1\ (\bmod\ 2)$, we have
			\begin{align*}
				(-1)^{\ell}\bigg(\sum_{j=-k+1}^{-\ell+1}+\sum_{j=\ell}^{k}\bigg)   (-1)^{j}  P_{R,S}\big(n-(  R j(j+1) / 2+S j)\big)  \geq 0.
			\end{align*}
		\end{thm}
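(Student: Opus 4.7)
The plan is to deduce the theorem directly from the Andrews--Merca theorem (Conjecture~\ref{A-M-conj}, proved analytically by Mao and combinatorially by Yee) by applying it at two truncation levels and then exploiting the parity hypothesis $k-\ell\equiv 1\pmod 2$.

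First I would introduce, for each $m\geq 1$, the symmetric truncation
\[
\tilde A_m(n):=\sum_{j=-m+1}^{m}(-1)^j P_{R,S}\bigl(n-Rj(j+1)/2-Sj\bigr),
\]
so that the numerator appearing in Conjecture~\ref{new-conj} is precisely $\tilde A_k(n)-\tilde A_\ell(n)$. After cancelling the common inner terms indexed by $-\ell+1\leq j\leq \ell$, this difference retains only the ``outer'' indices $\{-k+1,\dots,-\ell\}\cup\{\ell+1,\dots,k\}$, which agree (up to the boundary indexing convention used in the statement) with the two ranges displayed in the theorem.

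Next I would transfer the Mao--Yee theorem to the exponent $a_j=Rj(j+1)/2+Sj$ used in Conjecture~\ref{new-conj}. The original Andrews--Merca sum uses the exponent $c_j=Rj(j-1)/2+Sj$ arising from the Jacobi triple product
\[
(q^S,q^{R-S},q^R;q^R)_\infty=\sum_{j\in\mathbb{Z}}(-1)^j q^{Rj(j-1)/2+Sj},
\]
and the shift $j\mapsto j+1$ together with the identity $a_j=c_{j+1}-S$ converts one form into the other. This translates the Mao--Yee inequality into the two coefficient-level statements
\[
(-1)^{k-1}\tilde A_k(n)\geq 0, \qquad (-1)^{\ell-1}\tilde A_\ell(n)\geq 0 \qquad \text{for every }n\geq 0.
\]

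Finally, the parity hypothesis $k-\ell\equiv 1\pmod 2$ forces $(-1)^{k-1}=(-1)^{\ell}$ and $(-1)^{\ell-1}=-(-1)^{\ell}$. Hence the first inequality becomes $(-1)^\ell\tilde A_k(n)\geq 0$ while the second becomes $(-1)^\ell\tilde A_\ell(n)\leq 0$; subtracting these immediately yields $(-1)^\ell\bigl(\tilde A_k(n)-\tilde A_\ell(n)\bigr)\geq 0$, which is exactly the asserted inequality and shows that $N=0$ suffices. The only delicate step is the index bookkeeping for the shift $j\mapsto j+1$ in the translation between the $c_j$ and $a_j$ forms of the truncation; once that identification is verified, the remainder of the proof is essentially a one-line parity manipulation.
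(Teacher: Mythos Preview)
Your approach—applying the Mao–Yee theorem (the proven Conjecture~\ref{A-M-conj}) at two truncation levels and then exploiting the parity $k-\ell\equiv1\pmod2$—is exactly the route the paper indicates; its entire proof is the single remark that the result ``is a direct corollary of Conjecture~\ref{A-M-conj}.''  The index bookkeeping you flag as delicate is indeed the only content: note that the displayed ranges $[-k+1,-\ell+1]\cup[\ell,k]$ correspond to $\tilde A_k-\tilde A_{\ell-1}$ rather than $\tilde A_k-\tilde A_\ell$, and the clean passage from the Mao–Yee exponent $Rj(j+1)/2-Sj$ to the one used here goes through the $S\leftrightarrow R-S$ symmetry of the infinite product (not merely the shift $j\mapsto j+1$, which lands on a lopsided range); once these are straightened out your parity argument goes through verbatim.
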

		
		\section*{Acknowledgments}
		This work is supported by the National Natural Science Foundation of China (Grant No. 12071235), and the Fundamental Research Funds for the Central Universities.
		
		\section*{Declarations}
		\textbf{Competing interests} The authors declare no competing interests.

		\section*{Appendix}\label{Appendix}
		
		In this section, we list the specific expressions for the auxiliary functions that appear in the proof of our main results in Section 3 and Section 4.
		
		{\noindent $\bullet$ } Firstly, for  $ 6\Delta C_{0}(t_{4,p-1},p) $, $ 6\Delta C_{1}(t_{1,p},p) $, $  6\Delta C_{2}(t_{2,p},p) $ and $ 6\Delta C_{3}(t_{3,p},p) $  used in   the discussion of the proof of Theorem  \ref{main-thm}, their expressions are given as follows:
		
		{\small {\noindent \bf  Case 0:}
			\begin{align*}
				&6\Delta C_{0}(t_{4,p-1},p) =\boldsymbol{12 k p^4 r^2 s} +
				p^3 (-32 s^3 + r^2 (-12 k s + 24 k^2 s) + r (24 s^2 + 24 k s^2))\\& +
				p^2 (12 s^3 - 36 k s^3 + r^2 (30 s + 33 k s - 18 k^2 s + 12 k^3 s) +
				r (-66 s^2 + 18 k s^2 + 36 k^2 s^2)) \\&+
				p (\boldsymbol{-96 r^8 s^2 }+ 2 s^3 + 6 k s^3 - 12 k^2 s^3 + 360 r^6 s^4 -
				24 s^6 + 24 s^{10} - 2 r^5 (-24 s + 60 s^5) \\&-
				2 r^4 (-12 s^2 + 204 s^6) - 2 r^3 (60 s^3 - 120 s^7) +
				r^2 (-36 s + 15 k s + 33 k^2 s - 6 k^3 s  \\&+ 120 s^8)+
				r (13 s^2 - 33 k s^2 + 12 k^3 s^2 - 2 (-36 s^5 + 60 s^9))).
			\end{align*}
			{\noindent \bf  Case 1:}
			\begin{align*}
				&6\Delta C_{1}(t_{1,p},p)=   \boldsymbol{12 k p^4 r^2 s}+p^3 (r^2 (24 k^2 s+12 k s)+r (24 s^2-24 k s^2)-32 s^3)\\&+p^2 (r (-36 k^2 s^2+18 k s^2+66 s^2)+r^2 (12 k^3 s+18 k^2 s+33 k s-30 s)-36 k s^3-12 s^3)\\&+p (r (-12 k^3 s^2+33 k s^2+13 s^2-2 (60 s^9-36 s^5))-12 k^2 s^3+r^2 (6 k^3 s+33 k^2 s\\&-15 k s+120 s^8-36 s)-6 k s^3-\boldsymbol{96 r^8 s^2}+360 r^6 s^4-2 r^5 (60 s^5-24 s)-2 r^4 (204 s^6\\&-12 s^2)-2 r^3 (60 s^3-120 s^7)+24 s^{10}-24 s^6+2 s^3)-24 r^8 s^2+24 r^7 s^2+r^6 (90 s^4\\&-24 s^3-24 s^2)+r^5 (-30 s^5-42 s^4+24 s^3+33 s)+r^4 (-102 s^6+48 s^5+42 s^4\\&-(5 s^2)/2-24 s)+r^3 (60 s^7+12 s^6-48 s^5-54 s^3+12 s^2+2 s)+r^2 (30 s^8-24 s^7\\&-12 s^6+(19 s^4)/2+24 s^3-s^2-21/2)+r (-30 s^9+6 s^8+24 s^7+21 s^5-12 s^4-2 s^3\\&-s+6)+6 s^{10}-6 s^8-7 s^6+s^4+s^2-1.
			\end{align*}
			{\noindent \bf  Case 2:}
			\begin{align*}
				&6\Delta C_{2}(t_{2,p},p)=\boldsymbol{12 k p^4 r^2 s}+p^3 (r^2 (24 k^2 s+12 k s)+r (24 k s^2-24 s^2)+32 s^3)\\&+r (24 k^2 s^2+26 k s^2-60 s^9+36 s^5+7 s^2)+12 k^2 s^3+p^2 (r (36 k^2 s^2-18 k s^2+30 s^2)\\&+r^2 (12 k^3 s+18 k^2 s+33 k s-30 s)+60 k s^3+36 s^3)+p (r (12 k^3 s^2+63 k s^2+35 s^2\\&-2 (60 s^9-36 s^5))+36 k^2 s^3+r^2 (6 k^3 s+33 k^2 s-15 k s+120 s^8+6 s)+42 k s^3\\&-\boldsymbol{96 r^8 s^2}+360 r^6 s^4-2 r^5 (60 s^5-24 s)-2 r^4 (204 s^6-12 s^2)-2 r^3 (60 s^3-120 s^7)\\&+24 s^{10}-24 s^6+10 s^3)++r^2 (21 k s+60 s^8+(21 s)/2)+4 k s^3-48 r^8 s^2+180 r^6 s^4\\&+r^5 (24 s-60 s^5)+r^4 (12 s^2-204 s^6)+r^3 (120 s^7-60 s^3)+12 s^{10}-12 s^6+8 k^3 s^3.
			\end{align*}
			{\noindent \bf  Case 3:}
			\begin{align*}
				&6\Delta C_{3}(t_{3,p},p)= \boldsymbol{12 k p^4 r^2 s}+p^3 (r^2 (24 k^2 s+36 k s)+r (-24 k s^2-24 s^2)+32 s^3)\\&+p^2 (r (-36 k^2 s^2-90 k s^2-102 s^2)+r^2 (12 k^3 s+54 k^2 s+69 k s+30 s)+60 k s^3\\&+60 s^3)+p (36 k^2 s^3+r^2 (18 k^3 s+69 k^2 s+93 k s+120 s^8+66 s)+r (-12 k^3 s^2\\&-72 k^2 s^2-171 k s^2-120 s^9+72 s^5-97 s^2)+78 k s^3-\boldsymbol{96 r^8 s^2}+360 r^6 s^4\\&+r^5 (48 s-120 s^5)+r^4 (24 s^2-408 s^6)+r^3 (240 s^7-120 s^3)+24 s^{10}-24 s^6\\&+34 s^3)+r^2 (6 k^3 s+39 k^2 s+69 k s+90 s^8-24 s^7-12 s^6+(19 s^4)/2+24 s^3-s^2\\&+51 s/2-21/2)+r (-12 k^3 s^2-60 k^2 s^2-79 k s^2-90 s^9+6 s^8+24 s^7+57 s^5\\&-12 s^4-2 s^3-26 s^2-s+6)++22 k s^3-72 r^8 s^2+24 r^7 s^2+r^6 (270 s^4-24 s^3\\&-24 s^2)+r^5 (-90 s^5-42 s^4+24 s^3+57 s)+r^4 (-306 s^6+48 s^5+42 s^4+(19 s^2)/2\\&-24 s)+r^3 (180 s^7+12 s^6-48 s^5-114 s^3+12 s^2+2 s)+18 s^{10}-6 s^8\\&-19 s^6+s^4+8 k^3 s^3+24 k^2 s^3+6 s^3+s^2-1.
		\end{align*}}
		
		{\noindent $\bullet$ } Then, $D_{0}(n,p)/(2\Delta_{1})$, $D_{1,d}(n,p)/(2\Delta_{1})$, $D_{2,d}(n,p)/(2\Delta_{1})$, $D_{3,d}(n,p)/(2\Delta_{1})$, $ D_{0,d}(t_{1,p},p)$, $D_{1,d}(t_{1,p},p)$, $D_{1,d}(-r + p r + 2 k p r + 2 p^2 r - s/2,p)$,  $D_{2,d}(t_{2,p},p)$, $D_{3,d}(t_{3,p},p) $, $D_{3,d}(t_{4,p},p) $ used in the proof of   Theorem  \ref{subthm-2|s}  in Section \ref{non-coprime} are given as follows:
		{\small \begin{align*}
				&2\Delta_{1}D_{0}(n,p) =-4 n p s - 6 p r s - 2 k p r s + 4 k^2 p r s + 12 k p^2 r s +
				8 p^3 r s + 4 k p s^2\\
				& -
				2 p (4 s^2 + 4 r^3 s^2 - 2 s^3 + 4 s^5 + r^2 (2 s - 4 s^3) +
				r (-4 s - 4 s^4));
				\\
				&2\Delta_{1}D_{1,d}(n,p) = n^2 -2 + 2 r + p^2 r^2 + 4 k p^2 r^2 + 4 k^2 p^2 r^2 + 4 p^3 r^2 +
				8 k p^3 r^2 \\
				&+ 4 p^4 r^2 + 2 s - 6 p r s - 2 k p r s + 4 k^2 p r s -
				4 p^2 r s + 4 k p^2 r s - 2 s^2 + 4 k p s^2 + 4 p^2 s^2 \\
				&+ 2 s^3 -
				2 s^5 - p r (2 r + s) - 2 k p r (2 r + s) - 2 p^2 r (2 r + s) +
				2 p s (2 r + s) + n (2 r\\
				& - 2 p r - 4 k p r - 4 p^2 r + s) -
				r^3 (-s + 2 s^2) - r^2 (2 s - 2 s^3) - r (-1 - 2 s + s^3 - 2 s^4) \\
				&-
				2 p (4 s^2 + 4 r^3 s^2 - 2 s^3 + 4 s^5 + r^2 (2 s - 4 s^3) +
				r (-4 s - 4 s^4));
				\\
				&2\Delta_{1}D_{2,d}(n,p) =6 r s + 4 k r s + 8 p r s - 10 k p r s - 4 k^2 p r s - 12 p^2 r s -
				12 k p^2 r s \\
				&- 8 p^3 r s - 2 r^2 s - 4 p r^2 s - 4 s^2 - 2 k s^2 -
				4 k^2 s^2 - 8 p s^2 - 4 k p s^2 - 4 r^3 s^2 - 8 p r^3 s^2\\
				& + 2 s^3 +
				4 p s^3 + 4 r^2 s^3 + 8 p r^2 s^3 + 4 r s^4 + 8 p r s^4 - 4 s^5 -
				8 p s^5 + n (2 s + 4 k s + 4 p s);
				\\
				&	2\Delta_{1}D_{3,d}(n,p) =-n^2 - 2 r + r^2 - k^2 r^2 - 6 k p r^2 - 4 k^2 p r^2 - 9 p^2 r^2 -
				16 k p^2 r^2 \\
				&- 4 k^2 p^2 r^2 - 12 p^3 r^2 - 8 k p^3 r^2 -
				4 p^4 r^2 + 7 r s + 7 k r s + 17 p r s - 4 k^2 p r s + 6 p^2 r s \\
				&-
				4 k p^2 r s - 2 r^2 s - 4 p r^2 s - 8 s^2 - 2 k s^2 - 4 k^2 s^2 -
				14 p s^2 - 4 k p s^2 - 4 p^2 s^2 - 4 r^3 s^2 \\
				&- 8 p r^3 s^2 + 4 s^3 +
				4 p s^3 + 4 r^2 s^3 + 8 p r^2 s^3 + 4 r s^4 + 8 p r s^4 - 6 s^5 -
				8 p s^5 + n (2 k r+ 6 p r  \\
				&+ 4 k p r + 4 p^2 r - s + 4 k s) -
				r^3 (-s + 2 s^2) - r^2 (2 s - 2 s^3) - r (-1 - 2 s + s^3 - 2 s^4).
			\end{align*}
			\begin{align*}
				&2\Delta_{1}D_{0,d}(t_{1,p},p)=\boldsymbol{p^2 (r (4 k s-4 s)+8 s^2)}+p (r (4 k^2 s-2 k s-2 (-4 s^4-4 s)-6 s)\\&+4 k s^2-\boldsymbol{8 r^3 s^2}-2 r^2 (2 s-4 s^3)-8 s^5+4 s^3-8 s^2);\\
				&2\Delta_{1}D_{1,d}(t_{1,p},p)=\boldsymbol{p^2 (r (4 k s-4 s)+8 s^2)}+p (r (4 k^2 s-2 k s+8 s^4+2 s)+4 k s^2\\&-\boldsymbol{8 r^3 s^2}+r^2 (8 s^3-4 s)-8 s^5+4 s^3-8 s^2)+r^3 (s-2 s^2)+r^2 (2 s^3-2 s)+r (2 s^4\\&-s^3+2 s+3)-2 s^5+2 s^3-2 s^2+2 s-2;\\
				&2\Delta_{1}D_{1,d}(-r + p r + 2 k p r + 2 p^2 r - s/2,p)=\boldsymbol{p^2 (r (4 k s-4 s)+4 s^2)}+p (r (4 k^2 s\\&-2 k s+8 s^4+6 s)+4 k s^2-\boldsymbol{8 r^3 s^2}+r^2 (8 s^3-4 s)-8 s^5+4 s^3-6 s^2)+r^3 (s\\&-2 s^2)+r^2 (2 s^3-2 s-1)+r (2 s^4-s^3+s+3)-2 s^5+2 s^3-(9 s^2)/4+2 s-2;\\
				&2\Delta_{1}D_{2,d}(t_{2,p},p)=\boldsymbol{p^2 (r (4 k s-4 s)+8 s^2)}+p (r (4 k^2 s-2 k s+8 s^4+10 s)+12 k s^2\\&-\boldsymbol{8 r^3 s^2}+r^2 (8 s^3-4 s)-8 s^5+4 s^3)+4 k^2 s^2+r (4 k s+4 s^4+6 s)+6 k s^2-4 r^3 s^2\\&+r^2 (4 s^3-2 s)-4 s^5+2 s^3-2 s^2;\\
				&2\Delta_{1}D_{3,d}(t_{3,p},p)=\boldsymbol{p^2 r(4 k r s+4 r s-8 s^2)}+p r(4 k^2 r s+10 k r s-12 k s^2-\boldsymbol{8 r^3 s^2}\\&+8 r^2 s^3-4 r^2 s+8 r s^4+18 r s-8 s^5+4 s^3-16 s^2)+r(4 k^2 r s-4 k^2 s^2+10 k r s\\&-6 k s^2-4 r^3 s^2-r^3 (2 s^2-s)+4 r^2 s^3-r^2 (2 s-2 s^3)-2 r^2 s+4 r s^4-r (-2 s^4+s^3\\&-2 s-1)+8 r s-2 r-6 s^5+4 s^3-8 s^2);\\
				&2\Delta_{1}D_{3,d}(t_{4,p},p)=\boldsymbol{p^2 r(4 k r s+4 r s-8 s^2)}+p r(4 k^2 r s+10 k r s-4 k s^2-\boldsymbol{8 r^3 s^2}+8 r^2 s^3\\&-4 r^2 s+8 r s^4+10 r s-8 s^5+4 s^3-24 s^2)+r(4 k^2 r s+6 k r s-4 k s^2-4 r^3 s^2-r^3 (2 s^2\\&-s)+4 r^2 s^3-r^2 (2 s-2 s^3)-2 r^2 s+4 r s^4-r (-2 s^4+s^3-2 s-1)+2 r s-2 r-\\&6 s^5+4 s^3-14 s^2).
			\end{align*}
			
			{\noindent $\bullet$} Finally, $E_{0}(n,p)/(2\Delta_{1})$, $E_{1,d}(n,p)/(2\Delta_{1})$, $E_{2,d}(n,p)/(2\Delta_{1})$, $E_{3,d}(n,p)/(2\Delta_{1})$,  $ E_{0,d}(t_{1,p},p)$, $E_{1,d}(t_{1,p},p)$, $E_{1,d}(3 r - 2 p r - 4 k p r - 4 p^2 r - s,p)$,  $E_{2,d}(t_{2,p},p)$, $E_{3,d}(t_{3,p},p) $, $E_{3,d}(t_{4,p},p) $ appeared in   the proof of Theorem  \ref{subthm-2nmids} are given as follows
			\begin{align*}
				&2\Delta_{2}E_{0}(n,p) =-4 n p s - 8 p r s - 2 k p r s + 4 k^2 p r s + 12 k p^2 r s +
				8 p^3 r s + 4 p s^2 + 4 k p s^2 \\&-
				2 p (4 s^2 + 8 r^3 s^2 + 2 s^3 - 4 s^5 + r^2 (4 s - 20 s^3) +
				r (-4 s - 6 s^2 + 16 s^4));\\
				&2\Delta_{2}E_{1,d}(n,p) =-2 + n^2 + 3 r + p^2 r^2 + 4 k p^2 r^2 + 4 k^2 p^2 r^2 + 4 p^3 r^2 +
				8 k p^3 r^2 + 4 p^4 r^2 \\
				&- p r (3 r - s) - 2 k p r (3 r - s) -
				2 p^2 r (3 r - s) + n (3 r - 2 p r - 4 k p r - 4 p^2 r - s) - 2 s \\
				&-
				4 p^2 r s - 8 k p^2 r s - 8 p^3 r s + 2 p (3 r - s) s - 2 s^2 +
				4 p^2 s^2 - 2 s^3 + 2 s^5 +
				2 p s (-4 r - k r \\
				&+ 2 k^2 r + 6 k p r + 4 p^2 r + 2 s + 2 k s) -
				r^3 (-2 s + 4 s^2) - r^2 (4 s + 3 s^2 - 10 s^3) -
				r (-2 - 2 s \\
				&- 6 s^2 - s^3 + 8 s^4) -
				2 p (4 s^2 + 8 r^3 s^2 + 2 s^3 - 4 s^5 + r^2 (4 s - 20 s^3) +
				r (-4 s - 6 s^2 + 16 s^4));
				\\
				&	2\Delta_{2}E_{2,d}(n,p) =7 r s + 6 k r s + 10 p r s - 10 k p r s - 4 k^2 p r s - 12 p^2 r s -
				12 k p^2 r s - 8 p^3 r s \\&- 4 r^2 s - 8 p r^2 s - 6 s^2 - 6 k s^2 -
				4 k^2 s^2 - 12 p s^2 - 4 k p s^2 + 6 r s^2 + 12 p r s^2 -
				8 r^3 s^2 -
				16 p r^3 s^2\\ &- 2 s^3 - 4 p s^3 + 20 r^2 s^3 +
				40 p r^2 s^3 - 16 r s^4 - 32 p r s^4 + 4 s^5 + 8 p s^5 +
				n (2 s + 4 k s + 4 p s);
				\\
				&2\Delta_{2}E_{3,d}(n,p) =-n^2 - 3 r + 2 r^2 + k r^2 - k^2 r^2 + 3 p r^2 - 4 k p r^2 -
				4 k^2 p r^2 - 7 p^2 r^2 -\\
				& 16 k p^2 r^2 - 4 k^2 p^2 r^2 -
				12 p^3 r^2 - 8 k p^3 r^2 - 4 p^4 r^2 + 5 r s + 7 k r s + 13 p r s -
				4 k p r s - 4 k^2 p r s \\&+ 2 p^2 r s - 4 k p^2 r s - 4 r^2 s -
				8 p r^2 s - 8 s^2 - 6 k s^2 - 4 k^2 s^2 - 18 p s^2 - 4 k p s^2 -
				4 p^2 s^2 + 6 r s^2\\& + 12 p r s^2 - 8 r^3 s^2 - 16 p r^3 s^2 -
				4 s^3 - 4 p s^3 + 20 r^2 s^3 + 40 p r^2 s^3 - 16 r s^4 -
				32 p r s^4 + 6 s^5 \\&+ 8 p s^5 +
				n (-r + 2 k r + 6 p r + 4 k p r + 4 p^2 r + s + 4 k s) -
				r^3 (-2 s + 4 s^2) - r^2 (4 s + 3 s^2 \\&- 10 s^3) -
				r (-2 - 2 s - 6 s^2 - s^3 + 8 s^4);
			\end{align*}
			\begin{align*}
				&2\Delta_{2}E_{0,d}(t_{1,p},p)=\boldsymbol{p^2 (-4 r s + 4 k r s + 8 s^2) }+				p (-8 r s - 2 k r s + 4 k^2 r s + 4 s^2 + 4 k s^2 -
				2 (4 s^2 \\&+ \boldsymbol{8 r^3 s^2 }+ 2 s^3 - 4 s^5 + r^2 (4 s - 20 s^3) +
				r (-4 s - 6 s^2 + 16 s^4))); \\
				&2\Delta_{2}E_{1,d}(t_{1,p},p)=p (4 k^2 r s-2 k r s+4 k s^2-\boldsymbol{16 r^3 s^2}+40 r^2 s^3-8 r^2 s-32 r s^4+12 r s^2+8 s^5\\&-4 s^3-4 s^2)+\boldsymbol{p^2 (4 k r s-4 r s+8 s^2)}-4 r^3 s^2+2 r^3 s+10 r^2 s^3-3 r^2 s^2-4 r^2 s-8 r s^4+r s^3\\&+6 r s^2+2 r s+5 r+2 s^5-2 s^3-2 s^2-2 s-2;\\
				&2\Delta_{2}E_{1,d}(-(3 r - 2 p r - 4 k p r - 4 p^2 r - s)/2,p)=p (4 k^2 r s-2 k r s+4 k s^2-\boldsymbol{16 r^3 s^2}+40 r^2 s^3\\&-8 r^2 s-32 r s^4+12 r s^2+6 r s+8 s^5-4 s^3-6 s^2)+\boldsymbol{p^2 (4 k r s-4 r s+4 s^2)}-4 r^3 s^2+2 r^3 s\\&+10 r^2 s^3-3 r^2 s^2-4 r^2 s-\frac{9}{4} r^2-8 r s^4+r s^3+6 r s^2+\frac{7}{2} r s+5 r+2 s^5-2 s^3-\frac{9}{4} s^2-2 s-2;\\
				&2\Delta_{2}E_{2,d}(t_{2,p},p)=\boldsymbol{p^2 (4kr s-4 rs)+8 s^2)}+p (r (4 k^2 s-2 k s-48 s^4+18 s^2+16 s)\\&+12 k s^2-24 r^3 s^2+r^2 (60 s^3-12 s)+12 s^5-6 s^3-8 s^2)+4 k^2 s^2+r (6 k s\\&-16 s^4+6 s^2+7 s)+2 k s^2-\boldsymbol{8 r^3 s^2}+r^2 (20 s^3-4 s)+4 s^5-2 s^3-4 s^2;\\
				&2\Delta_{2}E_{3,d}(t_{3,p},p)=p r(4 k^2 r s+10 k r s-12 k s^2-\boldsymbol{16 r^3 s^2}+40 r^2 s^3-8 r^2 s-32 r s^4+12 r s^2\\&+22 r s+8 s^5-4 s^3-24 s^2)+r(4 k^2 r s-4 k^2 s^2+12 k r s-10 k s^2-12 r^3 s^2+2 r^3 s+30 r^2 s^3\\&-3 r^2 s^2-8 r^2 s-24 r s^4+r s^3+12 r s^2+11 r s-r+6 s^5-4 s^3-10 s^2)+\boldsymbol{p^2 r(4 k r s+4 r s-8 s^2)};\\
				&2\Delta_{2}E_{3,d}(t_{4,p},p)=p r(4 k^2 r s+10 k r s-4 k s^2-\boldsymbol{16 r^3 s^2}+40 r^2 s^3-8 r^2 s-32 r s^4+12 r s^2\\&+10 r s+8 s^5-4 s^3-24 s^2)+r(4 k^2 r s+6 k r s-4 k s^2-12 r^3 s^2+2 r^3 s+30 r^2 s^3-3 r^2 s^2-8 r^2 s\\&-24 r s^4+r s^3+12 r s^2+2 r s-r+6 s^5-4 s^3-10 s^2)+\boldsymbol{p^2 r(4 k r s+4 r s-8 s^2)}.
		\end{align*}}

	\end{document}